\documentclass[11pt,reqno]{amsart}
\usepackage{amssymb,amsfonts,amsthm,amscd,stmaryrd,dsfont,esint,upgreek,constants,todonotes}
\usepackage[mathcal]{euscript}
\usepackage[latin1]{inputenc}   
\usepackage[mathcal]{euscript}
\usepackage{graphicx,color}
\textheight=8.5truein
\textwidth=6.0truein
\hoffset=-.5truein
\voffset=-.5truein
\numberwithin{equation}{section}
\newcommand{\N}{\mathbb N}
\newcommand{\Z}{\mathbb Z}

\newcommand{\R}{\mathbb R}
\def\E{\mathbb E}
\def\P{\mathbb P}
\parindent 0pt

\newconstantfamily{C}{symbol=C}
\newconstantfamily{c}{symbol=c}
\newconstantfamily{O}{symbol=\Omega}

\newcommand{\T}{\mathbb{T}}

\numberwithin{equation}{section}
\newtheorem{thm}{Theorem}[section]
\newtheorem{lem}[thm]{Lemma}
\newtheorem{cor}[thm]{Corollary}

\theoremstyle{definition}


\def\smallnegint{\mathop{\int\mkern-13mu
        \raise.5ex\hbox{${\scriptscriptstyle\diagup}$}}\nolimits}
\def\ds{\displaystyle}

\def\ep{\varepsilon}

\def\F{{\mathcal F}}

\def\ssetminus{\,\raise.4ex\hbox{$\scriptstyle\setminus$}\,}
\def \lg{\langle}
\def \rg{\rangle}
\newcommand{\be}{\begin{equation}}
\newcommand{\ee}{\end{equation}}

\renewcommand{\bar}{\overline}
\renewcommand{\tilde}{\widetilde}
\renewcommand{\hat}{\widehat}





\begin{document}
\title[Perturbation problems in the theory of homogenization  of HJ equations]{Perturbation problems in homogenization of Hamilton-Jacobi equations}
\author[Pierre Cardaliaguet, Claude Le Bris  and Panagiotis E. Souganidis]
{Pierre Cardaliaguet, Claude Le Bris and Panagiotis E. Souganidis}
\address{Ceremade, Universit\'e Paris-Dauphine,
Place du Maréchal de Lattre de Tassigny, 75775 Paris cedex 16 - France}
\email{cardaliaguet@ceremade.dauphine.fr }
\address {Ecole des Ponts and Inria,
6 -8 avenue Blaise Pascal,  Cite Descartes, Champs-sur-Marne,      
77455 Marne La Vallee cedex 2 - France}
\email{lebris@cermics.enpc.fr}  
\address{Department of Mathematics, University of Chicago, Chicago, Illinois 60637, USA}
\email{souganidis@math.uchicago.edu}
\vskip-0.5in 
\thanks{\hskip-0.149in Cardaliaguet was partially supported by the ANR (Agence Nationale de la Recherche) project  ANR-12-BS01-0008-01. Souganidis was partially supported by the National Science
Foundation Grants DMS-1266383 and DMS-1600129. Part of the work was completed during Le Bris visits to the University of Chicago.}
\dedicatory{Version: \today}


\maketitle      


\begin{abstract}
\smallskip

This paper is concerned with the behavior of the ergodic constant associated with convex and superlinear Hamilton-Jacobi equation in a periodic environment which is perturbed  either by  medium with increasing period or by a random Bernoulli perturbation with small parameter.  We find a first order Taylor's expansion  for the ergodic constant which depends on the dimension $d$. When $d=1$ the first order term is non trivial, while for all $d\geq 2$ it is always $0$. Although such questions have been looked at in the context of linear 
uniformly elliptic homogenization, our results are the first of this kind in nonlinear settings. Our arguments, which   rely on  viscosity solutions  and the weak KAM theory, also raise several new and challenging questions.

\end{abstract}

\section{Introduction}

\noindent The paper is concerned with the behavior of the ergodic constant associated with convex and superlinear Hamilton-Jacobi (HJ for short) equations in a periodic environment which is perturbed  either by  medium with increasing period which is a multiple of the original one  or by a random Bernoulli perturbation with small parameter. We find  a first-order Taylor's expansion  for the ergodic constant which depends on the dimension $d$. When $d=1$ the first order term is non trivial, while for all $d\geq 2$ it is always $0$. Our results are the first 
of this kind for  nonlinear problems. The arguments, which rely on  viscosity solutions  and the weak KAM theory, also raise several new and challenging questions.
\smallskip

The motivation for this work came from the  recent studies by Anantharaman and Le Bris \cite{ALB1, ALB2} and Duerinckx and Gloria \cite{DG}, who considered similar questions for linear uniformly elliptic operators (and systems in \cite{DG}). The former paper considered Bernoulli perturbations of a periodic environment, while the  latter reference, which complemented and generalized the work of the former, considered Bernoulli perturbations of a stationary ergodic medium and provided, taking strong advantage of the linearity of the equation,  a full expansion.

\smallskip

Loosely speaking the aim of homogenization is to replace  a possibly complicated
heterogeneous medium  with a homogeneous environment that shares the same macroscopic
properties. In concrete models (equations) it allows to  eliminate  the fine scale up to an error which is
controlled by the size of fine scale as compared to the macroscopic size. 

\smallskip

From the modeling point of view, assuming that the medium is  periodic is a rather rigid and idealistic assumption and  somewhat remote from actual settings. Indeed, in view of the industrial process they are produced by, manufactured media, such as composite materials, can  be considered, under reasonable conditions, to  be periodic or at least ``approximately'' periodic. 
However, natural media, such as the subsoil, have no reason whatsoever to be periodic. Periodicity is then a mathematical idealization, or artifact, that might lead to inaccurate results.

\smallskip

A well established option is then to consider the medium to be random, and, more precisely, stationary ergodic. This  assumption conveniently makes up for the absence of periodicity, and, actually, includes periodicity as a particular case. The mathematical theory of random homogenization, both quantitative and qualitative, born in the early 1970s, has seen an enormous growth over the past fifteen years. However in spite of the appeal the theory, its application to actual media for real applications and, in particular, numerical simulations, remains a challenging issue. Random homogenization, and all approaches that derive from it, may indeed be computationally prohibitively expensive, even for the simplest possible equations arising, for instance, in the engineering sciences.  A compromise between the economical but idealistic periodic  and the more general but extremely costly random settings is to consider small random perturbations of periodic scenarios. The response of the medium in terms of this small perturbation, that is the modification of the homogenized limit in the presence of the small random perturbation,  is intuitively expected to be easier to evaluate. This was shown to be indeed true in the case of homogenization of linear elliptic equations in \cite{ALB1, ALB2}. A formal derivation of the first-order perturbation and numerical experiments performed there confirmed that it is possible, at a much reduced computational price, to approximate the homogenized limit of the random problem. As mentioned above, the approach has then been proven to be rigorous, and extended to all orders of perturbation, in a subsequent publication \cite{DG}.
\smallskip

In order to convey to the reader the flavor of the mathematical mechanism in action, we consider the following  simplistic setting, which can be thought as a computational model for the whole space $\R^d$. Let 
$F_{per}(x):=\sum_{k\in \Z^d} v(\cdot-k)$
be a $\Z^d$-periodic function that repeats itself within a presumably extremely large box of size $R$, and 
assume that a certain output,  $\mathcal S_{per}$, is computed from it. In the specific case addressed in~\cite{ALB1, ALB2}, $F_{per}$  and  $\mathcal S_{per}$ were respectively the matrix valued coefficient $A_{per}$ of the linear elliptic operator $-\hbox{\rm div} (A_{per}(./\varepsilon)\,\nabla)$ 
and the 
matrix $\overline A$ replacing $A_{per}$ in the homogenized limit. 
In this paper,  the function $F_{per}$ is the periodic Hamiltonian of a Hamilton-Jacobi equation and the outcome $\mathcal S_{per}$ is the homogenized Hamiltonian $\overline H$. Assume now that $F_{per}$ is
perturbed by the addition of  a random function of, for example,  the form $\zeta_\eta(x):=\sum_{k\in \Z^d} X_k \zeta(\cdot-k)$, where the $X_k$'s are Bernoulli random variables of a small parameter $\eta$, which are all independent from one another. 
Intuitively, at first order in $\eta$,  the perturbation experienced by  $F_{per}$ consists of adding exactly one  $\zeta$ at each possible location within the large box of size $R$. The probability of having two distinct non zero variables $X_k$  is of order $\eta^2$, a term negligible with respect to the first order  term in $\eta$. The perturbation of the outcome $\mathcal S$ with respect to the outcome $\mathcal S_{per}$ 
can therefore be calculated using only  the configurations of the  periodic medium perturbed in one random location.
Of course, the above argument is formal in many respects. For the rigorous result, we  must consider the whole infinite space $\R^d$ instead of a large box of finite size and need to prove the fact that all other configurations than those with exactly one non zero $X_k$ do not contribute to the asymptotics at first order. But the underlying idea remains.  This general discussion is made more precise below.

\smallskip

We describe next in a somewhat informal way the results of the paper. The actual statement need hypotheses which will be given in Section~2.  
\smallskip

Let $H:=H(p,x)$  be a Hamiltonian which is 
coercive in $p$ and $\Z^d-$periodic in $x$. It was shown by  Lions, Papanicolaou and Varadhan 
\cite{lions1987homogenization} that there exists
a unique $\overline H$, often referred to as the effective Hamiltonian or the ergodic constant,  such that the cell problem
\begin{equation}\label{cp}
H(D\chi,x)= \overline H \  {\rm in } \  \R^d,
\end{equation}
has a continuous, $\Z^d-$periodic (viscosity) solution $\chi$ known as a corrector. 
\smallskip

Correctors are obviously not unique. Throughout the paper, we make the normalization that $\chi(0)=0$. 
\smallskip

We recall that $\overline H$ is obtained 
as the uniform limit, as $\delta \to 0,$ of  $-\delta v^\delta$, where  $v^\delta$
is the unique periodic solution to the approximate cell problem 
\be\label{intro:approxcell}
\delta v^\delta+ H(Dv^\delta, x)=0  \  {\rm in } \  \R^d.
\ee

We consider two types of perturbations. The first is also periodic with increasingly large integer period. The second is  random (Bernoulli) with small intensity.  
\smallskip

%
%

In the first case the perturbed $R\Z^d$-periodic Hamiltonian  $H_R$,  with $R \in \N$,  is
\be\label{takisHR}
H_R(p,x):= H(p,x)-\zeta_R(x),
\ee
with  the $R$-periodic function $\zeta_R: \R^d \to \R$ defined as 
\be\label{takis8}
\zeta_R(x):= \sum_{k\in \Z^d} \zeta(x -Rk), 
\ee
where
\be\label{takiszeta}
 \zeta:\R^d\to \R \ \text{ is nonnegative, Lipschitz continuous and compactly supported.}
 \ee
In view of the form of  $\zeta_R$, we often refer to $\zeta(\cdot-Rk)$ as a ``bump" located at the point $Rk$. 
\smallskip

Let $\overline H_R$ be the ergodic constant associated with $H_R$. Then  there exists a continuous  $R\Z^d-$periodic solution $\chi_R$ of the  cell-problem
\be\label{cellpbHR}
H(D\chi_R,x)= \zeta_R(x)+ \overline H_R \  {\rm in } \ \R^d,
\ee
which is ``renormalized'' by  $\chi_R(0)=0$.
\smallskip

Since, as $R\to+\infty$, there are fewer bumps  in a given ball,  it is reasonable to expect that,  as $R\to+\infty$, $\overline H_R$ converges to $\overline H$.  Our goal is to obtain quantitative  information (rate, first term in the expansion) for this convergence.

%

\smallskip

In the second type of perturbation, the  randomly  perturbed Hamiltonian $H_\eta$ is given by
\be\label{takis60}
H_\eta(p,x):= H(p,x)-\zeta_\eta(x)
\ee
where 
\be\label{takis61}
\zeta_\eta(x):= \sum_{k\in \Z^d} \zeta(x-k)X_k,
\ee
with $\zeta$ satisfying  \eqref{takiszeta} and 
\be\label{takis100}
(X_k)_{k \in \Z^d}  \text{ a family of i.i.d. Bernoulli random variables of parameter $\eta$.} 
\ee

Contrary to the periodic setting, in random media  the effective Hamiltonian is not characterized by the cell-problem.   The reason is that to guarantee its uniqueness, it is necessary to have correctors which are strictly sub-linear at  infinity. As shown in Lions and Souganidis \cite{lions2003correctors},  in general, this is not possible.
\vskip.125in

The effective constant $\overline H_\eta$ is defined, for instance, through the discounted problem
$$
\delta v^{\eta,\delta}+ H_\eta(Dv^{\eta,\delta}, x)=0  \  {\rm in } \ \R^d
$$
which has  unique bounded solution $v^{\eta,\delta},$ as the almost sure limit (see Souganidis \cite{S})
$$
\overline H_\eta := \lim_{\delta\to 0} -\delta v^{\eta,\delta} (0). 
$$
Note that, as $\eta\to 0$, the probability that there is a bump in a fixed ball becomes smaller and smaller. So here again it is natural to expect that  $\overline H_\eta$ converges to $\overline H$  as $\eta\to 0$ and we want to understand at which rate this convergence holds. 
\smallskip

We establish two types of results. The first  is an estimate of the difference between $\overline H_R$ or $\overline H_\eta$ and $\overline H$, which  holds  even for more general  (almost periodic) perturbations. 
\smallskip

We prove that, if $H=H(p,x)$ is convex and coercive in $p$ and $\Z^d-$periodic in $x$, then there exists $C>0$ depending only on  $\zeta$ (see Corollary \ref{cor:1} and Corollary \ref{cor:1bis})  such that
\be\label{intro:barH-barHR}
0\leq \overline H-\overline H_R \leq CR^{-d} \  \text{for all} \  R\in \N,
\ee
\be\label{intro:barH-barHeta}
0\leq \overline H-\overline H_\eta \leq C\eta  \  \text{for all} \   \eta\in (0,1), 
\ee
and, in particular, 
\be\label{takis1}
\lim_{R\to \infty} \overline H_R=\overline H \ \text{ and} \  \lim_{\eta \to 0} \overline H_\eta=\overline H. 
\ee


The result is unusual in the homogenization of Hamilton-Jacobi equations because the perturbations do not vanish in the $L^\infty$-norm  and relies strongly on the fact that the bumps are nonnegative. In general the convergence does  not hold  otherwise; see Achdou and Tchou \cite{AcTc}, Lions \cite{Lcollege} and Lions and Souganidis~\cite{LS2017}, where we also refer  for more general statements about homogenization with fixed perturbations of periodic and random environments. 
\smallskip

We point out that \eqref{intro:barH-barHR}, \eqref{intro:barH-barHeta} and \eqref{takis1} are examples of more general statements which hold for general almost periodic or random perturbations; see Propositions \ref{prop:CValmostp} and \ref{prop:CVrandom}. 
\smallskip

In view of  \eqref{intro:barH-barHR}, \eqref{intro:barH-barHeta} and \eqref{takis1}, it is natural, and this is the second type of results in this paper, to identify the limits 
$$\lim_{R\to \infty}R^d(\overline H_R-\overline H) \ \text{and} \   \lim_{\eta \to 0} \eta^{-1}(\overline H_\eta-\overline H).$$ 
It  turns out that  is much more complicated  than proving \eqref{takis1}  and we only have a complete answer under some additional assumptions. 
\smallskip

In order to describe the results as well as to give a hint of the subtlety, we explain briefly and very informally the proof of  \eqref{intro:barH-barHR}.   Similar arguments justify \eqref{intro:barH-barHeta}.
\smallskip

We argue as if both $\chi$ and $\chi_R$ were smooth, which is not the case in general. We subtract   \eqref{cp} from \eqref{cellpbHR}, we linearize the difference around $D\chi$ assuming also that $H$ is smooth, and we use the convexity of $H$ to find
\be\label{takis3}
\overline H_R - \overline H\geq D_pH(D\chi, x)\cdot D(\chi_R -\chi) + \zeta_R,
\ee
where $D_pH$ denotes the gradient of $H$ with respect to $p$.
\smallskip

Let  $\tilde \sigma$ be the invariant measure associated with \eqref{cp}, which exists in view of the weak KAM theory  (see Fathi~\cite{Fa08}), that is,  $\tilde \sigma$ is a Borel probability measure in the unit cube $[-1/2,1/2)^d$
and 
$$
-{\rm div} \left(\tilde\sigma D_pH(D\chi,x)\right)=0.
$$
We extend  $\tilde \sigma$ by periodicity to $\R^d$ and we integrate both sides of \eqref{takis3} with respect to $\tilde \sigma$ over  $[-R/2,R/2)^d$. 
Using the fact that, for $R$ large enough, there is only the compactly supported bump $\zeta$ in the cube 
$[-R/2,R/2)^d$,  we find 
\be\label{takis4}
R^d(\overline H_R -\overline H) \geq - \int_{ \R^d} \zeta(x) d\tilde \sigma(x).
\ee
The last  inequality  not only justifies the right-hand side of \eqref{intro:barH-barHR}, but also  hints  that the limit of $R^d(\overline H_R-\overline H)$ should be $-\int_{\R^d}\zeta d\tilde \sigma$.  This  turns out to be  false. 
\smallskip

\smallskip
Indeed, under some assumptions on the minimizing Mather measure in the weak KAM formulation of \eqref{cp} which are stated informally below, we show in Theorem \ref{them:main}, that, when $d=1$,   
$$\lim_{R\to+\infty} R(\overline H_R-\overline H)=-(\int_{-1/2}^{1/2} \frac{1}{D_pH(\chi'(x), x)} dx)^{-1}\int_{\text{sppt}(\zeta)} \left(H^{-1}(\zeta(x) + \overline H, x) - H^{-1}(\overline H, x) \right)dx,$$
and, when  $d\geq2$, 
$$\lim_{R\to+\infty} R^d(\overline H_R-\overline H)=0.$$


For the proof we assume that the  invariant measure is unique, has a non vanishing rotational number and  its marginal $\tilde \sigma$ has a full support. The assumption on  $\tilde \sigma$  is strong and  holds only for specific classes of Hamiltonian. 
\smallskip

A schematic view of our strategy of proof goes as follows. Let $L$ be the convex dual of $H$ defined in \eqref{def:L}. The variational  interpretation of  \eqref{cellpbHR} and \eqref{cp}  implies that the respective correctors $\chi_R$ and $\chi$ satisfy the identities 
$$
\chi_R(x)=
\inf_{{\mathcal A}_x} \left[ \int_0^t (L(\dot \gamma(s), \gamma(s)) +\overline H_R+\zeta_R(\gamma(s))) \ ds + \chi_R(\gamma(t))\right],
$$
and 
$$\chi(x)=
\inf_{{\mathcal A}_x} \left[ \int_0^t (L(\dot \gamma(s), \gamma(s)) +\overline H) \ ds + \chi(\gamma(t))\right],$$
where ${\mathcal A}_x$ is the set of Lipschitz curves in $\gamma: [0,\infty) \to \R^d$ such that $\gamma(0)=x$. 
\smallskip

Let $\overline \gamma^x$ denote the optimal path in the expression for $\chi$, which exists in view of the assumptions on $H$. Then based on the equalities above, the difference of the two Hamiltonians $\overline H_R$ and $\overline H$ reads, for all $t>0$, as 
$$
\begin{array}{rl}
\ds t(\overline H_R-\overline H) \;= & \ds  -\inf_{{\mathcal A}_x} \left[ \int_0^t (L(\dot \gamma(s), \gamma(s))+\zeta_R(\gamma(s))) \ ds + \chi_R(\gamma(t))\right] +\chi_R(x)\\
& \qquad \ds + \int_0^t L(\dot{\bar \gamma}^x(s), \bar \gamma^x(s)) \ ds + \chi(\bar \gamma^x(t))- \chi(x).
\end{array}
$$
%
Identifying the limit of $R^d(\overline H_R-\overline H)$ therefore amounts to constructing a specific trajectory that almost minimizes the infimum problem in the right-hand side. Clearly,  that infimum is not achieved by  $\bar\gamma^x$, since the presence of  $\zeta_R$ has perturbed the original problem. However,
$\bar\gamma^x$ is expected to provide an accurate approximation of the infimum, at least far from the bumps. The actual proof consists in making this intuition precise and in understanding the behavior of the optimal trajectories near the bumps. 
\smallskip

The same strategy of proof applies to the  random perturbation. There it is necessary to construct an appropriate  random perturbation of the trajectory $\bar\gamma^x$. 
Most of the argument then aims at fixing all the necessary technicalities in the construction of that particular modified trajectory.

\smallskip
An intuitive way to explain the result  is 
that, when $d\geq 2$, the minimizers in \eqref{cellpbHR} eventually avoid the bump and stay close to those of \eqref{cp}, while, when $d=1$, they must pass through the bump. A similar interpretation can be used for the result in the random setting. 
\smallskip
 
The conclusion that, when $d\geq2$, $\overline H_R$ does not deviate much from $\overline H$ is in stark contrast with what is happening for uniformly elliptic divergence form operators where the first term in the expansion is nonzero. The heuristic explanation for this difference is that in the Hamilton-Jacobi setting information is propagated along curves which are lower dimensional objects when $d\geq2$, while  for  the elliptic problem  the information is obtained by averaging.
%
%
\smallskip

Next we describe some of the major ingredients in our analysis concentrating always for simplicity on 
the  periodic problem. An important fact is that, after a renormalization by additive constants, the correctors $\chi_R$ of the periodically perturbed cell-problem \eqref{cellpbHR} converge,  along subsequences as $R\to+\infty$ and locally uniformly in $\R^d$, 
 to  solutions $\chi_\infty$, which are no longer periodic,  of the equation 
\be\label{takis2}
H(D\chi_\infty,x)= \zeta(x)+\overline H \ {\rm in } \  \R^d;
\ee
the existence of such solutions was also proved by different methods in \cite{AcTc}, \cite{Lcollege} and \cite{LS2017}. 
\smallskip

The interesting  property of $\chi_\infty$ is that it keeps track of the perturbed problem, in the sense that, at least formally (see  Lemma \ref{lem:keykey} for a rigorous statement), 
\be\label{ineq:keykey}
0\geq \liminf_{R\to+\infty} R^d(\overline H_R-\overline H) \geq -\int_{\R^d} \lg D_pH(D\chi,x), D\chi_\infty-D\chi\rg d\tilde \sigma(x).
\ee
It follows from the invariance property of $\tilde \sigma$ that 
the right-hand side of \eqref{ineq:keykey} sees only the difference of $\chi_\infty-\chi$ at infinity. 
\smallskip

The analysis of $\chi_\infty$ is in itself a very intriguing problem. Using that $d\geq 2$ we prove in Corollary \ref{cor:cor} that there exists a constant $c$ such that $\chi_\infty$ is always above $\chi+c$,  coincides with $\chi+c$  outside of a ``cylinder",  and  tends to $\chi+c$ at infinity. This is enough to show that the right-hand side of \eqref{ineq:keykey} vanishes, which in turn proves that $R^d(\overline H_R-\overline H)$ tends to $0$.  The analysis when $d=1$ is based on a more direct argument.
The proof for the random perturbed problem relies on structure of $\chi_\infty$ as well. 
\smallskip

We continue with a rather brief summary of the history of the problem acknowledging that is really not possible to refer to all previous papers.  As already mentioned earlier the first homogenization result for Hamilton-Jacobi  equations in periodic environments was proved in \cite{lions1987homogenization}. Subsequent developments are due to Evans \cite{E1, E2} and, among others,  Majda and Souganidis \cite{MS}. The first result about the homogenization of Hamilton-Jacobi  equations in random media was obtained in Souganidis \cite{S} and  Rezakhanlou and Tarver \cite{RT}.
Other important contributions to the subject always in the context of the qualitative theory of homogenization for Hamilton-Jacobi equations are  Lions and Souganidis \cite{LS, LS3, lions2003correctors}, Armstrong and Souganidis \cite{AS1,AS2}, and Cardaliaguet and Souganidis \cite{CS1,CS2}.  Quantitative results, that is error estimates, were shown in Armstrong, Cardaliaguet and Souganidis \cite{ACS} and Armstrong and Cardaliaguet \cite{AC}.

\subsection*{Organization of the paper} The paper is organized as follows. In the next section we introduce the main assumptions and recall some well known facts from the weak KAM theory. In Section 3 we state and prove two general results  about the growth of the perturbations of the ergodic constant and make the connections with  \eqref{intro:barH-barHR} and \eqref{intro:barH-barHeta}. In Section 4 we introduce the assumptions and state and prove the asymptotic result for periodic perturbations, 
while in Section 5 we consider  random perturbations.

\subsection*{Notation and terminology} We work in $\R^d$ and we write $|x|$ for the Euclidean length of a vector $x\in \R^d$ and, for $x,y \in \R^d$, $\lg x,y\rg$ is the usual inner product.  The sets of integers and positive and nonnegative integers are respectively $\Z$, $\N$ and $\N_0$. If $k=(k_1,\ldots,k_d)\in \Z^d$, then $|k|_\infty:=\max_{1\leq i\leq d}|k_i|$. The cube centered at $x\in \R^d$ and of size $R>0$ is denoted by $Q_R(x):=x+[-R/2,R/2)^d$ and we set $Q_R:= Q_R(0)$ and $Q:=Q_1$ for simplicity. Given a finite subset $A \subset \Z^d$, $\sharp A$ denotes the number of elements of $A$.  Given a nonnegative measure $\mu$ and function $\zeta$, $\text{sppt}(\mu)$ and $\text{sppt}(\zeta)$ are  respectively their support. If $f:\R^d\to \R$ is bounded, $\text{osc} f:= \sup_{\R^d} f-\inf_{\R^d}$. If $f$ is integrable and $E\subset \R^d$ has a finite volume, we denote by $\fint_E f$ the average of $f$ on $E$, that is $\fint_E f= |E|^{-1}\int_E f$. For notational convenience, we write $A \lesssim B$,  if $A \leq CB \ \hbox{ for some } C>0.$  If $A \lesssim B$ and $B \lesssim A$, we write $A \approx B$. Given $f:[a,b] \to \R$,  $\left[f\right]_a^b:=f(b)-f(a),$ and, for all $k\in\N \cup\{\infty\}$, $C_c^k(\R^m)$ is the set of compactly supported $C^k$ real valued functions on $\R^m$.  Throughout the paper, $C$ is a constant that  may vary from line to line and depends on the Hamiltonian $H$ and the space dimension $d$, unless otherwise specified. All the Hamilton-Jacobi equations encountered in the text have to be understood in the sense of viscosity solutions \cite{CIL}. 

\subsection*{The random setting} We describe here the random setting that we use in the paper and introduce the necessary notation and terminology. 
\smallskip

The general setting is a probability space  $(\Omega,\F,\P)$ and  we write $\E[X]=\int_\Omega X(\omega)d\P(\omega)$ for the expectation of  a random variable $X\in L^1(\Omega;\R).$ 
We assume that the group $(\Z^d, +)$ acts on $\Omega$. We denote by $(\tau_k)_{k\in \Z^d}$ this action and assume that it is measure preserving,  that is, for all $k\in \Z^d$ and $A\in \F$, $\P[\tau_k A]=\P[A],$ and ergodic, that is, for any translation invariant $A\in \F$,  $\P[A]=0\  \text {or} \ 1.$
\smallskip

A process $F:\R^d \times \Omega \to \R$ is said to be $\Z^d$-stationary if, for all $k\in \Z^d$, $F(x+k, \omega)=F(x, \tau_k \omega)$ almost everywhere in $x$ and almost surely in $\omega.$

\smallskip
The ergodic theorem says that, if $F\in L^\infty(\R^d;L^1(\Omega))$ is stationary, then, as $N\to\infty$,
$$ \frac{1}{(2N+1)^d}\sum_{|k|_\infty \leq N}F(x,\tau_k\omega) \to \E(F(x,\cdot)) \ \text{for any} \ x\in \R^d \ \text{and \ almost surely in $\omega$.}$$
\smallskip

Finally we remark, although we will not be making use of this in the paper, that almost periodic functions can be thought as stationary functions in an appropriate probability space with continuous stationary and ergodic action (translation).


%

\section{The assumptions and some basic facts}

We introduce the assumptions  on the Hamiltonian $H$ and recall some basic facts from the weak KAM theory, for which we refer to \cite{Fa08}. We discuss the one-dimensional setting in detail as well as the existence and properties of the ``corrector'' of the perturbed problem in the whole space.
\smallskip

The motivation for this presentation is to have all assumptions and their immediate consequences in one place. 
We recommend, however, that the reader skips this section the first time and returns to it as is necessary while going through the other parts of the paper. 




\subsection*{Assumptions on the Hamiltonian}
We assume that $H\in C^2(\R^d\times \R^d; \R) $ is 
\begin{equation}\label{takis5}
\begin{cases}
\text{$\Z^d-$periodic in the second variable, that is}\\[1mm] 
 H(p,x+k)= H(p,x)  \ \text{for all $p \in \R^d,  x\in \R^d \ \text{and } \ k\in \Z^d$,}
\end{cases}
\end{equation}
and 
\begin{equation}\label{takis6}
\begin{cases}
\text{strictly convex and super-linear with respect to the first one, that is}\\[1mm] 
\hskip.25in D^2_{pp}H>0 \ \text{and}  \lim_{|p|\to +\infty} |p|^{-1}H(p,x)=+\infty \ \text{uniformly in $x$}.
\end{cases}
\end{equation}
\subsection*{Facts from the weak KAM theory}
Let $L$ be  the Lagrangian associated with $H$ which, for all $(\alpha,x)\in \R^d\times \R^d$, is given by 
\be\label{def:L}
L(\alpha, x):= \sup_{p\in \R^d} \left\{ -\lg p,\alpha\rg- H(p,x)\right\}. 
\ee
We recall from the introduction  that  $\overline H$ is the effective (ergodic) constant associated with $H$, that is $\overline H$ is the unique constant such that the cell problem \eqref{cp} has a 
$\Z^d-$periodic, continuous solution $\chi$. Note that the  coercivity of $H$ yields that $\chi$ is Lipschitz continuous.

\smallskip

The weak KAM theory provides an alternative characterization for $\overline H$, namely 
\be\label{infbarH}
-\overline H= \inf_\mu \int_{\R^d\times Q} L(\alpha,x)d\mu(\alpha,x),
\ee
where the infimum is taken over the Radon  measures $\mu$ on $\R^d \times \R^d$ which are $\Z^d-$periodic in $x$, have  weight $1$ on $\R^d\times Q$ and are closed, that is, for all  $\Z^d$-periodic  $\phi\in C^\infty(\R^d),$
$$
\int_{\R^d\times Q} \lg D \phi(x),\alpha \rg d\mu(\alpha,x)=0. 
$$
Throughout the text, we often write  $\tilde \mu$ and  $\tilde \sigma$ for an optimal measure in the minimization problem  \eqref{infbarH} and its marginal with respect to $x$ respectively. In the context of  the weak KAM theory such  $\tilde \mu$ and  $\tilde \sigma$ are called respectively a minimizing Mather measure and  a  projected minimizing measure. Note that the restriction of $\tilde \sigma$ to $Q$ is a probability measure. 
\vskip.075in

We use the following well known facts from the weak KAM theory (see \cite{Fa08}):  
\[ \text{any corrector $\chi$  is $\tilde \sigma-$a.e. differentiable,}\]
\[\text{$\tilde \mu$ is  the image of the measure $\tilde \sigma$ by the map $x\to \left( D_pH(D\chi(x),x),x\right)$,}\]
and
\[\text{$\tilde \sigma$ is an invariant measure for the flow generated by the vector field $x\mapsto -D_pH(D\chi(x),x)$,}\]
that is
\be\label{takisinvariant}
{\rm div}\left( \tilde \sigma D_pH(D\chi,x)\right)=0 \ \text{in the sense of distributions in } \ \R^d.
\ee
\subsection*{An assumption on the Mather measure and its consequences}  In order to prove the asymptotic results and, in particular, the existence of the limits discussed in the Introduction, we need to further assume  that 
\be\label{takisadditional}
\begin{cases}
\text {the projected Mather measure $\tilde \sigma$ associated with $H$ is unique,}\\[1mm]
\text{has a nonzero rotation number $e$  and  full support in $\R^d$.}
\end{cases}
\ee 
The assumption of the full support in $\R^d$ is written as 
$$
{\rm sppt}(\tilde \sigma)=\R^d,
$$
and the nonzero rotation number $e$ is given by 
\be\label{hyp:e}
e:=\int_Q -D_pH(D\chi(x),x)d\tilde \sigma(x)\neq 0.
\ee
The first two conditions in \eqref{takisadditional}, that is the uniqueness of $\tilde \sigma$ and existence of a nonzero rotation number, are rather mild. For example, if $H(p,x)= \tilde H(p+\bar p, x)$ for some Hamiltonian $\tilde H$ satisfying \eqref{takis5} and \eqref{takis6} and some $\bar p\in \R^d$, then  $\tilde \sigma$ is unique  for a ``generic'' $\bar p$ and the nonzero   rotation number exists for $\bar p$ large enough; see \cite{Fa08}. That the projected Mather measure has full support in $\R^d$  is a much stronger assumption and only holds under restrictive structure conditions. 
\smallskip

We continue listing several consequences of \eqref{takisadditional} that are used in the rest of the paper. We refer to \cite{Fa08} and references therein for the proofs. 
\smallskip

Since the projected Mather measure has a full support, 
\be\label{taxisaubry}
\text{  the projected Aubry set is $\R^d$.}
\ee 
It then follows that 
\be\label{takisregularity}
\text{ any corrector $\chi$ is of class $C^{1}$, and, thus,  also $C^{1,1}.$}
\ee
The strict convexity of the Hamiltonian also implies that 
\be\label{takisuniqueness}
\text{the correctors are unique up to an additive constant.}  
\ee
Indeed, if $\chi$ and $\tilde \chi$ are two correctors, subtracting their respective equations and using the strict convexity we find, for some $C>0$, 
$$
0= H(D\tilde \chi,x)-H(D\chi,x) \geq \lg D_pH(D\chi,x), D(\tilde \chi-\chi)\rg + C |D(\tilde \chi-\chi)|^2. 
$$
Multiplying  the inequality above by $\tilde \sigma$, integrating over $Q$  with respect to $\tilde \sigma$ and integrating  by parts using the periodicity and the fact that   $\tilde \sigma$ is an invariant measure, that is \eqref{takisinvariant} holds,  we obtain
$$
0\geq \int_Q |D(\tilde \chi-\chi)|^2d\tilde \sigma(x). 
$$
Thus the continuous maps $D\tilde \chi$ and $D\chi$ agree on a dense subset of $\R^d$ and therefore everywhere. 
\smallskip

In view of \eqref{takisregularity}, we  can define the flow $\bar \gamma^x:\R\to \R^d$ of optimal trajectories for any initial position $x\in \R^d$ by  
$$
\dot{\bar \gamma}^x(t) = - D_pH(D\chi(\bar \gamma^x(t)), \bar \gamma^x(t) ) \ \text{ for } \ t\in \R  \ \text{ and}  \ \bar \gamma^x(0)=x;
$$
we note that the map $x\mapsto \bar \gamma^x(t)$ is continuous for any $t$. 
\smallskip

We recall now that the optimality of $\bar \gamma^x$ implies that, for all $t\geq 0$,  
\be\label{takisoptimal}
\chi(x)=\int_0^t (L(\dot{\bar \gamma^x}(s), \bar \gamma^x(s))+ \overline H) ds + \chi(\bar \gamma^x(t)).
\ee

The uniqueness of the projected Mather measure implies that it is actually ergodic. As a result, 
for $\tilde \sigma-$ a.e. $x\in \R^d$,   we have 
\be\label{RotNumber}
\lim_{t\to \pm \infty} \frac{  \bar \gamma^x(t)}{t} = \int_{\T^d} -D_pH(D\chi(x),x) d\tilde \sigma(x)=e.
\ee
As a matter of fact we will see below (Lemma \ref{lem:avoid}), that, as  a consequence of the uniqueness of $\tilde \sigma$,  \eqref{RotNumber}  actually holds  for all $x\in \R^d$.
\smallskip

We present now a  simple example satisfying \eqref{takis5}, \eqref{takis6} and \eqref{takisadditional}. Let $H(p,x)= |p+\bar p|^2$ for some non rational direction $\bar p\in \R^d$. In this  case, we have $\chi=0$ and $\bar \gamma^x(t)= x+ t\bar p$. The unique invariant measure is $\tilde \sigma=1$ and $e=-2\bar p$. 
\smallskip

The KAM theory then  implies that \eqref{takisadditional} holds true for $H(p,x):= |p+\bar p|^2-V(x)$ with  $\bar p$ a Diophantine vector and $V:\R^d\to\R$ periodic, smooth and small enough. Obviously, \eqref{takis5} and \eqref{takis6} are satisfied.  
\subsection*{The one-dimensional setting when \eqref{takis5}, \eqref{takis6} and \eqref{takisadditional} hold} 
We know from \eqref{takisadditional} that the cell problem \eqref{cp} has a $\Z$-periodic solution $\chi \in C^{1,1}(\R)$  and $\int _Q \chi'(x) dx=0.$
\smallskip

The strict convexity of $H$ implies that  the inverse $H^{-1}(\cdot, x)$ of $H(\cdot,x)$ has two branches $H_\pm^{-1}(\cdot, x)$ as long as one is away from its minimum, which is the case  in view  of \eqref{takisadditional}. Since the corrector is smooth, $\chi'(x)$ must be, for all $x$, in the same branch of $H^{-1}(\cdot, x)$  and we can rewrite \eqref{cp} as an the ode
\be\label{takisode}
\chi'(x)=H^{-1}(\overline H, x),
\ee
using only one of them.  The choice of the branch, which from now we denote as $H^{-1}(\cdot, x)$,  is dictated by $\int _Q \chi'(x) dx=0.$  

%
%
\smallskip

In view of the above discussion, in any $Q_R$ with $R\in \Z$, we have 
\begin{equation}\label{takis41}
\int_{Q_R}H^{-1}(\overline H, x) dx=0.
\ee
It also follows from \eqref{takisadditional} and \eqref{takisinvariant} that the invariant measure $\tilde \sigma$ associated with the cell problem at hand has  $\Z-$periodic extension in $\R$  with density 
\be\label{takis42}
\tilde \sigma (x)= (\int_{Q} \frac{1}{D_pH(\chi'(y), y)} dy)^{-1}  \frac{1}{D_pH(\chi'(x), x)};
\ee
note that for notational simplicity we often identify the invariant measure with its density, 

\smallskip
Let $D_r H^{-1}(\cdot, x)$ denote the derivative of $r \mapsto H^{-1}(r, x) $ with respect to the first argument. It follows from 
\eqref{takisode} that 
\be\label{takis42ter}
D_r H^{-1}(\overline H, x) = \frac{1}{D_pH(\chi'(x), x)},
\ee
and, in view of  \eqref{takis42}, 
\be\label{takis42bis}
D_r H^{-1}(\overline H, x)=\left(\int_{Q} \frac{1}{D_pH(\chi'(y), y)} dy\right) \  \tilde \sigma(x).
\ee
\smallskip

We conclude with the following classical example always for $d=1$. The Hamiltonian is $H(p,x)= |p + \bar p|^2 - V(x)$ for some fixed $\bar p \in \R$ and a  $\Z$-periodic potential $V$ with $\min_{x\in Q}V(x)=0$. It is well known
that, if $|\bar p| \geq \int_Q \sqrt {V(x)} dx$, then the cell problem 
$$|\chi_x + \bar p|^2 =V(x) +\overline H \ \text{in} \ \R,$$
has a smooth $\Z-$periodic solution for $\overline H$ given by $|\bar p| = \int_Q \sqrt{V(x) +\overline H}$. This last expression and the sign of $\bar p$ identify the branch of the $\sqrt {\cdot}$ that we need to choose.
\subsection*{A corrector $\chi_\infty$ of the perturbed problem in the whole space}  
An important ingredient in our analysis is the construction of a ``perturbed corrector'' $\chi_\infty$, that is a solution to \eqref{takis2}, which, as it turns out (see  Lemma \ref{lem:keykey}), 
keeps track of the difference between $\overline H_R$ and $\overline H$ as $R\to \infty$.

%
\smallskip

The first step in finding  $\chi_\infty$ is to obtain  independent of $R$ sup- and Lipschitz bounds for the $R\Z^d-$ periodic solutions $\chi_R$ to \eqref{cellpbHR}; recall that we always consider $R\in \N$.
%
\begin{lem} Assume \eqref{takis5}, \eqref{takis6} and \eqref{takiszeta}. There exist solutions  $\chi_R$ of the perturbed cell problem \eqref{cellpbHR}
such that
$$
\|\chi_R\|_\infty+\|D\chi_R\|_\infty \lesssim 1.
$$
\end{lem}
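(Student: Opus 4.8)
The plan is to produce the correctors $\chi_R$ together with their bounds in two steps, exploiting the coercivity of $H$ and the nonnegativity and boundedness of $\zeta_R$. First I would fix the ergodic constant: since $H_R(p,x)=H(p,x)-\zeta_R(x)$ satisfies \eqref{takis5} (with $R\Z^d$ in place of $\Z^d$) and \eqref{takis6}, the theorem of Lions--Papanicolaou--Varadhan \cite{lions1987homogenization} gives a unique $\overline H_R$ and a continuous $R\Z^d$-periodic viscosity solution $\chi_R$ of \eqref{cellpbHR}, normalized by $\chi_R(0)=0$. The coercivity of $H$ then yields a gradient bound: from $H(D\chi_R,x)=\zeta_R(x)+\overline H_R$ and the fact that $0\le\zeta_R\le\|\zeta\|_\infty$, one gets $H(D\chi_R,x)\le \|\zeta\|_\infty+\overline H_R$ a.e., and since $H(p,x)\to+\infty$ as $|p|\to\infty$ uniformly in $x$, this forces $\|D\chi_R\|_\infty\le C$ provided $\overline H_R$ is bounded above independently of $R$.

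So the crux is a uniform bound on $\overline H_R$, and here the key input is the comparison $\overline H-CR^{-d}\le\overline H_R\le\overline H$ alluded to in the introduction (equation \eqref{intro:barH-barHR}), which is itself proved in Section~3 and which I may invoke. The lower bound $\overline H_R\le\overline H$ is the easy direction: since $\zeta_R\ge0$, any corrector $\chi$ of the unperturbed problem is a subsolution of \eqref{cellpbHR} with constant $\overline H$ (because $H(D\chi,x)=\overline H\le\zeta_R(x)+\overline H$), and comparison for the ergodic problem forces $\overline H_R\le\overline H$; combined with the trivial lower bound $\overline H_R\ge\overline H-\|\zeta\|_\infty$ this already gives $|\overline H_R|\le C$ uniformly in $R$. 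Feeding this back into the coercivity estimate of the previous paragraph gives $\|D\chi_R\|_\infty\lesssim1$.

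It remains to bound $\|\chi_R\|_\infty$. Since $\chi_R$ is $R\Z^d$-periodic and normalized by $\chi_R(0)=0$, a crude bound would be $\|\chi_R\|_\infty\le \frac{\sqrt d}{2}R\,\|D\chi_R\|_\infty$, which is not uniform in $R$; this is the main obstacle. The fix is to work along the optimal trajectories of the \emph{unperturbed} flow $\bar\gamma^x$ and use the structure of the problem: the variational representation of $\chi_R$ gives, for every Lipschitz curve $\gamma$ with $\gamma(0)=x$ and every $t>0$,
\be\label{prop:repchiR}
\chi_R(x)\le \int_0^t\big(L(\dot\gamma(s),\gamma(s))+\overline H_R+\zeta_R(\gamma(s))\big)\,ds+\chi_R(\gamma(t)),
\ee
and one obtains a reverse inequality along the optimal perturbed trajectory. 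Choosing $\gamma=\bar\gamma^x$ and using that the bumps $\zeta_R$ are encountered only on a set of times of density $O(R^{-d})$ along any trajectory of bounded speed, one controls $\chi_R(x)-\chi_R(\bar\gamma^x(t))$ by $C$, uniformly in $x,t,R$; since the unperturbed flow is (by the coercivity/periodicity structure, or directly by recurrence) such that $\bar\gamma^x(t)$ returns near $0$ modulo $R\Z^d$, and using periodicity of $\chi_R$, one transfers the normalization $\chi_R(0)=0$ to get $|\chi_R(x)|\le C$ for all $x$. Alternatively, and perhaps more cleanly, one can bound $\mathrm{osc}\,\chi_R$ directly: two points $x,y\in Q_R$ can be joined by a path of length $\lesssim R$, but the representation \eqref{prop:repchiR} together with $\overline H_R\le\overline H$ and the fact that $\chi$ itself has bounded oscillation on $Q_R$ only up to $O(R)$ forces one instead to argue via the perturbed corrector $\chi_\infty$ of \eqref{takis2}; since uniform bounds on $\chi_R$ are exactly what is needed to \emph{extract} $\chi_\infty$, the honest route is the trajectory argument above. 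I expect the delicate point to be precisely this uniform sup-bound, where one must quantify how rarely an optimal trajectory meets the support of $\zeta_R$; everything else reduces to coercivity and comparison.
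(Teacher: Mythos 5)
Your gradient bound is fine and is in fact a slightly more explicit version of what the paper does (the paper merely says the gradient bound ``follows immediately from the coercivity''; you correctly note that the needed uniform bound on $\overline H_R$ comes from the two-sided comparison $\overline H-\|\zeta\|_\infty\le\overline H_R\le\overline H$, which uses $\zeta_R\ge 0$). The problem is the sup bound, which you yourself flag as the crux, and where your sketch has a genuine gap.

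First, the hypotheses of this lemma are only \eqref{takis5}, \eqref{takis6} and \eqref{takiszeta}; assumption \eqref{takisadditional} is \emph{not} in force, so the corrector $\chi$ is in general only Lipschitz, there is no well-defined smooth flow $\bar\gamma^x$, and the weak KAM machinery you lean on (optimal trajectories of the ``unperturbed flow'', recurrence of $\bar\gamma^x(t)$ near $0$ modulo $R\Z^d$) is simply not available at this stage of the paper. Second, even granting a flow, the claim that along any trajectory of bounded speed the bumps $\zeta_R$ are met on a set of times of density $O(R^{-d})$ is not justified and is not obviously true (and is in any case the kind of refined dynamical fact the whole rest of the paper is devoted to extracting); and the recurrence modulo $R\Z^d$ is a nontrivial ergodic statement. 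Third, as you yourself note, invoking $\chi_\infty$ is circular since the uniform bound on $\chi_R$ is precisely what is used to produce $\chi_\infty$. So the sup bound in your proposal does not go through as written.

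The paper's route is different and more elementary: it works with the discounted approximations $v^\delta$ (for $H$) and $v^\delta_R$ (for $H_R$). The key structural observation — the same nonnegativity of $\zeta_R$ you use, but exploited differently — is that $v^\delta_R$ solves the \emph{unperturbed} discounted equation on $\R^d\setminus\operatorname{sppt}(\zeta_R)$, hence $v^\delta_R-\ep$ is a strict subsolution there. A viscosity comparison then forces any positive maximum of $v^\delta_R-\ep-v^\delta$ to sit inside $\operatorname{sppt}(\zeta_R)$, and by $R\Z^d$-periodicity of $v^\delta_R-v^\delta$ one may place it inside the single bump $\operatorname{sppt}(\zeta)$; the same goes for the minimum. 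This localizes $\operatorname{osc}(v^\delta_R-v^\delta)$ to a set of diameter $\operatorname{diam}(\operatorname{sppt}(\zeta))$, independent of $R$ and $\delta$, where the uniform Lipschitz bounds control it. Since $v^\delta$ itself has oscillation bounded uniformly in $\delta$ (it is $\Z^d$-periodic and uniformly Lipschitz), $\operatorname{osc}(v^\delta_R)$ is bounded uniformly in $R,\delta$, and letting $\delta\to 0$ along a subsequence produces $\chi_R:=\lim(v^{\delta_n}_R-v^{\delta_n}_R(0))$ with the desired bounds. No trajectories, no flow, no Mather measures — just a maximum principle localized by the support of the bump.
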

\begin{proof} The gradient bound follows immediately  from the coercivity of $H$ and  holds for any solution of the cell problem. For the $L^\infty$-bound, we consider the approximate cell problems \eqref{intro:approxcell} and 
\be\label{ApproxCellPbR}
\delta v^\delta_R +H(Dv^\delta_R, x)=\zeta_R \ {\rm in } \ \R^d,
\ee
which are  respectively  $\Z^d$ and $R\Z^d$ periodic. Since, for any $\ep>0$, $v^\delta_R-\ep$ is a  strict subsolution to  \eqref{intro:approxcell} in $\R^d\backslash {\rm sppt}(\zeta_R)$, the maximum of $v^\delta_R-\ep-v^\delta$, if positive, can only be reached at some $x_\ep \in {\rm sppt}(\zeta_R)$ and, in view of the periodicity, we may assume that $x_\ep\in {\rm sppt}(\zeta)$. Similarly, the minimum of $v^\delta_R-\ep-v^\delta$, if negative, is reached at a point $y_\ep \in {\rm sppt}(\zeta)$. 
\smallskip

Thus 
\be\label{takis50}
{\rm osc}_{\R^d}(v^\delta_R-v^\delta) \leq {\rm osc}_{{\rm sppt}(\zeta)}(v^\delta_R-v^\delta) +2\ep \leq \|D(v^\delta_R-v^\delta)\|_\infty {\rm diam}({\rm sppt}(\zeta))+\ep.
\ee
Recall that the $v^\delta$'s are $\Z^d-$periodic. Moreover, in view of the assumed coercivity and bounds on $H$, the  $v^\delta$'s are Lipschitz continuous uniformly  in $\delta$.  Hence their oscillations are  bounded uniformly in $\delta$. 
\smallskip

It follows from \eqref{takis50} that  the oscillation of $v^\delta_R$ is also bounded, uniformly with respect to $R$ and $\delta$. Thus, we can extract a subsequence $\delta_n\to0$ such that $v_R^{\delta_n}-v_R^{\delta_n}(0)$ converge uniformly in $\R^d$ to a solution $\chi_R$ of the perturbed cell problem \eqref{cellpbHR} satisfying the uniform  $L^\infty$ and Lipschitz bounds. 
\smallskip

Up to a subsequence, we can assume that,  as $R\to \infty$,  the $\chi_R$'s converge locally uniformly to some $\chi_\infty$, which is no longer periodic,  solving \eqref{takis2}. 
\end{proof}
\smallskip

We discuss next  some  properties of the map $\chi_\infty$ and the optimal trajectories for $\chi$ and $\chi_\infty$ which will  be useful for the asymptotic limit of the random perturbation in Section~5. The proof of  Corollary~\ref{cor:cor} is presented at the end of Section~4, since it is there that all the necessary machinery is been developed.
\smallskip

\begin{lem}\label{lem:avoid} In addition to \eqref{takis5} and \eqref{takis6},  assume  that the minimizing Mather measure is unique and $e\neq 0$ in \eqref{RotNumber}.  For any $C>0$ and any $\ep>0$, there is a time $T_0=T_0(C,\ep)>0$ such that, if $\gamma$ is such that
\be\label{condgamma}
\int_0^T \left( L(\dot \gamma(t), \gamma(t))+\overline H\right)dt \leq C \ \text{ for all} \ T\geq T_0, 
\ee
then 
$$
\left| \frac{\gamma(t)-\gamma(0)}{t}-e\right|\leq \ep  \ \text{ for all} \ t\geq T_0.
$$ 
\end{lem}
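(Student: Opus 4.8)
The plan is to argue by contradiction using a compactness argument on the flow of near-optimal curves. Suppose the statement fails for some $C>0$ and $\ep>0$. Then there are sequences $T_n\to\infty$ and curves $\gamma_n$ satisfying \eqref{condgamma} with this $C$, but with $|(\gamma_n(t_n)-\gamma_n(0))/t_n - e| > \ep$ for some $t_n\geq T_n$. The first step is to normalize: by $\Z^d$-periodicity of $H$ (hence of $L$ and $\overline H$) we may translate so that $\gamma_n(0)\in Q$, and since $L(\alpha,x)+\overline H\geq 0$ (a standard consequence of \eqref{infbarH} together with superlinearity, or directly from the corrector inequality $L(\alpha,x)+\overline H\geq \lg D\chi(x),-\alpha\rg$ integrated), the bound \eqref{condgamma} forces $\int_0^{t_n}(L(\dot\gamma_n,\gamma_n)+\overline H)\,dt\leq C$, so the $\gamma_n$ have uniformly bounded action on $[0,t_n]$.

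Next I would extract a limiting object. The uniform action bound plus superlinearity of $L$ in $\alpha$ gives equi-integrability of $\dot\gamma_n$, so along a subsequence $\gamma_n\to\gamma_\infty$ locally uniformly on $[0,\infty)$ with $\gamma_\infty$ locally Lipschitz (or at least locally absolutely continuous) and $\int_0^T(L(\dot\gamma_\infty,\gamma_\infty)+\overline H)\,dt\leq C$ for all $T$ by lower semicontinuity of the action functional. Thus $\gamma_\infty$ is a curve of bounded action, i.e. it is calibrated/minimizing in the weak KAM sense: because its action is globally bounded, $\gamma_\infty$ must be a minimizing (Mather) trajectory — more precisely, the empirical measures $\mu_n^T := \frac{1}{T}\int_0^T \delta_{(\dot\gamma_\infty(t),\gamma_\infty(t))}\,dt$ have, as $T\to\infty$ along a suitable sequence, a weak limit $\mu$ which is a closed measure with zero average action, hence by \eqref{infbarH} a minimizing Mather measure. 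By the assumed uniqueness of $\tilde\sigma$ (and hence, via the identification of $\tilde\mu$ with the image of $\tilde\sigma$ under $x\mapsto(D_pH(D\chi(x),x),x)$, uniqueness of $\tilde\mu$), we get $\mu=\tilde\mu$. Testing $\mu$ against the coordinate function $\alpha$ recovers the rotation number: $(\gamma_\infty(T_k)-\gamma_\infty(0))/T_k\to \int \alpha\,d\tilde\mu(\alpha,x) = \int -D_pH(D\chi(x),x)\,d\tilde\sigma(x) = e$ along the sequence. One then upgrades this to: $(\gamma_\infty(t)-\gamma_\infty(0))/t\to e$ as $t\to\infty$ (the same compactness applies to every subsequence of times, and every subsequential limit must again be $\tilde\mu$, so the full limit exists and equals $e$).

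Now I would transfer this back to the $\gamma_n$. For the fixed large time that realizes the discrepancy one has to be slightly careful because $t_n\to\infty$ with $n$; but the uniform action bound is the key uniformity. The cleanest route: show first that there is a single time $T_0$ such that \emph{every} curve $\gamma$ with $\int_0^T(L(\dot\gamma,\gamma)+\overline H)\,dt\leq C$ for all $T$ satisfies $|(\gamma(t)-\gamma(0))/t - e|\leq\ep$ for all $t\geq T_0$ — this is exactly the contrapositive of the contradiction hypothesis once we observe that the property "action $\leq C$ on every $[0,T]$" is closed under the locally uniform limits above and is translation-invariant in the base point; so a failure would produce, after normalizing $\gamma_n(0)\in Q$ and passing to the limit, a bounded-action curve $\gamma_\infty$ violating $(\gamma_\infty(t)-\gamma_\infty(0))/t\to e$, contradicting the previous paragraph. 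Finally I would note that the $\gamma$ in the statement satisfies $\int_0^T(L+\overline H)\leq C$ for all $T\geq T_0$, and for $T<T_0$ the quantity is bounded by a constant depending only on $C$, $T_0$ and the (uniform, by superlinearity and the bound) Lipschitz constant of such near-minimizers — so the hypothesis \eqref{condgamma} does give the full bounded-action property after enlarging $T_0$, and we conclude.

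The main obstacle I expect is the step identifying the limiting empirical measure with the \emph{unique} Mather measure $\tilde\mu$: one must know that a globally bounded-action curve (in the full space, periodic setting) generates only Mather measures in its set of limiting empirical measures, and that uniqueness of $\tilde\sigma$ forces uniqueness of $\tilde\mu$ — this is where ergodicity of $\tilde\sigma$ and the structure theory of weak KAM (the Aubry set, the fact that minimizers stay on it, Birkhoff-type ergodic averaging) really gets used, and it is the part most sensitive to the precise hypotheses. The quantitative uniformity in $C$ and $\ep$ (a single $T_0$ working for all admissible $\gamma$ simultaneously) is then a soft consequence of the compactness, but it needs the contradiction framing to be set up exactly as above.
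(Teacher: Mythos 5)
Your ingredients are the right ones — normalize $\gamma_n(0)\in Q$ by periodicity, exploit coercivity of $L$ plus the action bound to get tightness, identify any limiting closed minimizing occupational measure with $\tilde\mu$ by the uniqueness hypothesis, and test against $\xi$ to produce $e$ — and this is indeed how the paper argues. But the detour through a limiting \emph{curve} $\gamma_\infty$ leaves a real gap at the point where you transfer the failure back. The contradiction hypothesis yields $\gamma_n$ and times $t_n\geq T_n\to\infty$ with $\left|\frac{\gamma_n(t_n)-\gamma_n(0)}{t_n}-e\right|>\ep$; you then pass to a locally uniform limit $\gamma_n\to\gamma_\infty$ and claim $\gamma_\infty$ inherits a violation of $\frac{\gamma_\infty(t)-\gamma_\infty(0)}{t}\to e$. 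It does not: local uniform convergence on $[0,\infty)$ says nothing about $\gamma_n$ at the escaping times $t_n$, so the discrepancy of each $\gamma_n$ at $t_n$ simply vanishes in the limit. Nothing in your compactness setup rules out, say, $\gamma_n=\gamma_\infty$ on $[0,n]$ with the deviation confined to $(n,\infty)$, and then $\gamma_\infty$ has rotation number $e$ while every $\gamma_n$ violates the quantitative bound. This is exactly the uniformity you flag as a ``soft consequence of compactness''; as you have set it up it is not a consequence at all.

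The fix, which is what the paper does, is to attach the occupational measures to the curves $\gamma_n$ themselves and take the \emph{joint} limit: set $\mu_{n,T}(\phi):=\frac{1}{T}\int_0^T\phi(\dot\gamma_n(t),\gamma_n(t))\,dt$, show tightness of the whole family $(\mu_{n,T})$ from the action bound and coercivity of $L$, observe that any subsequential limit as $n,T\to\infty$ is closed with minimal action and hence equals $\tilde\mu$ by uniqueness, and conclude that $\mu_{n,T}\to\tilde\mu$ jointly. Specializing to $T=t_n$ gives $\frac{\gamma_n(t_n)-\gamma_n(0)}{t_n}=\int\xi\,d\mu_{n,t_n}\to\int\xi\,d\tilde\mu=e$, the desired contradiction. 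One small secondary correction: $L(\alpha,x)+\overline H\geq 0$ is false pointwise; what you actually have is $L(\alpha,x)+\overline H\geq -\lg D\chi(x),\alpha\rg$, hence $\int_0^T(L(\dot\gamma,\gamma)+\overline H)\,dt\geq\chi(\gamma(0))-\chi(\gamma(T))\geq -\osc\chi$, which is enough both to upgrade the action bound from $T\geq T_0$ to all $T$ (after enlarging $C$) and to drive the tightness and uniform integrability you need.
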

We remark that the coercivity assumption and \eqref{condgamma} imply that $ \|\dot \gamma\|_{L^2}$ and, hence, $|\gamma(t)-\gamma(s)|$ are uniformly bounded on bounded (time) intervals of size less than $T_0$. The lemma above, provides a bound on $|\gamma(t)-\gamma(s)|$  for time intervals of length larger than $T_0$. 
\smallskip

An immediate consequence of Lemma~\ref{lem:avoid}, which is used in the analysis of the asymptotic limits,  is stated in the next corollary. Its proof, which is essentially a restatement of the conclusion of  Lemma~\ref{lem:avoid},  is omitted. 
\begin{cor}\label{cor:takis} In addition to \eqref{takis5} and \eqref{takis6},  assume that the minimizing Mather measure is unique and $e\neq 0$ in \eqref{RotNumber}. For any $C, \theta>0$, there exist $T_0=T_0(C,\theta)>0$ and $R_0=R_0(C)>0$, such that, if $\gamma$  satisfies  the bound \eqref{condgamma} with the given $C$, then 
$$
\inf_{s\geq t} \lg \gamma(s)-\gamma(t), e \rg \geq -R_0 \ \  {\rm and} \ \  \inf_{s\geq t+T_0}\lg \gamma(s)-\gamma(t), e \rg \geq \theta \  \text{for all}  \ t\geq 0. 
$$
In particular, there exists  $K=K(C)>0$  such that any $\gamma$ satisfying $\lg e,\gamma(0)\rg \geq K$ and \eqref{condgamma} avoids the support of $\zeta$ for any positive time. 
\end{cor}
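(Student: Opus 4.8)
The plan is to extract everything from Lemma~\ref{lem:avoid}, which is the real engine here; the corollary is essentially a reformulation in terms of the linear functional $x \mapsto \langle x, e\rangle$. First I would fix $C>0$ and, given the curve $\gamma$ satisfying \eqref{condgamma}, apply Lemma~\ref{lem:avoid}. The subtlety is that \eqref{condgamma} is a bound on the action over intervals $[0,T]$ starting at time $0$, whereas Corollary~\ref{cor:takis} asks for control over arbitrary subintervals $[t, s]$. The point is that if $\gamma$ satisfies \eqref{condgamma} on $[0,\infty)$, then for any $t\geq 0$ the shifted curve $s\mapsto \gamma(t+s)$ also satisfies a bound of the same type: indeed, since $L(\alpha,x)+\overline H \geq 0$ (by the very definition of $\overline H$ via \eqref{infbarH}, or equivalently because $\chi$ is a subsolution, so along any curve $\chi(\gamma(0)) - \chi(\gamma(T)) \leq \int_0^T(L+\overline H)$, hence the integrand has nonnegative integral and in fact $\int_t^s(L(\dot\gamma,\gamma)+\overline H)\,ds' \leq \int_0^{s}(L(\dot\gamma,\gamma)+\overline H)\,ds' + \operatorname{osc}\chi \leq C + \operatorname{osc}\chi =: C'$). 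Thus the shifted curve satisfies \eqref{condgamma} with constant $C'$ depending only on $C$ (and $H$), and Lemma~\ref{lem:avoid} applies to it.

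Next I would unwind the conclusion of Lemma~\ref{lem:avoid} for the shifted curve. With $\ep := |e|/2$ we get, for all $\tau\geq T_0(C',|e|/2)=:T_1$,
$$
\left|\frac{\gamma(t+\tau)-\gamma(t)}{\tau}-e\right|\leq \frac{|e|}{2},
$$
so pairing with $e$ gives $\langle \gamma(t+\tau)-\gamma(t), e\rangle \geq \tau(|e|^2 - \tfrac12|e|^2) = \tfrac12 |e|^2\,\tau$ for $\tau\geq T_1$. In particular $\langle\gamma(s)-\gamma(t),e\rangle \geq \tfrac12|e|^2(s-t)$ once $s-t\geq T_1$, which already yields the second inequality of the corollary: given $\theta>0$, choose $T_0 := \max\{T_1,\, 2\theta/|e|^2\}$, so that $s\geq t+T_0$ forces $\langle\gamma(s)-\gamma(t),e\rangle\geq\theta$. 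For the first inequality I would handle the short-time range $0\leq s-t\leq T_1$ separately: by the remark following Lemma~\ref{lem:avoid} (coercivity plus the bound $C'$ on the action) one has $|\gamma(s)-\gamma(t)|\leq M$ for a constant $M=M(C)$ whenever $0\leq s-t\leq T_1$, whence $\langle\gamma(s)-\gamma(t),e\rangle \geq -M|e|$ on that range, while on $s-t\geq T_1$ the quantity is nonnegative; setting $R_0 := M|e|$ gives $\inf_{s\geq t}\langle\gamma(s)-\gamma(t),e\rangle\geq -R_0$ for all $t\geq 0$, uniformly in $\gamma$.

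Finally, the ``in particular'' statement. Take $K := R_0 + \max_{x\in\operatorname{sppt}(\zeta)}\langle x,e\rangle$ (a finite number since $\zeta$ is compactly supported). If $\langle e,\gamma(0)\rangle \geq K$ and \eqref{condgamma} holds, then for every $t\geq 0$,
$$
\langle \gamma(t), e\rangle = \langle\gamma(0),e\rangle + \langle\gamma(t)-\gamma(0),e\rangle \geq K - R_0 = \max_{x\in\operatorname{sppt}(\zeta)}\langle x,e\rangle,
$$
using the first inequality with $t'=0$. Since the right side is strictly larger than $\langle x,e\rangle$ is the sup over the (compact) support, a point $y$ with $\langle y,e\rangle > \max_{x\in\operatorname{sppt}(\zeta)}\langle x,e\rangle$ cannot lie in $\operatorname{sppt}(\zeta)$; hence $\gamma(t)\notin\operatorname{sppt}(\zeta)$ for all $t\geq 0$. (If one wants the strict separation cleanly one enlarges $K$ by one, so that $\langle\gamma(t),e\rangle > \max_{\operatorname{sppt}(\zeta)}\langle\cdot,e\rangle$ strictly.) The main obstacle, and the only genuinely non-bookkeeping point, is the passage from the ``from time $0$'' hypothesis \eqref{condgamma} to a bound on all shifted curves; once the nonnegativity of $L+\overline H$ and the $\operatorname{osc}\chi$ correction are in place, everything else is a direct translation of Lemma~\ref{lem:avoid}, which is exactly why the authors say the proof ``is essentially a restatement'' and omit it.
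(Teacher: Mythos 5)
Your proposal is correct and takes exactly the approach the authors had in mind (they omit the proof, describing it as a restatement of Lemma~\ref{lem:avoid}). The key point you isolate — that the time-shifted curve $\gamma(t+\cdot)$ also satisfies a uniform action bound, via the subsolution inequality $\int_t^s(L(\dot\gamma,\gamma)+\overline H)\,d\tau \geq \chi(\gamma(t))-\chi(\gamma(s)) \geq -\operatorname{osc}\chi$, so that Lemma~\ref{lem:avoid} applies with $C'=C+\operatorname{osc}\chi$ and $\ep=|e|/2$, while the short-time range $0\leq s-t\leq T_1$ is handled by the coercivity remark following the Lemma — is precisely the content of the translation, and the bookkeeping (dependence of $T_0$ on $C,\theta$ and of $R_0$ on $C$ alone) comes out correctly.
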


\begin{proof} [The proof of Lemma~\ref{lem:avoid}] Let $\gamma_n$ be a sequence of trajectories satisfying \eqref{condgamma}. In view of the  periodicity, we may  assume without loss of generality that $\gamma_n(0)\in Q$. 
\smallskip

Let $\mu_{n,T}$ be the occupational measure on $\R^d\times\R^d$ which is periodic in space and defined, for all $\phi=\phi(\xi,x) \in C^\infty(\R^d\times\R^d)$ which are periodic with respect to $x$, by 
$$
\int_{\R^d\times Q} \phi(\xi,x) d\mu_{n,T}(\xi,x):= \frac{1}{T}\int_0^T \phi(\dot \gamma_n(t), \gamma_n(t))dt.
$$
It follows from  \eqref{condgamma} that 
$$
\int_{\R^d\times Q}  L(\xi,x) d\mu_{n,T}(\xi,x) \leq -\overline H+\frac{C}{T},
$$
and, hence, in view of the coercivity of $L$,  the family $(\mu_{n,T})_{n \in \N, T\geq 0}$ is tight. 
\smallskip

Then, as $n \ \text{and} \ T \to \infty$, there exists a subsequence  of $\mu_{n,T}$ (for simplicity we do not change the notation of the subsequence)  that  converges weakly  to a measure $\mu$ satisfying 
$$
\int_{\R^d\times Q}  L(\xi,x) d\mu(\xi,x) \leq -\overline H.
$$
Note also that, for any $\Z^d$-periodic $\phi \in C^1(\R^d)$, 
$$
\begin{array}{rl}
\ds \int_{\R^d\times Q} \lg D\phi(x), \xi\rg d\mu(\xi,x)= & \ds \lim_{n,T\to \infty} \int_{\R^d\times Q} \lg D\phi(x), \xi\rg \mu_{n,T}(\xi,x) \\[4mm]
=& \ds \lim_{n,T\to \infty}  \frac{\phi(\gamma_n(T))-\phi(\gamma_n(0))}{T}=0,
\end{array}
$$
that is $\mu$ is also closed, and, hence,   a Mather minimizing measure and, in view of the assumed uniqueness, the entire family $(\mu_{n,T})_{n \in \N, T\geq 0}$ converges to $\mu$ as  $n, T \to \infty$. 
Moreover, 
$\mu$ is the image of $\tilde \sigma$ by the map $x\mapsto (-D_pH(D\chi(x),x),x)$. 
\smallskip

In particular,  as $n \ \text{and} \ T \to \infty$, 
$$
\frac{\gamma_n(T)-\gamma_n(0)}{T}= \frac{1}{T} \int_0^T \dot \gamma_n(t)dt= \int_{\R^d\times Q} \xi d\mu_{n,T}(\xi,x)\to \int_{Q} -D_pH(D\chi(x),x) d\tilde \sigma(x)=e. 
$$
The claim  then follows from the assumption that $e\neq 0$.
\end{proof}

Finally the proof of Theorem~\ref{them:main} yields the  following  partial description of $\chi_\infty$ which is of independent interest. 
\begin{cor}\label{cor:cor}   Let ${\mathcal O}$ be the open set of points such that $(\bar \gamma^x)_{t\in \R}$ does not intersects ${\rm sppt}(\zeta)$.  There exists  $c\in \R$ such that 
\begin{itemize}
\item[(i)] $\chi_\infty = \chi+c$ in ${\mathcal O}$, 
\item[(ii)] $\chi_\infty\geq \chi+c$ in $\R^d$, 
\item[(iii)]  there exists  $K\geq 0$ such that,  if $\lg x,e\rg \geq K$,  then $\chi_\infty (x)= \chi(x)+c$,
\item[(iv)] for any $\ep>0$  there exists $K_\ep>0$ such that, if $\lg x,e\rg \leq -K_\ep$, then 
$(\chi_\infty-\chi)(x) \leq c+\ep$.
\end{itemize}
\end{cor}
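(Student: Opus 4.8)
The plan is to build the statement around the comparison between $\chi_\infty$ and $\chi$, exploiting that both solve an eikonal-type equation and that along optimal trajectories of $\chi$ the perturbation $\zeta$ is eventually ``out of reach''. First I would recall the optimal control representations: since $\chi_\infty$ solves \eqref{takis2}, it satisfies the dynamic programming principle with running cost $L(\dot\gamma(s),\gamma(s))+\overline H+\zeta(\gamma(s))$, while $\chi$ solves the corresponding problem with cost $L(\dot\gamma(s),\gamma(s))+\overline H$. Because $\zeta\geq0$, evaluating the representation for $\chi_\infty$ along any admissible curve and dropping the $\zeta$ term shows $\chi_\infty\geq \chi+c_\infty$ where $c_\infty$ is determined by matching at a convenient reference point; this is essentially the content of (ii) once $c$ is chosen correctly. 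The delicate point is to show this inequality is an \emph{equality} on $\mathcal O$.

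For (i) and (iii): take $x\in\mathcal O$, so the whole orbit $(\bar\gamma^x_t)_{t\in\R}$ stays away from $\mathrm{sppt}(\zeta)$. Run the representation for $\chi_\infty(x)$ along $\bar\gamma^x$ itself on $[0,t]$: since $\zeta(\bar\gamma^x(s))=0$ for all $s$, this gives $\chi_\infty(x)\leq \int_0^t (L(\dot{\bar\gamma}^x,\bar\gamma^x)+\overline H)\,ds+\chi_\infty(\bar\gamma^x(t))$. On the other hand $\chi(x)$ equals exactly that integral plus $\chi(\bar\gamma^x(t))$ by \eqref{takisoptimal}. Subtracting and using (ii) at the point $\bar\gamma^x(t)$, one gets $\chi_\infty(x)-\chi(x)\leq \chi_\infty(\bar\gamma^x(t))-\chi(\bar\gamma^x(t))$; combined with the reverse inequality obtained by running time backward along $\bar\gamma^x$ (using that $\bar\gamma^x$ is optimal for $\chi$ on both time directions, which needs $\chi\in C^{1,1}$ and the backward flow), one concludes $(\chi_\infty-\chi)(\bar\gamma^x(t))$ is constant in $t$ along the orbit, hence equals some value $c$. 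To see this $c$ is the same constant for every orbit in $\mathcal O$, I would use that $\mathcal O$ is open and connected enough near infinity in the $e$-direction — more precisely, by Corollary \ref{cor:takis}, any point $x$ with $\langle x,e\rangle$ large enough lies in $\mathcal O$ (the optimal orbit avoids $\mathrm{sppt}(\zeta)$ forever), and in that region $\chi_R=\chi+$const by a maximum-principle/localization argument analogous to the one in the proof of the first Lemma, so passing to the limit $R\to\infty$ fixes one global constant $c$; this yields (iii) and pins down $c$ in (i).

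For (iv), the idea is that far in the $-e$ direction, optimal trajectories for $\chi_\infty$ emanating from such a point must, by the rotation-number estimate of Lemma \ref{lem:avoid}, drift in the $+e$ direction and therefore reach the region $\{\langle x,e\rangle\geq K\}$ after a controlled time, spending only a bounded ``time budget'' near $\mathrm{sppt}(\zeta)$ — and along such a near-optimal curve the accumulated extra cost $\int \zeta(\gamma(s))\,ds$ is small when the curve spends little time in $\mathrm{sppt}(\zeta)$; quantifying this via Corollary \ref{cor:takis} (which controls $\langle\gamma(s)-\gamma(t),e\rangle$ from below) and letting $\langle x,e\rangle\to-\infty$ gives $(\chi_\infty-\chi)(x)\leq c+\ep$. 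The main obstacle I anticipate is the rigorous treatment of the ``backward in time'' optimality used in step (i): one must ensure that the representation formula for $\chi_\infty$ can be run along $\bar\gamma^x$ toward $t\to-\infty$ and that the calibrated-curve identity \eqref{takisoptimal} holds in that direction, which relies on $\chi$ being a $C^{1,1}$ weak KAM (Aubry) solution and on the flow $\bar\gamma^x$ being defined for all $t\in\R$ — both granted under \eqref{takisadditional} via \eqref{taxisaubry}–\eqref{takisregularity} — together with a compactness/stability argument to transfer the $R$-periodic structure of $\chi_R$ to $\chi_\infty$ in the limit.
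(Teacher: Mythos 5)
Your plan for item (i) contains a genuine gap. The crucial ingredient is that the forward and backward limits $c^+$ and $c^-$ of $(\chi_\infty-\chi)(\bar\gamma^x(t))$ along orbits in $\mathcal O$ coincide, and this does \emph{not} follow from ``running time backward along $\bar\gamma^x$.'' Lemma~\ref{lem:nondec} gives that $t\mapsto (\chi_\infty-\chi)(\bar\gamma^x(t))$ is non-decreasing on time intervals avoiding ${\rm sppt}(\zeta)$; applying the same dynamic-programming reasoning with the roles of $s_1<s_2$ interchanged, or rephrasing the comparison at $\bar\gamma^x(t)$ and flowing back to $x$, reproduces the \emph{same} monotone inequality and never its reverse. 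So your argument only re-derives $c^-(x)\leq c^+(x)$, which is already in Lemma~\ref{lem:nondec}. The equality $c^+=c^-$ is exactly the non-trivial content of the proof of Theorem~\ref{them:main} for $d\geq 2$: it is obtained there by integrating $\left[(\chi_\infty-\chi)(\bar\gamma^x(s))\right]_0^{T_0}$ against $\tilde\sigma$ via Lemma~\ref{lem:keykey}, decomposing the flow box $\tilde F$ into slices $E(k)$ after an Egoroff reduction, and telescoping, which forces $(c^+-c^-)\tilde\sigma(Q_r)\leq 0$. Any proof of (i) must invoke or reproduce this, and the paper presents the corollary \emph{after} the theorem precisely because it reuses $c^+=c^-$ from that argument.

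There are secondary issues as well. For (ii), ``dropping the $\zeta\geq 0$ term and matching at a reference point'' is too weak: the representation formula only compares $\chi_\infty(x)$ with $\chi_\infty$ at the endpoint of the optimal curve $\tilde\gamma$ for $\chi_\infty$, so you must track $\tilde\gamma(t)$ into the region $\lg y,e\rg\geq K_\ep$ using Lemma~\ref{lem:avoid} and then use the estimate $(\chi_\infty-\chi)(\tilde\gamma(t))\geq c-\ep$ obtained in the preceding step; a ``matching constant'' does not exist a priori. For (iv), the ``time-budget'' heuristic gives only a \emph{bounded} extra cost $\int\zeta(\gamma)\,ds\lesssim 1$, not an $\ep$-small one; the paper instead runs a compactness/uniform-continuity argument over $x\in{\rm sppt}(\zeta)$, symmetric to the one used for the lower estimate $\geq c-\ep$ at the $+e$ side. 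Finally, for (iii) your alternative route through $\chi_R$ and a maximum-principle/localization argument is not what the paper does (it argues directly via Lemma~\ref{lem:avoid} and the monotonicity of Lemma~\ref{lem:nondec}), and you would need to justify why $\chi_R$ equals $\chi$ up to a constant on a half-space, which is not established by the $L^\infty$-bound lemma in Section~2.
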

The proof is presented at the end of Section~4.

\section{ The growth of the perturbed ergodic constant}
Given a Hamiltonian $H$ we consider perturbations of the form $H(p,x)-f(x),$ where $f$ is a non negative potential and prove that, under assumptions on the potential,  the difference of the corresponding effective constants can be controlled by some ``average" of $f$. 
\smallskip

We present two results, one for almost periodic  and one for random media. 
Then  we describe the relationship with the two examples in the introduction and prove \eqref{intro:barH-barHR} and \eqref{intro:barH-barHeta}.

\subsection*{Almost periodic perturbations}
Let  
\be\label{takis7}
f:\R^d\to \R \ \text{be nonnegative, bounded uniformly continuous and almost periodic},
\ee 
and recall that almost periodicity implies  the existence of the average 
$$
\fint_{\R^d} f=\lim_{R\to+\infty} \fint_{Q_R} f.
$$
Given $H$ satisfying \eqref{takis5} and \eqref{takis6}, we consider the perturbed Hamiltonian
$$H_f(p,x):=H(p,x)-f(x),$$
and note that, for any $p\in \R^d$, $x\to H_f(p,x)$ is almost periodic.

\smallskip

We recall (see Ishii \cite{ishii}) that  the ergodic constant $\overline H_f$ associated with $H_f$  is obtained as the uniform in $\R^d$ limit, as $\delta\to 0$, of $-\delta v^\delta$,
 where $v^{\delta}$ is the the unique bounded viscosity solution to 
$$
\delta v^{\delta} +H_f(Dv^{\delta},x)=0 \  {\rm in } \ \R^d.
$$
It is  straightforward implication of the comparison principle of viscosity solutions that 
$$
0\leq \overline H-\overline H_f \leq \|f\|_\infty.
$$
This estimate does not depend on the almost periodicity of $f$ and, hence, is not useful here since it does not ``see'' the averaging that is taking place. 
\smallskip

To obtain a more precise  estimate of the difference $\overline H-\overline H_f,$
we introduce the auxiliary quantity 
$$
\hat f(x):= \limsup_{N\to+\infty} N^{-d} \sum_{Q_1(k)\subset Q_N}f(x+k),
$$
and note that, since $f$ is uniformly continuous,  $\hat f$ is a $\Z^d-$periodic and continuous map, and, 
moreover, and this is very important, $\|\hat f\|_\infty$ is in general much smaller than $\|f\|_\infty$. 

\smallskip
%

To illustrate the difference between $\|\hat f\|_\infty$ and $\|f\|_\infty$, we discuss  the example we considered in the introduction, that is the periodic perturbation $\zeta_R$ given by \eqref{takis8}, which is obviously almost periodic,
and we estimate $\|\hat \zeta_R\|_\infty.$
\smallskip

 

%
 
Let $K>0$ such that the support of $\zeta$ is contained in $Q_K$. Then,  for any $x\in Q$ and $N\in \N$, we have
$$
N^{-d} \sum_{Q_1(k)\subset Q_N} \zeta_R(x+k) \leq N^{-d}  \sum_{Q_1(k)\subset Q_N} \sum_{k'\in \Z^d} \|\zeta\|_\infty {\bf 1}_{\{|k+Rk'|\leq K+1\}}.
$$
\smallskip

It follows that,  for $N$ sufficiently larger than $R$, 
$$
N^{-d} \sum_{Q_1(k)\subset Q_N} \zeta_R(x+k) \leq N^{-d} \|\zeta\|_\infty \sum_{k'\in \Z^d} \sharp\left\{k\in \Z^d: Q_1(k)\subset Q_N\cap Q_{K+1}(-Rk') \right\},
$$
and, hence, 
$$
\|\hat \zeta_R\|_\infty\lesssim R^{-d}.
$$
Note that, as $R\to+\infty$,  $\hat \zeta_R \to 0$, whereas $\|\zeta_R\|_\infty=\|\zeta\|_\infty$ is constant. 
\smallskip

The general result about the size of the perturbation is the following Theorem. 
\begin{thm}\label{prop:CValmostp} Assume \eqref{takis5}, \eqref{takis6} and \eqref{takis7}. Then 
$$
0\leq \overline H-\overline H_f \leq \|\hat f\|_\infty.
$$
\end{thm}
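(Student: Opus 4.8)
The lower bound $0 \le \overline H - \overline H_f$ is immediate from the comparison principle: since $f \ge 0$, we have $H_f \le H$, so the subsolutions of the approximate cell problem for $H$ are subsolutions for $H_f$, giving $-\delta v^\delta_f \le -\delta v^\delta$ and hence $\overline H_f \le \overline H$ in the limit $\delta \to 0$. The content is the upper bound $\overline H - \overline H_f \le \|\hat f\|_\infty$.

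The plan is to build, for each $\lambda > \|\hat f\|_\infty$ (or with an $\ep$ to spare), an approximate subcorrector for $H$ at level $\overline H_f + \lambda$, using the corrector $\chi_f$ of the $H_f$ cell problem as raw material together with a smoothing/averaging over a large cube. Concretely, recall that $\overline H_f$ is characterized via the discounted problem, and that (at least along subsequences, after renormalizing by $-\delta v^\delta_f(0)$) one gets a bounded uniformly continuous function $w$ solving $H(Dw,x) = f(x) + \overline H_f$ in $\R^d$ in the viscosity sense. The key idea is that averaging $w$ over the shifts $w(\cdot + k)$ with $k$ ranging over a large cube $Q_N \cap \Z^d$ — using the convexity of $H$ in $p$, via Jensen's inequality at the level of the (sub)solution property — produces a function $w_N$ which is an approximate subsolution of $H(Dw_N, x) \le \hat f(x) + o(1) + \overline H_f$, because $N^{-d}\sum_{Q_1(k)\subset Q_N} f(x+k) \to \hat f(x)$ (with the $\limsup$ controlled uniformly, using uniform continuity of $f$) and because $H(\cdot,x)$ is $\Z^d$-periodic in $x$ so the shifted equations all have the same $x$-dependence. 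Then $w_N$, being an approximate subsolution at level $\overline H_f + \|\hat f\|_\infty + \ep$ for $N$ large, and being bounded (the bound on $w$ is uniform), forces $\overline H \le \overline H_f + \|\hat f\|_\infty + \ep$ by the standard argument that a bounded approximate subcorrector caps the ergodic constant: one compares $w_N$ with $\overline H t + $ (solution of the evolution equation) or runs it through the discounted problem $\delta v^\delta + H(Dv^\delta, x) = 0$ to get $-\delta v^\delta \le \delta \|w_N\|_\infty + \overline H_f + \|\hat f\|_\infty + \ep$, then send $\delta \to 0$. Letting $\ep \to 0$ finishes it.

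The main obstacle is making the convexity-averaging step rigorous at the level of viscosity solutions: the naive statement "a convex combination of subsolutions of $H(Du,x)=c_i$ is a subsolution of $H(Du,x) = \sum \theta_i c_i$" when $H$ is convex in $p$ is true, but here one is averaging over many translates and one must (a) handle the $\limsup$ defining $\hat f$ — i.e. replace $\hat f$ by $\hat f + \ep$ and use that for $N$ large enough the finite averages $N^{-d}\sum f(x+k)$ are $\le \hat f(x) + \ep$ uniformly in $x$, which uses uniform continuity and the definition of $\hat f$ as a $\limsup$ plus a covering/equicontinuity argument; and (b) keep the gradient bound uniform so that $w_N$ is Lipschitz with a constant independent of $N$, which follows from coercivity of $H$ applied to each translate. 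A clean way to sidestep subtleties is to work directly with the discounted equations: let $v^\delta_f$ solve $\delta v^\delta_f + H_f(Dv^\delta_f,x) = 0$, set $\bar v^\delta_N(x) := N^{-d}\sum_{Q_1(k)\subset Q_N} v^\delta_f(x+k)$, and use convexity of $H$ in $p$ together with $H(p,x+k)=H(p,x)$ to check that $\bar v^\delta_N$ is a viscosity subsolution of $\delta \bar v^\delta_N + H(D\bar v^\delta_N, x) \le N^{-d}\sum_{Q_1(k)\subset Q_N} f(x+k) + \delta\,\mathrm{osc}(v^\delta_f)$; then compare with $v^\delta$ solving the $H$-problem, using that for $N$ large the right-hand average is $\le \|\hat f\|_\infty + \ep$, to get $-\delta v^\delta(0) \le -\delta v^\delta_f(0) + \|\hat f\|_\infty + \ep + \delta \cdot C$, and pass to the limit $\delta \to 0$ (the oscillation of $v^\delta_f$ is bounded uniformly in $\delta$ by coercivity) and then $\ep \to 0$. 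This formulation avoids any compactness extraction and keeps everything at the level of the comparison principle, which is the cleanest route.
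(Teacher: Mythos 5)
Your proof is correct, and it takes a genuinely different route from the paper's. The paper works on the ``dual'' side: it regularizes by vanishing viscosity $\ep\Delta$, subtracts the two viscous discounted equations and linearizes around $D\chi^\ep$ via convexity, multiplies by the smooth viscous Mather density $\tilde\sigma^\ep$, integrates over $Q_N$ so that the invariance identity \eqref{takis10} kills the transport term, divides by $N^d$, and passes to the limit $N\to\infty$ by (reverse) Fatou to obtain $\overline H-\overline H_f\le\int_Q \hat f\,d\tilde\sigma^\ep$, then sends $\ep\to0,\ \delta\to0$. The viscous regularization is essential there, because without it the Mather measure may be singular and the integration by parts against $\tilde\sigma\,D(v^\delta-\chi)$ is not well defined. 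You instead stay entirely on the ``primal'' side: you form $\bar v^\delta_N(x)=N^{-d}\sum_{Q_1(k)\subset Q_N}v^\delta_f(x+k)$, use periodicity of $H$ in $x$ together with convexity in $p$ (Jensen at a.e.\ points of differentiability, then mollification to recover the viscosity property) to show $\bar v^\delta_N$ is a subsolution of the $H$-discounted problem with right-hand side the lattice average of $f$, and then invoke the comparison principle. This avoids vanishing viscosity and the Mather machinery entirely and yields $\|\hat f\|_\infty$ directly (the paper in fact proves the slightly sharper $\int_Q\hat f\,d\tilde\sigma$ and bounds crudely). The one step you must make rigorous, and you correctly flag it, is upgrading the pointwise $\limsup$ defining $\hat f$ to the uniform bound $\sup_x N^{-d}\sum_{Q_1(k)\subset Q_N}f(x+k)\le\|\hat f\|_\infty+\ep$ for all $N$ large: this does hold because, for Bohr almost periodic $f$, approximation by trigonometric polynomials shows the lattice averages converge \emph{uniformly} in $x$, so the $\limsup$ is in fact a uniform limit. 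The paper sidesteps this issue by integrating against $\tilde\sigma^\ep$ over $Q$ and using Fatou, which only needs the pointwise $\limsup$. Two minor remarks on your write-up: the $\delta\,\mathrm{osc}(v^\delta_f)$ correction is not needed in the averaged subsolution inequality (the convexity step gives it exactly); it does legitimately reappear, as you use it, when you replace $-\delta\bar v^\delta_N(0)$ by $-\delta v^\delta_f(0)$ using the $\delta$-uniform oscillation bound, and it is harmless.
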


\smallskip

In view of the computations above, we have the following corollary. 

\begin{cor}\label{cor:1} Assume \eqref{takis5}and  \eqref{takis6} and  consider, for $R \in \N$,  the perturbation $\zeta_R$ given by \eqref{takis8}. 
Then
$$
0\leq \overline H-\overline H_R \lesssim R^{-d}.$$
\end{cor}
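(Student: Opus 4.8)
The plan is to derive Corollary~\ref{cor:1} directly from Theorem~\ref{prop:CValmostp} by specializing $f=\zeta_R$ and invoking the bound on $\|\hat\zeta_R\|_\infty$ that was established in the discussion immediately preceding the theorem. First I would observe that $\zeta_R$, being $R\Z^d$-periodic, is in particular almost periodic, and, since $\zeta$ is nonnegative and Lipschitz continuous (hence bounded and uniformly continuous) with compact support, the finite sum defining $\zeta_R(x)=\sum_{k\in\Z^d}\zeta(x-Rk)$ is locally finite and $\zeta_R$ is itself nonnegative, bounded and uniformly continuous. Thus $f=\zeta_R$ satisfies hypothesis \eqref{takis7} and Theorem~\ref{prop:CValmostp} applies, yielding
\be\label{plan:applythm}
0\leq \overline H-\overline H_R\leq \|\hat\zeta_R\|_\infty.
\ee

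Second, I would recall that the computation carried out just above Theorem~\ref{prop:CValmostp} shows exactly that $\|\hat\zeta_R\|_\infty\lesssim R^{-d}$. For completeness one could reproduce the short argument: choosing $K>0$ with $\supp(\zeta)\subset Q_K$, for $x\in Q$ and $N$ large compared to $R$ one bounds
$$
N^{-d}\sum_{Q_1(k)\subset Q_N}\zeta_R(x+k)\leq N^{-d}\|\zeta\|_\infty\sum_{k'\in\Z^d}\sharp\bigl\{k\in\Z^d: Q_1(k)\subset Q_N\cap Q_{K+1}(-Rk')\bigr\},
$$
and since there are of order $(N/R)^d$ values of $k'$ for which the set on the right is nonempty, each contributing at most a constant depending only on $K$ and $d$, the right-hand side is $\lesssim R^{-d}$; passing to the $\limsup$ in $N$ and then the supremum over $x\in Q$ (using $\Z^d$-periodicity of $\hat\zeta_R$) gives $\|\hat\zeta_R\|_\infty\lesssim R^{-d}$ with a constant depending only on $\zeta$ and $d$. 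Combining this with \eqref{plan:applythm} gives the claimed estimate $0\leq\overline H-\overline H_R\lesssim R^{-d}$.

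Since essentially all the work has been front-loaded into Theorem~\ref{prop:CValmostp} and into the explicit estimate of $\|\hat\zeta_R\|_\infty$, there is no real obstacle here; the only point requiring a line of care is verifying that $\zeta_R$ genuinely falls under hypothesis \eqref{takis7} (almost periodicity and uniform continuity of the infinite sum), which is immediate from the compact support and Lipschitz regularity of $\zeta$. The proof is therefore a one-paragraph specialization.

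\begin{proof}[Proof of Corollary~\ref{cor:1}]
Since $\zeta$ is nonnegative, Lipschitz continuous and compactly supported, the function $\zeta_R$ defined by \eqref{takis8} is nonnegative, bounded and uniformly continuous, and it is $R\Z^d$-periodic, hence almost periodic; thus $f:=\zeta_R$ satisfies \eqref{takis7}. Applying Theorem~\ref{prop:CValmostp} with this choice of $f$ we obtain
$$
0\leq \overline H-\overline H_R\leq \|\hat\zeta_R\|_\infty.
$$
By the computation preceding Theorem~\ref{prop:CValmostp}, $\|\hat\zeta_R\|_\infty\lesssim R^{-d}$, with a constant depending only on $\zeta$ and $d$. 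Combining the two displays yields $0\leq\overline H-\overline H_R\lesssim R^{-d}$.
\end{proof}
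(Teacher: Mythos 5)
Your proof is correct and follows exactly the paper's intended route: specialize Theorem~\ref{prop:CValmostp} to $f=\zeta_R$ and invoke the bound $\|\hat\zeta_R\|_\infty\lesssim R^{-d}$ established in the discussion immediately preceding the theorem. The paper leaves this as a one-line ``In view of the computations above,'' and you have simply spelled out the (routine but appropriate) verification that $\zeta_R$ satisfies hypothesis~\eqref{takis7}.
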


Theorem~\ref{prop:CValmostp} states that $\overline H_f$ is close to $\overline H$ if the almost periodic perturbation $f$ is nonnegative and small in ``average". In the two examples discussed below we show  that both conditions are sharp.
\smallskip

The assumption that $\zeta$ is nonnegative cannot  be removed.  we show this in the framework of Corollary~\ref{cor:1}. Indeed, if $H(p)=|p|^2$ and $H_{R}(p,x)=|p|^2-\zeta_R(x)$, then it is known (see \cite{lions1987homogenization}) that $\overline H=0$ and, independently of the sign of $\zeta_R$,  $\overline H_R= -\inf \zeta_R.$ 
In particular,  $\overline H_R$ does not converge to $\overline H=0$ as $R\to+\infty$,  if $\zeta$ takes negative values, since, in this case $\inf \zeta_R = \inf\zeta<0$. 
\smallskip

Next we discuss the manner in which  the perturbation is averaged. When $f$ is nonnegative, it seems reasonable to expect that $ \|\hat f\|_\infty$ can  be replaced by the  average of $f$ in Proposition \ref{prop:CValmostp}.
This is, however, also not true.  For example,  fix $\ep>0$ small and $R>0$ large and consider
the perturbation 
$$
\zeta_R(x)= \ep \sum_{k\in \Z^d} \zeta(R(x-k)),
$$
with $\zeta$  satisfying \eqref{takiszeta} and  $\zeta(0)=1$; note that this $\Z^d-$periodic perturbation differs from the one in \eqref{takis8} and thus Corollary \ref{cor:1} does not apply here. Then, uniformly on $\ep\in(0,1)$,  
$$\lim_{R\to \infty} \ds \fint_{\R^d} \zeta_R=0.$$
Moreover, if $H(p,x)=|p|^2-g(x)$,  where $g$ is continuous, then  $\overline H=-\inf g$. If, in addition, $g$ has  a unique and strict  global minimum on $Q$ at $0$ and $\ep$ is sufficiently small independently of $R$, then $\inf (g+\zeta_R) \approx \inf g+ \zeta_R(0)= \inf g+\ep$.  Hence  $\overline H_{\zeta_R}\approx -(\inf g+\ep)$ does not tend, as $R\to+\infty$, to $\overline H= -\inf g$. 

\begin{proof}[Proof of Theorem~\ref{prop:CValmostp}.] 
Since $f$ is nonnegative, the comparison argument yields  $\overline H_f\leq \overline H$.  
\vskip.075in

For the upper bound, it is convenient to regularize the problem and to consider, for $\ep>0$, the almost periodic solution $v^{\delta,\ep}$ of the  approximate  viscous  cell-problem
\be\label{takis9.9}
\delta v^{\delta,\ep}-\ep \Delta v^{\delta,\ep}+H(Dv^{\delta,\ep},x)=f \  {\rm in} \ \R^d.
\ee
We recall  that, as $\ep \to 0$,  $\delta v^{\delta,\ep} \to \delta v^\delta$ uniformly in $x$ and $\delta$, where $v^\delta$ is the solution to 
$$
\delta v^{\delta}+H(Dv^{\delta},x)=f \  {\rm in} \ \R^d.
$$
\smallskip

We also consider the solution $\chi^\ep$ of the viscous periodic cell-problem associated to $H$, that is 
\be\label{takis9.99}
-\ep \Delta \chi^{\ep}+H(D\chi^{\ep},x)=\overline H^\ep \ \text{in} \ \R^d,
\ee
as well as the associated ergodic measure which has  a continuous, strictly positive, $\Z^d-$periodic  density $\tilde\sigma^\ep$ of mass $1$ over $Q$ satisfying 
\be\label{takis10}
-\ep\Delta \tilde \sigma^\ep-{\rm div}\left( \tilde \sigma^\ep D_pH(D\chi^\ep,x)\right)=0  \ \text{in} \ \R^d.
\ee 
Finally, we recall that $\lim_{\ep \to 0} \overline H^\ep= \overline H$, while the measure $\tilde \sigma^\ep$ converges, up to subsequences,  to some Mather minimizing measure $\tilde \sigma$. 
\smallskip

Subtracting \eqref{takis9.99} from \eqref{takis9.9} and using the  convexity of $H$ we find that  $v^{\delta,\ep}-\chi^\ep$ solves  
$$
-\ep \Delta(v^{\delta,\ep}-\chi^\ep)+\delta v^{\delta,\ep} +\lg D_pH(D\chi^{\ep},x), D(v^{\delta,\ep}-\chi^\ep)\rg\leq f -\overline H^\ep  \ \text{in} \ \R^d.
$$
Multiplying the  above inequality  by $\tilde \sigma^\ep$, integrating over $Q_N$ for a large $N\in \N$ and using that $\sigma^\ep$ is an invariant measure, that is \eqref{takis10}, we find 
\be\label{ineq:jhqsbvoz}
\begin{array}{r}
\ds \delta \int_{Q_N}v^{\delta,\ep}\tilde \sigma^\ep + \int_{\partial Q_N} \lg-\ep D(v^{\delta,\ep}-\chi^\ep)+(v^{\delta,\ep}-\chi^\ep)D_pH(D\chi^{\ep},x)), \nu\rg \tilde \sigma^\ep \\
\ds \leq \int_{Q_N} f(x)\tilde \sigma^\ep(x) dx - \overline H^\ep N^d,
\end{array}
\ee
where $\nu$ is the outward unit normal at $Q_N$. 
\smallskip

Note that the  periodicity of $\tilde\sigma_\epsilon$ yields 
$$
\int_{Q_N} f(x)\tilde \sigma^\ep(x) dx=\int_{Q}  \sum_{Q_1(k)\subset Q_N} f(x+k)\tilde \sigma^\ep(x)dx,
$$
while the integrand of the integral over $\partial Q_N$ in \eqref{ineq:jhqsbvoz} is bounded. 
\smallskip

Dividing \eqref{ineq:jhqsbvoz}  by $N^d,$ letting $N\to+\infty$ and  using  Fatou's Lemma, we get
$$
\delta \fint_{\R^d}v^{\delta,\ep}\tilde \sigma^\ep \leq \int_{Q} \hat f(x)\tilde \sigma^\ep(x) dx - \overline H^\ep .
$$
Finally letting first  $\ep\to0$ and then $\delta\to 0$ yields the claim, in view of the  uniform convergence in $\R^d$ of $v^{\delta,\ep}$ to $v^\delta$ (as $\ep\to0$) and of $\delta v^\delta$ to $-\overline H_f$ (as $\delta\to0$) and by the convergence of $\tilde \sigma^\ep$ to $\tilde \sigma$ in measure (as $\ep\to0$).
\end{proof}

\subsection*{Random perturbations}
We  consider here a perturbation of the Hamiltonian $H$ by a  random potential $f$ in a probability space described in the introduction. 
%
\smallskip

We assume that 
\be\label{takis11}
\begin{cases}
f:\R^d\times \Omega\to \R \ \text{ is nonnegative,  continuous with respect to the first variable} \\[1.2mm]
\text{uniformly with respect to the second variable, and  $\Z^d-$stationary,}
\end{cases}
\ee
that is $f(x+k,\omega)= f(x, \tau_k \omega)$ for all $x\in \R^d, k\in \Z^d$ and $\omega \in \Omega$. It follows that the map $x\to \E[f(x,\cdot)]$ is a continuous and $\Z^d-$periodic function.
\smallskip


Let $\overline H_f$ be the ergodic constant associated with the Hamiltonian $H(p,x) -f(x)$ which exists (see \cite{S}) and is identified by the a.s. limit $\overline H_f := \lim_{\delta\to 0} -\delta v^{\delta} (0)$,
$v^{\delta}$ being the bounded and Lipschitz continuous, with a constant independent of $\delta$, $\Z^d$-stationary solution to the discounted problem
$$\delta v^{\delta} + H(Dv^\delta, x) -f=0 \ \text{in} \ \R^d.$$  

\begin{thm}\label{prop:CVrandom} Assume \eqref{takis5},  \eqref{takis6} and  \eqref{takis11}. Then
$$
0\leq \overline H-\overline H_f \leq \sup_{x\in \R^d}  \E[f(x,\cdot)].
$$ 
\end{thm}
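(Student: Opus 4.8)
The plan is to follow verbatim the structure of the proof of Theorem~\ref{prop:CValmostp}, with the deterministic average encoded there by $\hat f$ replaced here by the spatial ergodic theorem for the action $(\tau_k)_{k\in\Z^d}$. As in the almost periodic case, since $f\ge 0$ the comparison principle for the discounted problems gives $\overline H_f\le\overline H$, so only the upper bound needs work. To set it up I would regularize exactly as in \eqref{takis9.9}--\eqref{takis10}: fix $\ep>0$, let $v^{\delta,\ep}$ be the $\Z^d$-stationary solution of $\delta v^{\delta,\ep}-\ep\Delta v^{\delta,\ep}+H(Dv^{\delta,\ep},x)=f$ (stationarity and boundedness follow from uniqueness of the viscous discounted problem), let $\chi^\ep$ be the $\Z^d$-periodic viscous corrector of $H$ with constant $\overline H^\ep$, and let $\tilde\sigma^\ep$ be its invariant density --- continuous, strictly positive, $\Z^d$-periodic, of mass $1$ on $Q$, solving \eqref{takis10}. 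Recall $\overline H^\ep\to\overline H$, $\tilde\sigma^\ep\to\tilde\sigma$ in measure, and $\delta v^{\delta,\ep}\to\delta v^\delta$ uniformly, a.s., as $\ep\to0$.

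Subtracting the equation for $\chi^\ep$ from that for $v^{\delta,\ep}$, using the convexity of $H$, multiplying by $\tilde\sigma^\ep$, integrating over $Q_N$ for $N\in\N$ large and integrating by parts via \eqref{takis10}, I would obtain, exactly as in \eqref{ineq:jhqsbvoz},
\[
\delta\int_{Q_N}v^{\delta,\ep}\tilde\sigma^\ep+\int_{\partial Q_N}\lg -\ep D(v^{\delta,\ep}-\chi^\ep)+(v^{\delta,\ep}-\chi^\ep)D_pH(D\chi^\ep,x),\nu\rg\tilde\sigma^\ep\le\int_{Q_N}f(x,\omega)\tilde\sigma^\ep(x)\,dx-\overline H^\ep N^d.
\]
For fixed $\delta$ and $\ep$ the functions $v^{\delta,\ep}-\chi^\ep$, $D(v^{\delta,\ep}-\chi^\ep)$, $D_pH(D\chi^\ep,\cdot)$ and $\tilde\sigma^\ep$ are all bounded, so the boundary integral is $O(N^{d-1})$.

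The new ingredient is the limit $N\to\infty$ after dividing by $N^d$. Using the $\Z^d$-periodicity of $\tilde\sigma^\ep$ and the stationarity relation $f(y+k,\omega)=f(y,\tau_k\omega)$, and handling the $O(N^{d-1})$ shell $Q_N\setminus\bigcup_{Q_1(k)\subset Q_N}Q_1(k)$ separately, one rewrites
\[
N^{-d}\int_{Q_N}f(x,\omega)\tilde\sigma^\ep(x)\,dx=\int_Q\tilde\sigma^\ep(y)\Big(N^{-d}\!\!\sum_{Q_1(k)\subset Q_N}\!\!f(y,\tau_k\omega)\Big)dy+O(N^{-1}),
\]
and similarly for the term containing $v^{\delta,\ep}$. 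By the ergodic theorem recalled in the Introduction, for a.e. $\omega$ the inner average tends to $\E[f(y,\cdot)]$ for every fixed $y$; fixing the exceptional null set along a countable dense set of $y$'s and using the uniform-in-$\omega$ continuity of $f(\cdot,\omega)$ from \eqref{takis11} (which makes the averages equicontinuous in $y$), the convergence is in fact uniform on $Q$, a.s. Since $\int_Q\tilde\sigma^\ep=1$ and $\tilde\sigma^\ep\ge0$, this yields, a.s.,
\[
\limsup_{N\to\infty}N^{-d}\int_{Q_N}f(x,\omega)\tilde\sigma^\ep(x)\,dx=\int_Q\tilde\sigma^\ep(y)\E[f(y,\cdot)]\,dy\le\sup_{x\in\R^d}\E[f(x,\cdot)],
\]
while $N^{-d}\delta\int_{Q_N}v^{\delta,\ep}\tilde\sigma^\ep\to\delta\int_Q\tilde\sigma^\ep(y)\E[v^{\delta,\ep}(y,\cdot)]\,dy$ and the boundary term is negligible. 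Hence $\delta\int_Q\tilde\sigma^\ep(y)\E[v^{\delta,\ep}(y,\cdot)]\,dy\le\sup_x\E[f(x,\cdot)]-\overline H^\ep$.

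It then remains to pass to the two limits. Letting $\ep\to0$, using $\delta v^{\delta,\ep}\to\delta v^\delta$ uniformly a.s., $\overline H^\ep\to\overline H$ and $\tilde\sigma^\ep\to\tilde\sigma$ in measure, gives $\delta\int_Q\tilde\sigma(y)\E[v^\delta(y,\cdot)]\,dy\le\sup_x\E[f(x,\cdot)]-\overline H$. Finally, letting $\delta\to0$: the $\delta$-independent Lipschitz bound on $v^\delta$ forces $\osc_Q(\delta v^\delta)\le C\delta\to0$, and since $\delta v^\delta(0,\omega)\to-\overline H_f$ a.s. with $\|\delta v^\delta\|_\infty$ bounded uniformly in $\delta$, dominated convergence gives $\E[\delta v^\delta(y,\cdot)]\to-\overline H_f$ uniformly for $y\in Q$; as $\int_Q\tilde\sigma=1$ this leaves $-\overline H_f\le\sup_x\E[f(x,\cdot)]-\overline H$, which is the assertion. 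I expect the main obstacle to be precisely the third step --- turning the pointwise-in-$x$, a.s.-in-$\omega$ ergodic theorem into a convergence that is simultaneously uniform in $x$ and almost sure, so that one may integrate against $\tilde\sigma^\ep$ and pass to the limit --- together with the bookkeeping needed to keep the boundary contributions and the viscous/discount limits under control; the rest is a transcription of the almost periodic argument.
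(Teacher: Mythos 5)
Your proof is correct, but at the crucial $N\to\infty$ step you take a genuinely different (and considerably heavier) route than the paper. The paper takes expectation of \eqref{ineq:jhqsbvoz} \emph{before} dividing by $N^d$: since $v^{\delta,\ep}$ and $f$ are $\Z^d$-stationary, the maps $x\mapsto\E[v^{\delta,\ep}(x)]$ and $x\mapsto\E[f(x)]$ are $\Z^d$-periodic, so $N^{-d}\int_{Q_N}\E[f]\tilde\sigma^\ep$ and $N^{-d}\delta\int_{Q_N}\E[v^{\delta,\ep}]\tilde\sigma^\ep$ are literally constant in $N\in\N$ (and the expected boundary term is still $O(N^{d-1})$), so the limit is immediate with no ergodic theorem at all. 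You instead fix $\omega$, divide by $N^d$, and invoke the ergodic theorem to pass to the limit almost surely; this forces you to upgrade the pointwise-in-$x$, a.s.-in-$\omega$ convergence to uniform-in-$x$ convergence via a countable dense set and the equicontinuity coming from \eqref{takis11} (and the uniform Lipschitz bound on $v^{\delta,\ep}$). That argument is sound — you correctly identify it as the main obstacle and handle it — and of course once the a.s. limit is a deterministic inequality the ``almost surely'' is harmless. But the paper's order of operations (expectation, then $N\to\infty$) sidesteps the entire equicontinuity/ergodic-theorem machinery, which is exactly the extra price your approach pays. The setup (regularization \eqref{takis9.9}--\eqref{takis10}, convexity, integration by parts) and the final $\ep\to0$, $\delta\to0$ passages match the paper, and your treatment of $\delta v^\delta(0,\cdot)\to-\overline H_f$ a.s. together with the $\delta$-uniform Lipschitz bound and dominated convergence is the right way to close the last step, which the paper leaves terse.
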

%
We describe now  the particular case of the above result which was discussed in the introduction and will be further investigated  in Section \ref{sec:random}. 
\smallskip

Fix $\zeta$ satisfying \eqref{takiszeta} 
and,  for $\eta\in (0,1)$, let $(X^\eta_k)_{k\in \Z^d}$ be a family of i.i.d. Bernoulli random variables of parameter $\eta$, that is 
$$
\P[ X^\eta_0=1]=1-\P[X^\eta_0=0]= \eta.
$$
Set
$$
\zeta_\eta(x):= \sum_{k\in \Z^d} X^\eta_k \zeta(x-k)\qquad {\rm and }\qquad H_\eta(p,x):=H(p,x)-\zeta_\eta(x),
$$
and  denote by $\overline H_\eta$ (instead of $\overline H_{\zeta_\eta}$) the effective constant. 
\smallskip

In this context, Theorm~\ref{prop:CVrandom}  yields immediately the following result. 
\begin{cor}\label{cor:1bis} Assume \eqref{takis5},  \eqref{takis6}  and let $\zeta_\eta$ be defined as above.  Then,  for all $\eta\in (0,1)$, 
$$
0\leq \overline H-\overline H_\eta\lesssim\eta.
$$
\end{cor}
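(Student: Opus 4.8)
The plan is to deduce Corollary~\ref{cor:1bis} directly from Theorem~\ref{prop:CVrandom} applied to the stationary potential $f(x,\omega):=\zeta_\eta(x)=\sum_{k\in\Z^d}X^\eta_k\zeta(x-k)$, so the only real work is to check the hypotheses and to compute the quantity $\sup_{x\in\R^d}\E[\zeta_\eta(x,\cdot)]$.

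First I would verify that $\zeta_\eta$ satisfies \eqref{takis11}. Nonnegativity is immediate since $\zeta\ge 0$ and $X^\eta_k\in\{0,1\}$. Stationarity $\zeta_\eta(x+k,\omega)=\zeta_\eta(x,\tau_k\omega)$ holds for the canonical shift action on the product Bernoulli space $\Omega=\{0,1\}^{\Z^d}$ with $X^\eta_k(\omega)=\omega_k$; this action is measure preserving because the $X^\eta_k$ are i.i.d., and ergodic by Kolmogorov's zero-one law. Continuity in $x$ uniformly in $\omega$ follows from the Lipschitz continuity and compact support of $\zeta$: since $\supp(\zeta)\subset Q_K$ for some $K$, for each fixed $x$ only finitely many (at most $(K+1)^d$, say) of the translates $\zeta(x-k)$ are nonzero, so $\zeta_\eta(\cdot,\omega)$ is, locally, a finite sum of uniformly Lipschitz functions, with Lipschitz constant bounded by $(K+1)^d\,\mathrm{Lip}(\zeta)$ independently of $\omega$.

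Next I would compute the right-hand side of the estimate in Theorem~\ref{prop:CVrandom}. Since the $X^\eta_k$ are independent of the (deterministic) values $\zeta(x-k)$ and $\E[X^\eta_k]=\eta$ for every $k$, linearity of expectation gives
\be
\E[\zeta_\eta(x,\cdot)]=\sum_{k\in\Z^d}\E[X^\eta_k]\,\zeta(x-k)=\eta\sum_{k\in\Z^d}\zeta(x-k)=\eta\,\zeta_1(x),
\ee
where $\zeta_1$ is the $\Z^d$-periodic function of \eqref{takis8} with $R=1$; the sum converges because $\zeta$ has compact support. Hence $\sup_{x\in\R^d}\E[\zeta_\eta(x,\cdot)]=\eta\,\|\zeta_1\|_\infty$, and $\|\zeta_1\|_\infty\le(K+1)^d\|\zeta\|_\infty$ is a constant depending only on $\zeta$. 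Plugging this into Theorem~\ref{prop:CVrandom} yields
\be
0\le\overline H-\overline H_\eta\le \eta\,\|\zeta_1\|_\infty\lesssim\eta\qquad\text{for all }\eta\in(0,1),
\ee
which is the claim.

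There is essentially no obstacle here: the statement is a clean corollary, and the only point requiring a line of care is the justification that $\zeta_\eta$ meets the uniform-continuity-in-$x$ requirement of \eqref{takis11} and that the defining series converges pointwise — both of which are handled by the compact support of $\zeta$. One should also note in passing that the existence and characterization of $\overline H_\eta$ via the discounted problem is exactly the content recalled before Theorem~\ref{prop:CVrandom} (citing \cite{S}), so nothing new is needed on that front.
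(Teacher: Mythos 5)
Your argument is exactly the intended one: the paper states that Corollary~\ref{cor:1bis} follows ``immediately'' from Theorem~\ref{prop:CVrandom}, and your write-up simply makes this explicit by verifying \eqref{takis11} and computing $\E[\zeta_\eta(x,\cdot)]=\eta\sum_{k}\zeta(x-k)$, whose sup is $\lesssim\eta$ since $\zeta$ is bounded with compact support. This is correct and matches the paper's approach.
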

We continue with the:
\begin{proof}[Proof of Theorem~\ref{prop:CVrandom}.] 
Using the notation and strategy of the proof of Theorem~\ref{prop:CValmostp}, we have  
$$
\begin{array}{r}
\ds \delta \int_{Q_N}v^{\delta,\ep}\tilde \sigma^\ep + \int_{\partial Q_N} \lg-D(v^{\delta,\ep}-\chi_\ep)+(v^{\delta,\ep}-\chi^\ep)D_pH(D\chi^{\ep},x)), \nu\rg \tilde \sigma^\ep \\
\ds \leq \int_{Q_N} f(x,\omega)\tilde \sigma^\ep(x) dx - \overline H^\ep N^d.
\end{array}
$$
Note that the maps $x\mapsto \E[v^{\delta,\ep}(x)]$ and $x\mapsto \E[ f(x)]$ are  $\Z^d-$periodic. Hence taking expectation, dividing by $N^d$ and letting $N\to+\infty$ in the above inequality, we find
$$
\int_Q \delta \E[v^{\delta,\ep}] \tilde \sigma^\ep 
\leq \int_Q\E[ f]\tilde \sigma^\ep -\overline H^\ep,
$$
and, letting  first $\ep\to0$ and then $\delta \to0$, we obtain 
$$
\overline H-\overline  H_\eta  \leq  \sup_x \E[ f(x,\cdot)].
$$
The  inequality $\overline H\geq \overline H_\eta$ is an immediate consequence of the comparison principle. 
\end{proof}

\section{Sharper convergence  for the periodic perturbation}\label{sec:periodic}

We revisit the periodic perturbation  example we discussed in the introduction. Given $H$, the periodically perturbed Hamiltonian  $H_R$ is defined by \eqref{takisHR} for  some large $R\in \N$ with $\zeta_R$ as in  \eqref{takis8} and $\zeta$ satisfying \eqref{takiszeta}. 
\smallskip

Let $\overline H$ and $\overline H_R$ be the effective constants associated with $H$ and $H_R$ respectively. In view of Corollary \ref{cor:1}, we know that   $R^{d}(\overline H_R-\overline H)$ is bounded. Here we show that, under suitable assumptions on the unperturbed problem, this quantity has a limit. 
\smallskip

The asymptotic result is stated next. Notice that claim depends nontrivially on the dimension.

\begin{thm}\label{them:main} Assume  \eqref{takis5}, \eqref{takis6} and \eqref{takisadditional}.  If $d\geq 2$,
$$\lim_{R\to+\infty} R^d(\overline H_R-\overline H)=0.
$$
If $d=1$ and, in addition, the $R\Z^d$-periodic Hamiltonian $H_R$ also satisfies \eqref{takisadditional}, then 
$$
\lim_{R\to+\infty} R(\overline H_R-\overline H)=   -(\int_{Q} \frac{1}{D_pH(\chi'(x), x)} dx)^{-1}\int_{\text{sppt}(\zeta)} \left(H^{-1}(\zeta(x) + \overline H, x) - H^{-1}(\overline H, x) \right)dx.$$
\end{thm}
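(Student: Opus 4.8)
The plan is to treat the two cases separately, deriving everything from the variational (Lax--Oleinik) representations of $\chi$ and $\chi_R$ and the invariant-measure identity \eqref{takisinvariant}. For the lower bound on $R^d(\overline H_R-\overline H)$ in both dimensions I would start from the key inequality \eqref{ineq:keykey}, which I take as established in Lemma~\ref{lem:keykey}: writing $w:=\chi_\infty-\chi$ (a subsequential locally-uniform limit of renormalized $\chi_R$'s, solving \eqref{takis2}), one has
$$
0\ \geq\ \liminf_{R\to+\infty} R^d(\overline H_R-\overline H)\ \geq\ -\int_{\R^d}\lg D_pH(D\chi,x),Dw(x)\rg\,d\tilde\sigma(x).
$$
Since $\tilde\sigma$ is invariant, i.e. $\mathrm{div}(\tilde\sigma D_pH(D\chi,\cdot))=0$, the integral on the right is formally a ``boundary term at infinity'': integrating over $Q_N$ and letting $N\to\infty$, it equals a limit of fluxes $\int_{\partial Q_N}w\,\lg\tilde\sigma D_pH(D\chi,x),\nu\rg$. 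The whole point is to show this flux vanishes, and this is where the dimension enters.

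For $d\ge2$ I would invoke Corollary~\ref{cor:cor}: there is a constant $c$ with $\chi+c\le\chi_\infty\le$ (something that decays to $\chi+c$ along $+e$), $\chi_\infty=\chi+c$ on the set $\mathcal O$ where the unperturbed optimal orbit avoids $\mathrm{sppt}(\zeta)$, $\chi_\infty=\chi+c$ on the half-space $\{\lg x,e\rg\ge K\}$, and $\chi_\infty-\chi\le c+\ep$ on $\{\lg x,e\rg\le -K_\ep\}$. Replacing $w$ by $w-c$ (harmless, since $D_pH(D\chi,\cdot)$ is divergence-free against $\tilde\sigma$, so constants integrate to $0$), the flux at infinity only sees $\{\lg x,e\rg\le -K_\ep\}$, where $0\le w-c\le\ep$; combined with the fact that the ``bad'' set where $w\ne\mathrm{const}$ is contained in a cylinder of the form $\{x:\ \mathrm{dist}(x,\R e)\le K'\}$ — a set of codimension $d-1\ge1$ — the contribution of $\partial Q_N$ to the flux is $O(\ep)$ uniformly in $N$. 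Letting $\ep\to0$ gives that the right-hand side of \eqref{ineq:keykey} is $0$, hence $R^d(\overline H_R-\overline H)\to0$. Here the geometric input is that in $d\ge2$ the optimal trajectories, having a fixed irrational rotation vector $e$, eventually escape any fixed compact set and stay a bounded distance from the line $\R e$, so a single compactly supported bump is ``invisible'' to the flux. The rigorous version of the cylinder statement is exactly Corollary~\ref{cor:cor}(i)--(iv), and I would only need to combine it with the divergence-free property; I expect the honest estimate of the boundary integral over the thin cylinder $\partial Q_N\cap\{\mathrm{dist}(x,\R e)\le K'\}$ (whose $\Hm$-measure is $O(N^{d-2})$, hence negligible after dividing by nothing — one must be careful that $\tilde\sigma D_pH$ is bounded and $w$ bounded) to be the one place requiring attention.

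For $d=1$ everything is explicit and the argument is direct rather than via $\chi_\infty$. Using the ODE form \eqref{takisode}, $\chi'=H^{-1}(\overline H,x)$, and the analogue for $H_R$, namely $\chi_R'=H^{-1}(\overline H_R+\zeta_R(x),x)$ on $Q_R$ (valid once $\overline H_R$ also satisfies \eqref{takisadditional} so the corrector is $C^1$ and stays on one branch), I would integrate the periodicity constraints: $\int_{Q_R}\chi_R'=0$ gives
$$
\int_{Q_R}H^{-1}\big(\overline H_R+\zeta_R(x),x\big)\,dx=0,\qquad \int_{Q_R}H^{-1}(\overline H,x)\,dx=0
$$
(the second by \eqref{takis41}). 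Subtracting and using that $\zeta_R=\zeta$ on $Q_R$ for $R$ large with $\mathrm{sppt}(\zeta)\subset Q_R$,
$$
\int_{Q_R}\Big[H^{-1}\big(\overline H_R+\zeta(x),x\big)-H^{-1}(\overline H,x)\Big]dx=0.
$$
Now split the integral over $\mathrm{sppt}(\zeta)$ and its complement. On the complement, $\zeta\equiv0$, and since $\overline H_R-\overline H=O(R^{-1})$ I Taylor-expand $H^{-1}(\overline H_R,x)-H^{-1}(\overline H,x)=D_rH^{-1}(\overline H,x)(\overline H_R-\overline H)+O(R^{-2})$, so that piece is $(\overline H_R-\overline H)\int_{Q_R\setminus\mathrm{sppt}(\zeta)}D_rH^{-1}(\overline H,x)\,dx+O(R^{-1})$; by periodicity $\int_{Q_R}D_rH^{-1}(\overline H,x)\,dx=R\int_Q D_rH^{-1}(\overline H,x)\,dx=R\int_Q \frac{dx}{D_pH(\chi'(x),x)}$ using \eqref{takis42ter}, and dropping $\mathrm{sppt}(\zeta)$ changes this by $O(1)$. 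On $\mathrm{sppt}(\zeta)$ I replace $\overline H_R$ by $\overline H$ at the cost of $O(R^{-1})$, getting $\int_{\mathrm{sppt}(\zeta)}\big(H^{-1}(\zeta(x)+\overline H,x)-H^{-1}(\overline H,x)\big)dx+O(R^{-1})$. Plugging into the identity, multiplying by $R$, and solving for $R(\overline H_R-\overline H)$ yields exactly
$$
\lim_{R\to\infty}R(\overline H_R-\overline H)=-\Big(\int_Q\frac{dx}{D_pH(\chi'(x),x)}\Big)^{-1}\int_{\mathrm{sppt}(\zeta)}\big(H^{-1}(\zeta(x)+\overline H,x)-H^{-1}(\overline H,x)\big)\,dx.
$$

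The main obstacle I anticipate is the $d\ge2$ flux-vanishing step: turning the soft statement ``the bump is invisible to trajectories with irrational rotation vector'' into the quantitative claim that $\int_{\partial Q_N}(w-c)\lg\tilde\sigma D_pH(D\chi,x),\nu\rg\,d\Hm\to0$. This rests entirely on the structural Corollary~\ref{cor:cor} — in particular on the coincidence $\chi_\infty=\chi+c$ outside a cylinder and the one-sided decay $\chi_\infty-\chi\le c+\ep$ on the far side — whose proof is deferred to the end of Section~4 and itself uses Lemma~\ref{lem:avoid} and Corollary~\ref{cor:takis} (that bounded-action trajectories track $e$ and, once $\lg e,\gamma(0)\rg$ is large, avoid $\mathrm{sppt}(\zeta)$). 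A secondary technical point, needed to make \eqref{ineq:keykey} rigorous in the first place, is passing the linearization-plus-integration-against-$\tilde\sigma$ argument through the lack of smoothness of $H$, $\chi$, $\chi_R$; this is handled (as in the proof of Theorem~\ref{prop:CValmostp}) by the viscous regularization $\chi^\ep,\tilde\sigma^\ep$ and then $\ep\to0$, and I would reuse that scheme verbatim. For $d=1$ there is no real obstacle beyond bookkeeping the $O(R^{-1})$ error terms and justifying that $H_R$'s corrector stays on the same branch, which is the reason the extra hypothesis ``$H_R$ satisfies \eqref{takisadditional}'' is imposed.
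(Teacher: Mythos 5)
Your $d=1$ argument is essentially the paper's: both start from the ODE form and the normalization $\int_{Q_R}\chi_R'=0$, subtract the two balance identities, and Taylor-expand in the $O(R^{-1})$ quantity $S_R=\overline H_R-\overline H$; the slightly different grouping (you expand around $\overline H$, the paper around $\zeta_R+\overline H$) is immaterial bookkeeping, and both produce the same limit.

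For $d\ge2$, there is a genuine gap, and in fact two intertwined problems. First, your plan is circular. You propose to derive the vanishing of the limit from Corollary~\ref{cor:cor}; but the proof of Corollary~\ref{cor:cor} (given after Theorem~\ref{them:main}) opens with ``Let $c$ be the common value of $c^\pm$; recall that in the proof of Theorem~\ref{them:main} we showed that $c^+=c^-$.'' The single constant $c$ appearing in all four items of Corollary~\ref{cor:cor}, and in particular the two-sided statement (iii)--(iv) that $\chi_\infty-\chi$ tends to the \emph{same} constant in both directions along $e$, is precisely what the $d\ge2$ proof of Theorem~\ref{them:main} establishes. What is available \emph{before} that proof is only Lemma~\ref{lem:nondec} (existence of $c^\pm(x)$) and Lemma~\ref{lem:key} (that $c^\pm$ are constants with $c^-\le c^+$). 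Equality $c^+=c^-$ is the output, not the input, so Corollary~\ref{cor:cor} cannot be used as a black box here.

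Second, the flux computation you sketch is the formal version \eqref{ineq:keykey}, which the paper explicitly flags as heuristic. The rigorous lower bound, Lemma~\ref{lem:keykey}, is in terms of the discrete flow difference $\big[(\chi_\infty-\chi)(\bar\gamma^x(s))\big]_0^t$, whose $\tilde\sigma$-integrability over $\R^d$ is shown by expressing it as a nonnegative quantity $\xi$ minus a compactly supported one; the integral $\int_{\R^d}\lg D_pH(D\chi,\cdot),Dw\rg\,d\tilde\sigma$ is not absolutely convergent (the set where $Dw\ne0$ is, a priori, a half-infinite tube of bounded cross-section, and the integrand is merely bounded there), so the divergence-theorem boundary term at $\partial Q_N$ does not obviously have a limit. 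The paper sidesteps this by never differentiating: it integrates $\big[(\chi_\infty-\chi)(\bar\gamma^x(s))\big]_0^{T_0}$ over $Q_r$, restricts to the tube $\tilde F$ of orbits through $Q_r$ using an Egoroff argument, partitions $\tilde F$ into flow-time slices $E(k)$, and exploits invariance of $\tilde\sigma$ under the flow to telescope the sum down to $\int_{E(T+1)}(\chi_\infty-\chi)\,d\tilde\sigma-\int_{E(-T-1)}(\chi_\infty-\chi)\,d\tilde\sigma\ge(c^+-c^-)\tilde\sigma(E(0))-C\ep$. Combined with $0\ge I$ this yields both $I=0$ and $c^+=c^-$ simultaneously. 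That telescoping/ergodic-partition argument is the missing core of the $d\ge2$ case; to make your outline into a proof you would need to reproduce it (or an equivalent) rather than cite Corollary~\ref{cor:cor}.
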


The result when $d\geq 2$ is surprising. Indeed, as discussed in the introduction, the variational representations of $\chi_R$ and $\chi$ suggest that we should have  $\overline H\approx \overline H_R-R^{-d}\int \zeta d\tilde \sigma$ with  $\int \zeta d\tilde \sigma$ positive because $\tilde \sigma$ has a full support.  The claim in the Theorem \ref{them:main} contradicts this intuition, since it implies that the optimal trajectories of the perturbed problem avoid the obstacles. \\
Of course, when $d=1$ the optimal trajectories have no room to escape and need to go through the bumps.
\smallskip

We present first the proof of the result for $d=1$, which  is rather straightforward and is based  on the  exact formulae which are available in view of the assumptions. Then we move  to the higher dimensional setting, which is more complicated and requires considerable more tools and work. 
\smallskip

\subsection*{The problem in one dimension} We present here the:
\begin{proof}[Proof of Theorem \ref{them:main} when $d=1$] For $R\in \N,$ we consider the cell problems \eqref{cp} and \eqref{cellpbHR}. In view of the assumptions, the problems have $C^{1,1}$, with bounds independent of $R$,  solutions $\chi$ and $\chi_{R}$,  which are respectively $\Z$- and $R\Z$- periodic.
\smallskip

Following the discussion in the subsection about the one-dimensional  problem,  we can also rewrite \eqref{cellpbHR}
as the ode 
\be\label{takisode1}
\chi '_{R}=H^{-1}(\zeta_R(x) +\overline H_R, x) \ \text{ in } \ \R,
\ee 
together with the condition 
\be\label{takis43}
\int_{Q_R} H^{-1}(\zeta_R(x) +\overline H_R, x) dx=0,
\ee
where $r \mapsto H^{-1}(r, x)$ is the same branch of the inverse of $H$ we used for \eqref{takisode}.
\smallskip

Let $S_R:=\overline H_R - \overline H$ and recall that, in view of  Corollary~\ref{cor:1}, $ 0 \leq R(-S_R)\leq C$.  
\smallskip

We combine  \eqref{takis41} and \eqref{takis43}  as 
\begin{equation}\label{takis40}
\int_{Q_R} \left( H^{-1}(\zeta_R(x) +\overline H_R, x) - H^{-1}(\overline H, x) \right)dx=0, 
\end{equation}
we rewrite it as 
\be\label{takis45}
\text{(I)}_R +\text{(II)}_R=0,
\ee
with 
$$\text{(I)}_R:= \int_{Q_R} \left(H^{-1}(\zeta_R(x) + S_R + \overline H, x) -  H^{-1}(\zeta_R(x) + \overline H, x)\right)dx,
 $$
and
$$\text{(II)}_R:=\int_{Q_R} \left(H^{-1}(\zeta_R(x) + \overline H, x) - H^{-1}(\overline H, x) \right)dx,$$
and we study each term separately.
\smallskip

The strict convexity of $H$ and the bound on $S_R$ yield 
$$ |H^{-1}(\zeta_R(x) + S_R + \overline H, x) -  H^{-1}(\zeta_R(x) + \overline H, x) - D_r H^{-1}(\zeta_R(x)  +\overline H, x)S_R| \lesssim (S_R)^2\lesssim R^{-2}$$  

Then
$$|\text{(I)}_R - \frac{1}{R}\int_{Q_R}  D_r H^{-1}(\zeta_R(x)  +\overline H, x)[RS_R] dx|\lesssim |Q_R||S_R|^2\lesssim   R^{-1}.
$$
Since $R$ is large and $\zeta$ has compact support, in view of the definition of $\zeta_R$, there is only one bump in $Q_R$. Hence, using  \eqref{takis42ter}, we get 
$$
\begin{array}{l}
\ds \int_{Q_R}  D_r H^{-1}(\zeta_R(x)  +\overline H, x) dx = 
 \int_{Q_R}  \left(D_r H^{-1}(\zeta (x)  +\overline H, x)  - D_r H^{-1}(\overline H, x) \right)dx  +
 \\[4.5mm] 
 \ds \int_{Q_R}  D_r H^{-1}(\overline H, x) dx= R \int_{Q}  D_r H^{-1}(\overline H, x) dx +  (\text{III})_R=
 R (\int_{Q} \frac{1}{D_pH(\chi'(x), x)} dx) + (\text{III})_R,
\end{array}
$$
with  
\[ (\text{III})_R:= \int_{Q_R} \left(D_r H^{-1}(\zeta(x)  +\overline H, x)  - D_r H^{-1}(\overline H, x) \right)dx. \] 
It is now immediate that 
\[ \left|(\text{III})_R\right| =\left|\int_{\text{sppt}(\zeta)}  \left(D_r H^{-1}(\zeta(x)  +\overline H, x)  - D_r H^{-1}(\overline H, x) \right)dx \right|\lesssim 1.\]
Collecting all the previous information we find 
\be\label{takis44}
\left|\text{(I)}_R -(\int_{Q} \frac{1}{D_pH(\chi'(x), x)} dx) RS_R\right|\lesssim R^{-1}.
\ee 
Turning out attention to $\text{(II)}_R$ we observe that, since $\zeta$ has compact support, 
\[\text{(II)}_R=\int_{\text{sppt}(\zeta)} \left(H^{-1}(\zeta(x) + \overline H, x) - H^{-1}(\overline H, x) \right)dx.\]
The claim now follows. 
\qed

\smallskip
\subsection*{The multi-dimensional problem}
The main tool  of the proof when $d\geq 2$ is  the perturbed corrector $\chi_\infty$, that is a solution to \eqref{takis2}, which, as it turns out, 
keeps tracks of the difference between $\overline H_R$ and $\overline H$; see  Lemma \ref{lem:keykey} below. The core of the argument consists in showing that the difference $\chi_\infty-\chi$ tends to a constant at infinity. This statement relies heavily on the assumption that the projected invariant measure has a full support (see Lemma \ref{lem:key} and its proof). 
\begin{lem}\label{lem:keykey} For any $t>0$, the map $x \mapsto   \left[ (\chi_\infty-\chi)(\bar \gamma^x(s)\right]_0^t$ belongs to $L^1_{\tilde \sigma}(\R^d)$ and
$$
\liminf_{R\to +\infty} R^d(\overline H_R-\overline H) \geq t^{-1} \int_{\R^d} \left[ (\chi_\infty-\chi)(\bar \gamma^x(s))\right]_0^t d \tilde \sigma(x).
$$
\end{lem}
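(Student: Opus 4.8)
The plan is to extract a quantitative lower bound for $\overline H_R - \overline H$ by testing the variational formula for $\chi_R$ against the optimal trajectory $\bar\gamma^x$ of the unperturbed problem, and then averaging against the invariant measure $\tilde\sigma$. Starting from the two variational (dynamic programming) identities recalled in the introduction,
$$
\chi_R(x) \le \int_0^t\bigl(L(\dot{\bar\gamma}^x(s),\bar\gamma^x(s)) + \overline H_R + \zeta_R(\bar\gamma^x(s))\bigr)\,ds + \chi_R(\bar\gamma^x(t)),
$$
$$
\chi(x) = \int_0^t\bigl(L(\dot{\bar\gamma}^x(s),\bar\gamma^x(s)) + \overline H\bigr)\,ds + \chi(\bar\gamma^x(t)),
$$
I would subtract the second from the first. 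The $\int_0^t L$ terms cancel, and (for $R$ large, so that only the single bump sits in a neighborhood of $Q_R$, and using that $\bar\gamma^x$ for most $x$ is far from $\mathrm{sppt}(\zeta)$) the $\zeta_R$ term is small; more precisely one gets
$$
t(\overline H_R-\overline H) \ge \bigl[(\chi_\infty-\chi)(\bar\gamma^x(s))\bigr]_0^t - \bigl[(\chi_\infty-\chi_R)(\bar\gamma^x(s))\bigr]_0^t - \int_0^t \zeta_R(\bar\gamma^x(s))\,ds,
$$
after adding and subtracting $\chi_\infty$. The first term is the quantity of interest; the middle term goes to $0$ as $R\to\infty$ by local uniform convergence $\chi_R\to\chi_\infty$; the last term must be controlled in $\tilde\sigma$-average.

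Next I would integrate the inequality in $x$ against $\tilde\sigma$ over $Q_R$ (equivalently $[-R/2,R/2)^d$, using the periodic extension of $\tilde\sigma$), divide by $t$, and exploit the invariance of $\tilde\sigma$ under the flow $x\mapsto\bar\gamma^x(t)$, i.e. \eqref{takisinvariant}/\eqref{RotNumber}. Invariance is what makes $\int_{Q_R}\bigl[(\chi_\infty-\chi)(\bar\gamma^x(s))\bigr]_0^t\,d\tilde\sigma(x)$ depend only on the behavior of $\chi_\infty-\chi$ ``at infinity'' rather than on a bulk integral of $\zeta\,d\tilde\sigma$ — this is precisely the phenomenon flagged after \eqref{ineq:keykey}. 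For the $\zeta_R$ term, the invariance of $\tilde\sigma$ gives $\int_{Q_R}\int_0^t\zeta_R(\bar\gamma^x(s))\,ds\,d\tilde\sigma(x) = t\int_{Q_R}\zeta_R\,d\tilde\sigma = t\int_{\mathrm{sppt}(\zeta)}\zeta\,d\tilde\sigma$, a fixed constant; after multiplying through by $R^d$ and dividing by the total mass $R^d$ of $\tilde\sigma$ on $Q_R$, this contributes $O(R^{-d})\to 0$. Hence in the limit $R\to\infty$,
$$
\liminf_{R\to\infty} R^d(\overline H_R-\overline H) \ge t^{-1}\int_{\R^d}\bigl[(\chi_\infty-\chi)(\bar\gamma^x(s))\bigr]_0^t\,d\tilde\sigma(x).
$$

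For the $L^1_{\tilde\sigma}$ claim I would argue that $\chi_\infty-\chi$ is globally bounded (both solve coercive cell-type problems with the same $\overline H$, and the uniform Lipschitz and sup bounds from the Lemma pass to $\chi_\infty$), hence $\bigl[(\chi_\infty-\chi)(\bar\gamma^x(s))\bigr]_0^t$ is bounded uniformly in $x$; since $\tilde\sigma$ restricted to $Q$ is a probability measure (and $\R^d$-periodic), the integrand is trivially $L^1_{\tilde\sigma}$ on $Q$, and the displayed integral over $\R^d$ should be read as $\int_Q$ of the $\Z^d$-periodization, which is where the invariance and periodicity of $\tilde\sigma$ combine to make the bump contribution vanish in the rescaled limit. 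The main obstacle I anticipate is the careful bookkeeping of the $\zeta_R$ term and the error term $\bigl[(\chi_\infty-\chi_R)(\bar\gamma^x(s))\bigr]_0^t$ after multiplication by $R^d$: one only has \emph{local} uniform convergence $\chi_R\to\chi_\infty$, so one must localize — use Corollary~\ref{cor:takis} (the optimal trajectories satisfying the energy bound \eqref{condgamma} have controlled displacement, and for $\langle e,x\rangle$ large they avoid $\mathrm{sppt}(\zeta)$ entirely) to confine the analysis to a region of $x$'s of $\tilde\sigma$-measure $O(R^{-d})$ where $\bar\gamma^x$ comes near the bump, and to show the remaining contributions are genuinely negligible after the $R^d$-rescaling. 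Justifying that $\chi_\infty - \chi_R \to 0$ with a rate good enough to survive multiplication by $R^d$ on that small set, via the uniform Lipschitz bounds and the ode/comparison structure, is the delicate point.
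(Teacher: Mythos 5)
Your outline has a genuine gap, and also a sign error that is a symptom of it. Let me take them in turn.

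\emph{Sign.} Starting from
$$
\chi_R(x) \le \int_0^t\bigl(L(\dot{\bar\gamma}^x,\bar\gamma^x) + \overline H_R + \zeta_R(\bar\gamma^x)\bigr)ds + \chi_R(\bar\gamma^x(t)),
\qquad
\chi(x) = \int_0^t\bigl(L(\dot{\bar\gamma}^x,\bar\gamma^x) + \overline H\bigr)ds + \chi(\bar\gamma^x(t)),
$$
subtraction gives
$$
t(\overline H_R-\overline H) \;\geq\; -\bigl[(\chi_R-\chi)(\bar\gamma^x(s))\bigr]_0^t - \int_0^t\zeta_R(\bar\gamma^x(s))\,ds
\;=\; -\bigl[(\chi_\infty-\chi)\bigr]_0^t + \bigl[(\chi_\infty-\chi_R)\bigr]_0^t - \int_0^t\zeta_R\,ds,
$$
with a \emph{minus} in front of $\bigl[(\chi_\infty-\chi)\bigr]_0^t$, not a plus as you wrote. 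So your inequality does not even point toward the statement you want.

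\emph{The real obstruction.} Even correcting the sign, this route cannot close, because the bump term does not vanish after rescaling. Integrating over $Q_R$ against $\tilde\sigma$ (total mass $R^d$) and using the invariance gives $tR^d(\overline H_R-\overline H)$ on the left, while the $\zeta_R$ contribution on the right is $-t\int_{\text{sppt}(\zeta)}\zeta\,d\tilde\sigma$, a fixed $O(1)$ negative constant; it is \emph{not} $O(R^{-d})$. (Your ``divide by the total mass $R^d$'' step would delete the $R^d$ prefactor entirely and prove a statement about $\overline H_R-\overline H$, not $R^d(\overline H_R-\overline H)$.) The paper's proof does not need this term to be small: it uses the \emph{exact} identity obtained from the infimum formulas for \emph{both} $\chi_R$ and $\chi$, and — crucially — also invokes the variational formula \eqref{takisoptimalinfty} for the one-bump corrector $\chi_\infty$. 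Passing to the limit via Fatou (the integrand $\xi(x)$ is nonnegative because $\bar\gamma^x$ is a competitor in the infimum defining $\chi_\infty$), one finds $\xi(x)=\bigl[(\chi_\infty-\chi)(\bar\gamma^x(s))\bigr]_0^t + \int_0^t\zeta(\bar\gamma^x(s))ds$; the $+\int_0^t\zeta$ inside $\xi$ cancels \emph{exactly} against the $-\int_{\R^d}\int_0^t\zeta\,ds\,d\tilde\sigma$ produced by the occupation of the single bump. Your scheme never generates this compensating positive term, because you only test $\chi_R$ along $\bar\gamma^x$ and add/subtract $\chi_\infty$ algebraically, without using its own dynamic-programming characterization. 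Relatedly, your worry about needing a quantitative rate of convergence $\chi_R\to\chi_\infty$ to survive multiplication by $R^d$ is spurious: the paper uses only pointwise convergence plus Fatou on a nonnegative integrand, and the $R^d$ appears solely as the $\tilde\sigma$-mass of $Q_R$ in the exact identity, never as a factor needing to be beaten by a decay rate.

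\emph{Integrability.} Your claim that $\bigl[(\chi_\infty-\chi)(\bar\gamma^x(s))\bigr]_0^t \in L^1_{\tilde\sigma}(\R^d)$ ``trivially because it is bounded'' does not work: $\tilde\sigma$ is the periodic extension, with $\tilde\sigma(\R^d)=\infty$, so boundedness alone gives nothing. The paper obtains integrability precisely from Fatou's lemma on the nonnegative $\xi$, showing $\xi\in L^1_{\tilde\sigma}$, and then subtracting the compactly supported (hence integrable) $\int_0^t\zeta(\bar\gamma^x)ds$. This step is unavoidable, and your sketch omits it.

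The key missing idea, in one sentence: you must use the dynamic-programming identity for $\chi_\infty$ (with its $\zeta$ running cost) in addition to those for $\chi_R$ and $\chi$, so that the bump's $O(1)$ contribution to the $\tilde\sigma$-average cancels exactly rather than needing to decay.
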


\begin{proof} 
The variational representation formulae for viscosity solutions to convex Hamilton-Jacobi equations give, for any $t>0$,  
$$
\chi_R(x)=
\inf_{{\mathcal A}_x} \left[ \int_0^t (L(\dot \gamma(s), \gamma(s)) +\overline H_R+\zeta_R(\gamma(s))) \ ds + \chi_R(\gamma(t))\right],
$$
and 
\be\label{takisoptimalinfty}
\chi_\infty(x)=
\inf_{{\mathcal A}_x}\left[ \int_0^t (L(\dot \gamma(s), \gamma(s)) +\overline H+\zeta(\gamma(s))) \ ds + \chi_\infty(\gamma(t))\right],
\ee
where ${\mathcal A}_x$ is the set of Lipschitz curves $\gamma: [0,\infty) \to \R^d$ such that $\gamma(0)=x$,
and 
\[\chi(x)= \int_0^t (L(\dot{\bar \gamma}^x(s), \bar \gamma^x(s)) +\overline H) \ ds + \chi(\bar \gamma^x(t));\]
recall that  $L$ is defined by \eqref{def:L}, while the last equality is \eqref{takisoptimal}.
\smallskip

Hence, as in the introduction, 
$$
\begin{array}{rl}
\ds t(\overline H_R-\overline H) \;= & \ds  -\inf_{{\mathcal A}_x} \left[ \int_0^t (L(\dot \gamma(s), \gamma(s))+\zeta_R(\gamma(s))) \ ds + \chi_R(\gamma(t))\right] +\chi_R(x)\\[2.5mm]
& \qquad \ds + \int_0^t L(\dot{\bar \gamma}^x(s), \bar \gamma^x(s)) \ ds + \chi(\bar \gamma^x(t))- \chi(x,
\end{array}
$$
and, after integrating  over $Q_R$ with respect to the measure $\tilde \sigma$,
$$
\begin{array}{rl}
\ds tR^d (\overline H_R-\overline H) \;= & \ds  \int_{Q_R} \left\{ -\inf_{{\mathcal A}_x}\left[ \int_0^t (L(\dot \gamma(s), \gamma(s))+\zeta_R(\gamma(s))) \ ds + \chi_R(\gamma(t))\right] +\chi_R(x) \right. \\
& \qquad \ds \left. + \int_0^t L(\dot{\bar \gamma}^x(s), \bar \gamma^x(s)) \ ds - \chi(\bar \gamma^x(t))+ \chi(x)  \; \right\}\ d\tilde \sigma(x).
\end{array}
$$
Since the map $x\mapsto \bar \gamma^x(t)$ leaves the measure $\tilde \sigma$ invariant (on the torus) and $\chi$ and $\chi_R$ are respectively  $\Z^d$- and $R\Z^d$- periodic, for $R\in \N$, we have 
$$
\int_{Q_R} \chi(x) \ d\tilde \sigma(x) = \int_{Q_R}\chi(\bar \gamma^x(t))\ d\tilde \sigma(x) \ \ \text{and} \  \ \int_{Q_R} \chi_R(x) \ d\tilde \sigma(x) = \int_{Q_R}\chi_R(\bar \gamma^x(t))\ d\tilde \sigma(x). 
$$
Therefore 
$$
\begin{array}{rl}
\ds tR^d (\overline H_R-\overline H) \;= & \ds -\int_{Q_R} \int_0^t \zeta_R(\bar \gamma^x(s)) \ ds d\tilde \sigma(x)  \\
& \qquad \ds + \int_{Q_R} \left\{ -\inf_{{\mathcal A}_x}\left[ \int_0^t (L(\dot \gamma(s), \gamma(s))+\zeta_R(\gamma(s))) \ ds + \chi_R(\gamma(t))\right] \right. \\
& \qquad \qquad \ds \left. + \left[\int_0^t (L(\dot{\bar \gamma}^x(s), \bar \gamma^x(s))+\zeta_R(\bar \gamma^x(s)) ) \ ds + \chi_R(\bar \gamma^x(t))\right]  \ \right\} d\tilde \sigma(x).
\end{array}
$$
We now discuss the behavior, as $R\to+\infty$, of the two integrals in the righthand side of the equality above.  
\smallskip

Recall that   $\dot {\bar \gamma}^x$ is  uniformly bounded.   Therefore,  $\bar \gamma^x(s)$ does not see the bumps $\zeta(\cdot-kR)$ for $k\in \Z^d\backslash\{0\}$ as soon as $x\in Q_R$, $s\in [0,t]$ and $R$ large enough with respect to $t$. Thus, for sufficiently large $R$,  
$$
\int_{Q_R} \int_0^t \zeta_R(\bar \gamma^x(s)) \ ds d\tilde \sigma(x) = 
\int_{\R^d} \int_0^t \zeta(\bar \gamma^x(s)) \ ds d\tilde \sigma(x).
$$
For the second integral, we note that the integrand is nonnegative.  Using  Fatou's Lemma  and the convergence of $\chi_R$ to $\chi_\infty$, we find  
$$
\begin{array}{l}
\ds 0\geq t\liminf_{R\to+\infty} R^d (\overline H_R-\overline H) \;\geq \; \ds- \int_{\R^d} \int_0^t \zeta(\bar \gamma^x(s)) \ ds d\tilde \sigma(x)  \\
\qquad \qquad  \ds  +\int_{\R^d} \left\{- \inf_{{\mathcal A}_x}\left[ \int_0^t (L(\dot \gamma(s), \gamma(s))+\zeta(\gamma(s))) \ ds + \chi_\infty(\gamma(t))\right] \right. \\
\qquad \qquad  \quad \ds \left. +\left[ \int_0^t (L(\dot{\bar \gamma}^x(s), \bar \gamma^x(s))+\zeta(\bar \gamma^x(s)) ) \ ds + \chi_\infty(\bar \gamma^x(t)) \right] \right\} d\tilde \sigma(x),
\end{array}
$$
which, in view of the nonnegativity of the integrand in the second integral, yields the $\tilde \sigma$ integrability of   the map  
$$
\begin{array}{rl}
\ds x \mapsto \xi(x):= & \ds  - \inf_{{\mathcal A}_x}\left[ \int_0^t (L(\dot \gamma(s), \gamma(s))+\zeta(\gamma(s))) \ ds + \chi_\infty(\gamma(t))\right]
\\
& \ds \qquad 
+\left[ \int_0^t (L(\dot{\bar \gamma}^x(s), \bar \gamma^x(s))+\zeta(\bar \gamma^x(s)) ) \ ds + \chi_\infty(\bar \gamma^x(t)) \right]. 
\end{array}
$$

It follows from  \eqref{takisoptimalinfty} and \eqref{takisoptimal} that 
$$
\begin{array}{rl}
\ds \xi(x)\; = & \ds -\left[\chi_\infty(x)-t\overline H\right]+ \left[ \chi(x)-\chi(\bar \gamma^x(t))-t\overline H+\int_0^t \zeta(\bar \gamma^x(s))  \ ds 
+ \chi_\infty(\bar \gamma^x(t))\right]\\
= & \ds \big[ (\chi_\infty-\chi)(\bar \gamma^x(s))\big]_0^t+\int_0^t \zeta(\bar \gamma^x(s)) \ ds.
\end{array}
$$
Since the map $x\mapsto \int_0^t \zeta(\bar \gamma^x(s))  \ ds$ has  compact support, it is  
integrable with respect to  $\tilde \sigma$.

\smallskip

This last observation together with the integrability of $\xi$ yield the first assertion. The second one is immediate  from the formulae above. 
%
\end{proof}


The next lemma is about the fact that, at least along the optimal trajectories, $\chi_\infty -\chi$ has limits.  

\begin{lem}\label{lem:nondec} For $\tilde \sigma-$a.e. $x\in \R^d$, the map $t\mapsto (\chi_\infty-\chi)(\bar \gamma^x(t))$ is non decreasing on any time-interval $[t_1,t_2]$ such that $\bar\gamma^x$ does not encounter ${\rm sppt}(\zeta)$. In particular, the limits 
$$
c^\pm(x):= \lim_{t\to \pm\infty} (\chi_\infty-\chi)(\bar \gamma^x(t))
$$
exist for $\tilde \sigma-$a.e. $x\in \R^d$ and,  if $\bar \gamma^x$ never encounters ${\rm sppt}(\zeta)$, then $c^-(x)\leq c^+(x)$.
\end{lem}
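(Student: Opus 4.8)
The plan is to exploit the variational representation \eqref{takisoptimalinfty} for $\chi_\infty$ together with the optimality identity \eqref{takisoptimal} for $\chi$, using that the specific curve $\bar\gamma^x$ is admissible (but not necessarily optimal) in the minimization defining $\chi_\infty$. Fix $t_1<t_2$ with $\bar\gamma^x([t_1,t_2])\cap\mathrm{sppt}(\zeta)=\emptyset$. Using \eqref{takisoptimalinfty} restricted to the time interval $[t_1,t_2]$ via the dynamic programming principle, and comparing the infimum with the value along the admissible piece of $\bar\gamma^x$, I get
$$
\chi_\infty(\bar\gamma^x(t_1)) \le \int_{t_1}^{t_2}\bigl(L(\dot{\bar\gamma}^x(s),\bar\gamma^x(s))+\overline H+\zeta(\bar\gamma^x(s))\bigr)\,ds + \chi_\infty(\bar\gamma^x(t_2)),
$$
and since $\zeta\equiv 0$ along this piece, the $\zeta$ term drops. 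On the other hand, \eqref{takisoptimal} (applied between times $t_1$ and $t_2$, which is legitimate because $\bar\gamma^x$ is the optimal trajectory for $\chi$ for every initial point) gives the exact identity
$$
\chi(\bar\gamma^x(t_1)) = \int_{t_1}^{t_2}\bigl(L(\dot{\bar\gamma}^x(s),\bar\gamma^x(s))+\overline H\bigr)\,ds + \chi(\bar\gamma^x(t_2)).
$$
Subtracting the second from the first, the Lagrangian integrals cancel exactly, and I am left with $(\chi_\infty-\chi)(\bar\gamma^x(t_1))\le(\chi_\infty-\chi)(\bar\gamma^x(t_2))$, which is precisely the claimed monotonicity on $[t_1,t_2]$.

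For the existence of the limits $c^\pm(x)$, I would combine this monotonicity with a global bound. The function $t\mapsto(\chi_\infty-\chi)(\bar\gamma^x(t))$ is bounded, uniformly in $x$ and $t$, because $\chi_\infty$ and $\chi$ are both bounded on $\mathbb{R}^d$ up to the relevant renormalization — $\chi$ is periodic hence bounded, and $\chi_\infty$ is locally bounded; more carefully, from Lemma~\ref{lem:keykey} the map $x\mapsto[(\chi_\infty-\chi)(\bar\gamma^x(s))]_0^t$ is in $L^1_{\tilde\sigma}$, and by the invariance of $\tilde\sigma$ under $x\mapsto\bar\gamma^x(t)$ this forces, for $\tilde\sigma$-a.e.\ $x$, that the increments stay controlled, so the orbit sum telescopes to something finite. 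The cleanest route is: since $\bar\gamma^x$ spends only a bounded amount of time near $\mathrm{sppt}(\zeta)$ when $e\neq 0$ (by Lemma~\ref{lem:avoid}, the trajectory drifts off to infinity in the direction $e$), for $\tilde\sigma$-a.e.\ $x$ the set of times when $\bar\gamma^x$ meets $\mathrm{sppt}(\zeta)$ is contained in a bounded interval $[-T(x),T(x)]$; outside that interval the map is monotone in each of the two tails, and being bounded it therefore has limits $c^+(x)$ as $t\to+\infty$ and $c^-(x)$ as $t\to-\infty$.

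Finally, for the last assertion: if $\bar\gamma^x$ never encounters $\mathrm{sppt}(\zeta)$ at all, then the monotonicity holds on the whole line $(-\infty,+\infty)$, so $t\mapsto(\chi_\infty-\chi)(\bar\gamma^x(t))$ is globally non-decreasing, whence $c^-(x)=\lim_{t\to-\infty}\le\lim_{t\to+\infty}=c^+(x)$, which is exactly what is asserted.

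\textbf{Main obstacle.} The delicate point is the boundedness of $(\chi_\infty-\chi)(\bar\gamma^x(\cdot))$ along orbits for $\tilde\sigma$-a.e.\ $x$, and, relatedly, the justification that $\bar\gamma^x$ meets $\mathrm{sppt}(\zeta)$ only on a bounded time set — the latter uses the uniqueness of the Mather measure and $e\neq 0$ through Lemma~\ref{lem:avoid}, but one must check that the hypothesis \eqref{condgamma} is satisfied by $\bar\gamma^x$ (it is, since \eqref{takisoptimal} bounds $\int_0^t(L+\overline H)$ by $\chi(x)-\chi(\bar\gamma^x(t))$, which is bounded because $\chi$ is periodic). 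The "$\tilde\sigma$-a.e." qualifier is essential precisely because the $L^1_{\tilde\sigma}$ conclusion of Lemma~\ref{lem:keykey}, not a pointwise statement, is what ultimately pins down the finiteness of the telescoped increments. Everything else is a routine application of the dynamic programming principle and the exact optimality of $\bar\gamma^x$ for the unperturbed problem.
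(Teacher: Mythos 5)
Your proof is correct and follows essentially the same route as the paper: monotonicity comes from comparing the variational inequality for $\chi_\infty$ along the admissible piece of $\bar\gamma^x$ with the exact optimality identity for $\chi$ (the Lagrangian terms cancel, the $\zeta$ term drops), and the existence of $c^\pm(x)$ follows from the drift $e\neq 0$ (which keeps the trajectory away from $\mathrm{sppt}(\zeta)$ for $|t|$ large) combined with boundedness of $\chi_\infty-\chi$. One small tidying remark: the boundedness is not delicate and does not require the $L^1_{\tilde\sigma}$ detour through Lemma~\ref{lem:keykey} — $\chi_\infty$ is \emph{globally} bounded (not merely locally bounded) by construction, since it is a locally uniform limit of the $\chi_R$'s which satisfy $\|\chi_R\|_\infty\lesssim 1$, so $t\mapsto(\chi_\infty-\chi)(\bar\gamma^x(t))$ is bounded outright; the ``$\tilde\sigma$-a.e.'' qualifier is there only to invoke \eqref{RotNumber}.
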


\begin{proof} Let $x\in \R^d$, and $t_1, t_2 \in \R$ such that $t_1 < t_2$ and $\gamma^x([t_1,t_2]) \cap \text{sppt}(\zeta) =\emptyset$, and, hence,   $\zeta(\bar \gamma^x)=0$  on $[t_1,t_2]$. Using \eqref{takisoptimalinfty} and \eqref{takisoptimal},  for any $t_1\leq s_1\leq s_2\leq t_2$, we find
$$
\begin{array}{rl}
\ds \chi_\infty(\bar \gamma^x(s_1)) \;  \leq & \ds \int_{s_1}^{s_2} (L(\dot{\bar \gamma}^x(s), \bar \gamma^x(s)) +\overline H+\zeta(\bar \gamma^x(s))) \ ds + \chi_\infty(\bar \gamma^x(s_2)) \\[3.5mm]
= & \ds \chi(\bar \gamma^x(s_1))- \chi(\bar \gamma^x(s_2)) + \chi_\infty(\bar \gamma^x(s_2)).
\end{array}
$$
Since, for $\tilde \sigma-$a.e. $x\in \R^d$,
$$
\lim_{t\to \pm\infty} \frac{\bar \gamma^x(t)}{t}= e\neq 0, 
$$
there exists $T$ such that $\bar \gamma^x$ does not intersect ${\rm sppt}(\zeta)$ for $|t| \geq T$. Then $t\mapsto (\chi_\infty-\chi)(\bar \gamma^x(t))$ is non decreasing and bounded on the intervals $(-\infty,-T]$ and $[T, +\infty)$, and the claimed  limits exist. 
\smallskip

 If $\bar \gamma^x$ does not encounter ${\rm sppt}(\zeta)$ at all, then $t\to (\chi_\infty-\chi)(\bar \gamma^x(t))$ is non decreasing on $\R$, and, hence, 
$$
c^-(x)\leq \lim_{t\to -\infty} (\chi_\infty-\chi)(\bar \gamma^x(t))\leq \lim_{t\to +\infty} (\chi_\infty-\chi)(\bar \gamma^x(t))= c^+(x).
$$
\end{proof}
The next step is crucial, since it asserts that it is possible to always compare  $c^-$ and $c^+$. 
 \begin{lem}\label{lem:key}  The maps $c^+$ and $c^-$ are constant with $c^-\leq c^+$. 
\end{lem}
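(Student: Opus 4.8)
The plan is to first show that $c^{+}$ and $c^{-}$ are $\tilde\sigma$-a.e.\ equal to constants, and then, using $d\ge 2$, to produce a single point $z$ whose optimal trajectory $\bar\gamma^{z}$ never meets $\text{sppt}(\zeta)$ and at which $c^{\pm}$ take those constant values; the inequality $c^{-}\le c^{+}$ is then exactly the last assertion of Lemma~\ref{lem:nondec} applied at $z$. For the constancy, note that $c^{+}$ and $c^{-}$ are Borel measurable, being, on the full-measure set where they exist, pointwise limits as $t\to\pm\infty$ of the continuous maps $x\mapsto(\chi_{\infty}-\chi)(\bar\gamma^{x}(t))$, and that the flow property $\bar\gamma^{\bar\gamma^{x}(s)}(t)=\bar\gamma^{x}(s+t)$ gives $c^{\pm}(\bar\gamma^{x}(s))=c^{\pm}(x)$ for every $s$; thus $c^{\pm}$ are invariant under the flow of $x\mapsto-D_{p}H(D\chi(x),x)$. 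Since $\tilde\sigma$ is the \emph{unique} projected Mather measure it is ergodic for this flow, so $c^{+}$ and $c^{-}$ coincide $\tilde\sigma$-a.e.\ with constants, still denoted $c^{\pm}$.

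For the construction, fix $M$ with $\text{sppt}(\zeta)\subset\{x:\,|\langle e,x\rangle|\le M\}$ and pick any $x_{0}$; by \eqref{RotNumber} (which, as noted after Lemma~\ref{lem:avoid}, holds for every $x_{0}$), $\bar\gamma^{x_{0}}(t)/t\to e$ as $t\to\pm\infty$, hence $\langle e,\bar\gamma^{x_{0}}(t)\rangle\to\pm\infty$ and $A:=\bar\gamma^{x_{0}}(\R)\cap\{|\langle e,x\rangle|\le M+1\}$ is a bounded arc, say $A\subset B_{\rho}$. A pigeonhole argument — and it is here, and only here, that $d\ge2$ is used, the statement being false for $d=1$ — produces $v\in\Z^{d}$ with $|\langle e,v\rangle|\le1$ and with the component of $v$ orthogonal to $e$ so large that $\text{sppt}(\zeta)-v$ is disjoint from $B_{\rho}$. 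Since $\text{sppt}(\zeta)-v\subset\{|\langle e,x\rangle|\le M+1\}$, any point of $\bar\gamma^{x_{0}}(\R)$ lying in $\text{sppt}(\zeta)-v$ would belong to $A\subset B_{\rho}$, which is impossible; because $\bar\gamma^{x_{0}+v}=\bar\gamma^{x_{0}}+v$, the trajectory $\bar\gamma^{x_{0}+v}$ never meets $\text{sppt}(\zeta)$, i.e.\ $x_{0}+v\in\mathcal O$ and $\mathcal O\neq\emptyset$. As $\mathcal O$ is open — this follows from the continuity of $x\mapsto\bar\gamma^{x}$ together with the drift estimate of Corollary~\ref{cor:takis}, which confines each trajectory to a given bounded set only over a bounded time interval, so that avoiding the compact set $\text{sppt}(\zeta)$ is an open condition — the full support of $\tilde\sigma$ from \eqref{takisadditional} gives $\tilde\sigma(\mathcal O)>0$.

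Consequently $\mathcal O$ meets the full $\tilde\sigma$-measure set on which $c^{+}$ and $c^{-}$ equal the constants of the first step; pick $z$ in this intersection. Since $\bar\gamma^{z}$ never encounters $\text{sppt}(\zeta)$, the map $t\mapsto(\chi_{\infty}-\chi)(\bar\gamma^{z}(t))$ is non-decreasing and bounded on $\R$ by Lemma~\ref{lem:nondec}, so $c^{-}=c^{-}(z)\le c^{+}(z)=c^{+}$, which is the claim. The main difficulty is the bump-avoiding trajectory: one must slide a generic unperturbed optimal trajectory entirely off an arbitrary compactly supported obstacle, and this is possible precisely because in dimension at least two there is a lattice direction nearly orthogonal to the rotation vector $e$ — the very mechanism that breaks down when $d=1$. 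A more technical, but less conceptual, point is the passage from flow-invariance to honest $\tilde\sigma$-a.e.\ constancy of $c^{\pm}$ on all of $\R^{d}$, which leans on the ergodicity stemming from the uniqueness of $\tilde\sigma$ and on the monotonicity furnished by Lemma~\ref{lem:nondec}.
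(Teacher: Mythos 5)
The second half of your proposal (producing a point $z$ whose trajectory $\bar\gamma^{z}$ avoids $\text{sppt}(\zeta)$, then invoking Lemma~\ref{lem:nondec} to get $c^{-}\le c^{+}$) is sound and is essentially the paper's closing step, with the pigeonhole argument made more explicit. The real problem is in your first half.

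You argue: $c^{\pm}$ are flow-invariant, $\tilde\sigma$ is ergodic because the Mather measure is unique, hence $c^{\pm}$ are $\tilde\sigma$-a.e.\ constant. This does not follow. The ergodicity of $\tilde\sigma$ is ergodicity of the \emph{projected} flow on the torus $\T^{d}$, for the probability measure $\tilde\sigma\big|_{Q}$; it only yields a.e.\ constancy of \emph{$\Z^{d}$-periodic} invariant functions. The maps $c^{\pm}(x)=\lim_{t\to\pm\infty}(\chi_{\infty}-\chi)(\bar\gamma^{x}(t))$ are defined on $\R^{d}$ and there is no a priori reason they descend to $\T^{d}$: $\chi$ is periodic, but $\chi_{\infty}$ is not, so $c^{+}(x+k)-c^{+}(x)=\lim_{t}\bigl[\chi_{\infty}(\bar\gamma^{x}(t)+k)-\chi_{\infty}(\bar\gamma^{x}(t))\bigr]$, and controlling this quantity is essentially the content of the lemma, not an input to it. The flow on $\R^{d}$ itself (with the $\Z^{d}$-periodic extension of $\tilde\sigma$, an infinite measure) is \emph{not} ergodic: even in the model case $\chi=0$, $\bar\gamma^{x}(t)=x+te$, every half-space $\{\langle v,x\rangle>0\}$ with $v\perp e$ is a nontrivial invariant set, and a flow-invariant function may depend arbitrarily on the coordinate transverse to $e$. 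So your step ``flow-invariance $+$ ergodicity $\Rightarrow$ constant a.e.'' fails, and without it you have no point $z$ at which $c^{\pm}(z)$ equal distinguished constants.

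The paper fills exactly this gap by working harder with the PDE. It extracts a subsequential locally uniform limit $\chi_{\infty}^{+}$ of the translates $\chi_{\infty}(\cdot+k_{n})$, where $k_{n}$ are the recurrence shifts of a single trajectory; it shows $(\chi_{\infty}^{+}-\chi)(\bar\gamma^{x_{0}+k}(t))=c^{+}(x_{0}+k)$ for every $k\in\Z^{d}$ and $t$, then upgrades this to all $x\in\text{sppt}(\tilde\sigma)=\R^{d}$ using orbit density; and finally it observes that, since $|k_{n}|\to\infty$ and $\zeta$ has compact support, $\chi_{\infty}^{+}$ solves the \emph{unperturbed} cell problem, whereupon the strict convexity of $H$ together with the full-support hypothesis on $\tilde\sigma$ forces $D\chi_{\infty}^{+}=D\chi$ a.e., hence $\chi_{\infty}^{+}-\chi\equiv\text{const}$ and $c^{+}$ is constant. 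That chain — translate-and-pass-to-the-limit, identify the limit as a corrector, use strict convexity and full support to identify its gradient — is the indispensable input you omitted; flow-invariance and ergodicity alone are not enough.
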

\begin{proof} 
Fix $x_0\in \R^d$ such that \eqref{RotNumber} holds. Since $\tilde \sigma$ is ergodic, there exist $t_n\to+\infty$ and $k_n\in \Z^d$ such that, as $n\to \infty$,  
$$
\left|\bar \gamma^{x_0}(t_n)-x_0-k_n\right|\to 0. 
$$
In view of the $\Z^d$-periodicity of the flow $x\mapsto \bar \gamma^x$, that is the fact that, for every $k\in \Z^d$, $\bar \gamma^{x+k}=\bar \gamma^x + k$, 
we also have, for all $k\in \Z^d$, 
\be\label{choixx0BIS}
\lim_{t\to \pm\infty} \frac{\bar \gamma^{x_0+k}(t)}{t}= e \neq 0, 
\ee
and
\be\label{choixtnBis}
\lim_{n\to \infty}\left|\bar \gamma^{x_0+k}(t_n)-x_0-k-k_n\right|=0.
\ee
The boundedness and Lipschitz continuity  of $\chi_\infty$ allow to choose a sequence $(\chi_\infty(\cdot+k_n))_{n\in \N}$ that converges  locally uniformly to some map $\chi_\infty^+$. 
\smallskip

Fix now some $k\in \Z^d$. Then \eqref{choixx0BIS} yields some  $t_0$ such that $\bar \gamma^{x_0+k}([t_0, \infty))\cap \text{sppt}(\zeta)=\emptyset,$ and, in turn, 
Lemma \ref{lem:nondec}  states that the map $t\mapsto (\chi_\infty-\chi)(\bar \gamma^{x_0+k}(t))$ is nondecreasing and converges to $c^+(x_0+k)$ as $t\to+\infty$. 
\smallskip

In particular, for any $t\in \R$,  
$$
\lim_{n\to \infty}  (\chi_\infty-\chi)(\bar \gamma^{x_0+k}(t_n))= \lim_{n\to \infty}  (\chi_\infty-\chi)(\bar \gamma^{x_0+k}(t_n+t)) = c^+(x_0+k).
$$
Then \eqref{choixtnBis} and the continuity and periodicity of $\chi$ give
$$
\lim_{n} \chi(\bar \gamma^{x_0+k}(t_n))=\lim_n \chi(x_0+k+k_n)= \chi(x_0+k), 
$$
while the uniform continuity of $\chi_\infty$ implies 
$$
\lim_{n\to \infty}  \chi_\infty(\bar \gamma^{x_0+k}(t_n)) = \lim_{n\to \infty}  \chi_\infty(x_0+k+k_n) = \chi_\infty^+(x_0+k).
$$
Note also that, using  \eqref{choixtnBis}  with  $k=0$, and the periodicity and continuity of flow $x\mapsto \bar \gamma^x$, we find that, as $n\to \infty$,
$$
\left|\bar \gamma^{x_0+k}(t_n+t)-\bar \gamma^{x_0+k}(t)-k_n\right|= \left|\bar \gamma^{\bar \gamma^{x_0}(t_n)-k_n}(t)-\bar \gamma^{x_0}(t)\right|  \to 0.
$$
In conclusion 
$$
\begin{array}{rl}
\ds c^+(x_0+k)\; = & \ds \lim_{n\to \infty} (\chi_\infty-\chi)(\bar \gamma^{x_0+k}(t_n+t))\\
= & \ds \lim_{n\to \infty} (\chi_\infty-\chi)(\bar \gamma^{x_0+k}(t)+k_n)=(\chi_\infty^+-\chi)(\bar \gamma^{x_0+k}(t)). 
\end{array}
$$
This proves that, for all $t\in \R$ and $k\in \Z^d$,  
\be\label{chiinfty+-chiCst}
 (\chi_\infty^+-\chi)(x_0+k)= (\chi_\infty^+-\chi)(\bar \gamma^{x_0+k}(t))= c^+(x_0+k). 
\ee
Next we show that the map $t\mapsto (\chi_\infty^+-\chi)(\bar \gamma^x(t))$  is constant for any $x \in \text{sppt}(\tilde \sigma).$ 
\smallskip

Fix $x\in {\rm sppt}(\tilde \sigma)$. 
Since  $\tilde \sigma$ is ergodic with full support, there exist sequences $s_n\to +\infty$ and $m_n\in \Z^d$ such that, as $n\to \infty$,  
$$
x-\bar \gamma^{x_0}(s_n)-m_n=x-\bar \gamma^{x_0+m_n}(s_n) \to 0.
$$
Then   \eqref{chiinfty+-chiCst} implies that  the map 
$$
s\mapsto (\chi_\infty^+-\chi)(\bar \gamma^{x_0+m_n}(s_n+s))=  (\chi_\infty^+-\chi)(\bar \gamma^{\bar \gamma^{x_0+m_n}(s_n)}(s))
$$ has constant on $\R$  value $c^+(x_0+m_n)$ and  converges  locally uniformly to 
 $s\to (\chi_\infty^+-\chi)(\bar \gamma^{x}(s))$, which is therefore also constant on $\R$, that is, for all $s \in \R$ and all  $ x\in {\rm sppt}(\tilde\sigma)$
 \be\label{chiinfty+-chiCstBis}
 \lim_{n\to \infty} c^+(x_0+m_n)= \; (\chi_\infty^+-\chi)(\bar \gamma^{x}(s))=  (\chi_\infty^+-\chi)(x).
 \ee
Finally,   \eqref{RotNumber} and the fact that $|k_n|\to +\infty$ imply that  $\chi_\infty^+$ solves the same equation as $\chi$, that is 
$$
H(D\chi_\infty^+,x)= \overline H\qquad {\rm in}\; \R^d,
$$
the main difference being that $\chi_\infty^+$ is not periodic a priori. 
\smallskip

The next step is to show that $D\chi_\infty^+=D\chi$ a.e. and for this we use that  $\tilde \sigma>0$ a.e.. 
\smallskip

Let $x$ be a point of differentiability of $\chi_\infty^+$ and  note $\chi$ is differentiable at $x$. Then, since $\dot{\bar \gamma}^{x}(0)= -D_pH(x, D\chi(x))$,  \eqref{chiinfty+-chiCstBis} gives 
$$
\lg D(\chi_\infty^+-\chi)(x), -D_pH(x, D\chi(x))\rg =0.
$$
On the other hand the uniform convexity of $H$ implies, for some $C>0$, 
$$
\begin{array}{rl}
\ds 0\; = & \ds  H(D\chi_\infty^+,x)-H(D\chi,x) \\[2mm]  
\geq & \ds \lg D_pH( D\chi,x), D(\chi_\infty^+-\chi)(x)\rg + C|D(\chi_\infty^+-\chi)(x)|^2 \\[2mm]
\geq & \ds C|D(\chi_\infty^+-\chi)(x)|^2.
\end{array}
$$
This proves that $D\chi_\infty^+=D\chi$ a.e. and, in particular, that $\chi_\infty^+-\chi$ is constant, which means that  $\chi_\infty^+$ is periodic. 
\smallskip

It follows from  \eqref{chiinfty+-chiCst} that  $c^+(x_0+k)=c^+(x_0)$ for any $k\in \Z^d$, which, using    \eqref{chiinfty+-chiCstBis}, leads to $c^+(x_0)= (\chi_\infty^+-\chi)(x)$ for any $x\in \R^d$. 
\smallskip
 
A symmetric construction shows that $c^-$ is also constant. 
\smallskip

Finally,  \eqref{RotNumber} yields  some  $k_0\in \Z^d$ large enough such  that the trajectory $\bar \gamma^{x_0+k_0}$ does not encounter ${\rm sppt}(\zeta)$ and, in view of  Lemma \ref{lem:nondec},  $c^-=c^-(x_0+k_0)\leq c^+(x_0+k_0)=c^+$.  Note that we use here the fact that we work in dimension $d\geq 2$, since otherwise it is not true that the trajectory $\bar \gamma^{x_0+k_0}$ does not intersect ${\rm sppt}(\zeta)$ for $k_0$ large enough. 
\end{proof}
We continue with the:

\begin{proof}[Proof of Theorem \ref{them:main} for $d\geq 2$.] Fix $r\geq 1$ large enough so that $Q_r$ contains the support of $\zeta$ and set $F:=\{\bar \gamma^x(t):   x\in Q_r, \; t\in \R\}$, that is $F$ contains all points that can be reached by optimal trajectories starting in $Q_r$ at some time $t\in \R$. 
\smallskip

The continuity of the flow $(t,x)\mapsto \bar \gamma^x(t)$  and \eqref{RotNumber} imply the existence  a time $T_0\in\N$ such that, for any $x\in Q_r$ and all $t$ such that $ |t|\geq T_0$, 
\be\label{defT0}
 \bar \gamma^x(t) \notin Q_r. 
\ee

Since $\overline H_R\leq \overline H$, Lemma \ref{lem:keykey} suggests that to conclude,  we just need to check that 
$$
I:= \int_{\R^d} \left[ (\chi_\infty-\chi)(\bar \gamma^x(s)\right]_0^{T_0} d \tilde \sigma(x) \geq 0.
$$
Lemma \ref{lem:nondec} states  that $\left[ (\chi_\infty-\chi)(\bar \gamma^x(s)\right]_0^{T_0}$ is nonnegative, if the trajectory $\bar \gamma^x$ does not encounter $\text{sppt}(\zeta)$. Since $\text{sppt}(\zeta) \subset Q_r$, $F$ in particular contains all the initial positions $x$ such that $\bar \gamma^x$ intersects the support of $\zeta$. Thus
$$
I\geq \int_{F} \left[ (\chi_\infty-\chi)(\bar \gamma^x(s)\right]_0^{T_0} d \tilde \sigma(x) .
$$
The set $F$ is not precise enough and we do not have much control over its size. In the next step, we replace it by a smaller one $\tilde F$, which carries more information,  without increasing by ``too much'' the size of the lower bound on $I$ in the inequality above.  
\smallskip

Let us recall that, from Lemma \ref{lem:nondec}, $(\chi_\infty-\chi)(\bar \gamma^x(s))$ converges to $c^\pm$ as $\pm s \to +\infty$ for a.e. $x\in Q_r$. 
So,  by Egoroff's theorem, for any 
fixed $\ep>0$,  there exists  a compact subset $K$ of $Q_r \subset F$  and a time $T\in \N$ such that 
\begin{itemize}
\item[(i)] if    $x\in K$ and $\pm s \geq T$, then  $\ds \left| (\chi_\infty-\chi)(\bar \gamma^x(s))-c^\pm \right|\leq \ep$, 
\item[(ii)] if  $\tilde F:=\{\bar\gamma^x(s): x\in K, \; s\in \R\}$, then 
$$
  \int_{F} \left[ (\chi_\infty-\chi)(\bar \gamma^x(s))\right]_0^{T_0} d \tilde \sigma(x) \geq 
   \int_{\tilde F} \left[ (\chi_\infty-\chi)(\bar \gamma^x(s))\right]_0^{T_0} d \tilde \sigma(x)-\ep
$$
and
$$
   \tilde \sigma (\tilde F\cap Q_r)\geq \tilde \sigma (F\cap Q_r)-\ep=  \tilde \sigma (Q_r)-\ep.
$$
\end{itemize}
The next step is to provide a more precise characterization for $\tilde F$. For this we construct $E\subset K$ such that 
\be\label{takis50bis}
\tilde F=\{\bar \gamma^x(t):x\in E, \; t\in \R\} \ \text{and} \  \{\bar \gamma^x(t): t\in \R \}\cap E=\{x\} \text{for all  $x\in E$}.
\ee 
For any $x\in K$,  let $\tau_x:=\inf\{t\in \R: \bar \gamma^x(t)\in K\}$ and set $E:= \{\bar \gamma^x(\tau_x): x\in Q_r\}$. It is immediate that $E$ is a Borel measurable set and satisfies \eqref{takis50bis}.
\smallskip

For $k\in \Z$, set 
$$
E(k):= \{\bar \gamma^x(t):t\in [k,k+1)T_0, \; x\in E\}.
$$
The family $(E(k))_{k\in \Z}$ is a partition of $\tilde F$. Moreover note that the definition of $E$ and \eqref{defT0} yield
$$
 \tilde F\cap Q_r \subset E(0).
$$
For $n\in \N$, $n\geq T$ and large enough, we have
$$
\begin{array}{l}
\ds   \int_{\tilde F} \left[ (\chi_\infty-\chi)(\bar \gamma^x(s)\right]_0^{T_0}\ d \tilde \sigma(x) \\
\qquad  \ds \geq 
 \sum_{k=-n-1}^{n} \int_{E(k)} \left[ (\chi_\infty-\chi)(\bar \gamma^x(s)\right]_0^{T_0}\ d \tilde \sigma(x) -\ep \\
\qquad  \ds = 
 \sum_{k=-n-1}^{n} \left[\int_{E(k)}(\chi_\infty-\chi)(\bar \gamma^x(T_0))\ d \tilde \sigma(x) 
 -\int_{E(k)}(\chi_\infty-\chi)(x)\ d \tilde \sigma(x)\right] -\ep \\ 
 \ds \qquad =
  \sum_{k=-n-1}^{n} \left[\int_{E(k+1)}  (\chi_\infty-\chi)(x)\ d \tilde \sigma(x)
- \int_{E(k)}  (\chi_\infty-\chi)(x) \ d \tilde \sigma(x)\right]-\ep, 
  \end{array}
 $$
 where we used that $\tilde \sigma$ is invariant by the flow $\bar \gamma^x$ and  the image by  the map   $x\mapsto \bar \gamma^x(T_0)$ of $E(k)$ is $E(k+1)$. 
\smallskip

Hence
$$
\begin{array}{l}
\ds   \int_{\tilde F} \left[ (\chi_\infty-\chi)(\bar \gamma^x(s)\right]_0^{T_0}\ d \tilde \sigma(x)\\
 \ds \qquad \geq \; 
   \int_{E(T+1)}  (\chi_\infty-\chi)(x)\ d \tilde \sigma(x)
- \int_{E(-T-1)}  (\chi_\infty-\chi)(x) \ d \tilde \sigma(x) -\ep.
  \end{array}
 $$
The choice  of $\tilde F$ (property (i)) gives 
$$
\begin{array}{rl}
\ds   \int_{E(T+1)}  (\chi_\infty-\chi)(x)\ d \tilde \sigma(x)
\; = &
\ds   \int_{E(0)}  (\chi_\infty-\chi)(\bar \gamma^x(T+1))\ d \tilde \sigma(x) \\
 \geq & \ds (c^+-\ep) \tilde \sigma(E(0)),
  \end{array}
$$
and, similarly,  
$$
 \int_{E(-T-1)}  (\chi_\infty-\chi)(x)\ d \tilde \sigma(x) \leq (c^-+\ep)  \tilde \sigma(E(0)). 
$$
Thus, since  $c^+\geq c^-$ and $ \tilde F\cap Q_r \subset E(0)$, we get 
$$
\begin{array}{l}
\ds   \int_{\R^d} \left[ (\chi_\infty-\chi)(\bar \gamma^x(s)\right]_0^{T_0}\ d \tilde \sigma(x)\\
 \ds \qquad \geq \; (c^+-c^-)  \tilde \sigma(E(0)) - C\ep\;  \geq \; 
 (c^+-c^-)  \tilde \sigma(\tilde F\cap Q_r) -C\ep .
  \end{array}
 $$
Using property (ii) in the definition of $\tilde F$, we finally find 
$$
\begin{array}{l}
\ds I=  \int_{\R^d} \left[ (\chi_\infty-\chi)(\bar \gamma^x(s)\right]_0^{T_0}\ d \tilde \sigma(x)
\geq (c^+-c^-)  \tilde \sigma(Q_r)- C\ep.
  \end{array}
 $$
Letting $\ep\to0$, we obtain 
$$
0\geq I\geq (c^+-c^-)  \tilde \sigma(Q_r)  \geq 0, 
$$
which yields both   $I=0$ and $c^+=c^-$.  
\end{proof}

\subsection*{The proof of Corollary~\ref{cor:cor}}  
Let $c$ be the common value of $c^\pm$;  recall that in the  proof of Theorem \ref{them:main} we showed that $c^+=c^-$.  Let now $x\in {\mathcal O}$, the open set of points such that $(\bar \gamma^x(t))_{t\in \R}$ does not intersects ${\rm sppt}(\zeta)$.  Lemma \ref{lem:nondec} implies that  the map $t\mapsto (\chi_\infty-\chi)(\bar \gamma^x(t))$ is non increasing and has the same limit $c$ at $-\infty$ and $+\infty$. Hence it is constant in $\mathcal O$ and (i) holds. 
\smallskip

Next we show that, for any $\ep>0$, there exists $T_\ep>0$ such that 
$$
(\chi_\infty-\chi)(\bar \gamma^x(t))\geq c-\ep \ \text{ for all $x \in {\rm sppt}(\zeta)$ and $ t\geq T_\ep.$}
$$
Fix $x\in  {\rm sppt}(\zeta)$. We know from Lemma \ref{lem:nondec} that the map $t\mapsto (\chi_\infty-\chi)(\bar \gamma^x(t))$ is non decreasing for $t$ large enough and converges to $c$. Hence,  there exists $T_x$ such that  $\left|(\chi_\infty-\chi)(\bar \gamma^x(t))-c\right|\leq \ep/2$ for $t\geq T_x$. 
\smallskip

Sine the map $y\to \bar \gamma^y(t)$ is continuous, we also have $\left|(\chi_\infty-\chi)(\bar \gamma^y(T_x))- c\right|\leq \ep$ for any $y$ in some ball centered at $x$ and of radius $\delta_x>0$. 
\smallskip

Thus, since the map $t\mapsto (\chi_\infty-\chi)(\bar \gamma^y(t))$ is nondecreasing,  for $y\in B(x,\delta_x)$ and $t\geq T_x$ we have  $(\chi_\infty-\chi)(\bar \gamma^y(t))\geq  c-\ep$. 
We conclude using  a standard compactness argument. 
\smallskip

Next we claim that,  
for any $\ep>0$, there exists $K_\ep\geq 0$ such that, for all  $x\in \R^d$ such that $\lg x, e\rg \leq -K_\ep$,
\be\label{chiinfty-chi+}
 (\chi_\infty-\chi)(x)\geq  c- \ep. 
\ee
Fix $\ep>0$, let $T_\ep$ be defined as in  the previous step and choose 
$$
K_\ep\geq MT_\ep+ \sup_{y\in {\rm sppt}(\zeta)}\lg y,e\rg,
$$ 
with  $M:=\|D_pH(D\chi,\cdot)\|_\infty$; notice that  $\|\dot {\bar \gamma}^x\|_\infty\leq M$. 
\smallskip

It follows from Corollary~\ref{cor:takis} that, if $K_\ep$ is large enough, then for any  $x\in \R^d$ with $\lg x, e\rg \geq K_\ep$, the trajectory $\bar \gamma^x(t)$ does not instersect  ${\rm sppt}(\zeta)$ for $t\geq 0$.
\smallskip

Fix $x\in \R^d$ with $\lg x, e\rg \geq K_\ep$. If $\bar \gamma^x(\R) \cap {\rm sppt}(\zeta)=\emptyset$, we know that $(\chi_\infty-\chi)(x)= c$. 
\smallskip

We now assume that there exists $t\in \R$ such that $\bar \gamma^x(t)\in {\rm sppt}(\zeta)$. Then, by the definition of $T_\ep$, $t\leq 0$, and, since as $\lg x, e\rg\geq K_\ep$ and $ \bar \gamma^x(t)\in {\rm sppt}(\zeta)$, we must have 
$$
|x-\bar \gamma^x(t)| \geq \lg x-\bar \gamma^x(t), e \rg \geq 
K_\ep- \sup_{y\in {\rm sppt}(\zeta)}\lg y,e\rg \ \geq \ \|\dot{\bar \gamma}^x\|_\infty T_\ep,
$$
and, hence,  $|t|\geq  T_\ep$. But then the first claim of the corollary gives
$$
(\chi_\infty-\chi)(x)=(\chi_\infty-\chi)(\bar \gamma^{\bar \gamma^x(t)}(-t))\geq c-\ep,
$$
which proves \eqref{chiinfty-chi+}. 
\smallskip

A similar argument  shows that, for all $\ep>0$, there exists $K_\ep\geq 0$ such that, for all $x\in \R^d$ such that $ \lg x, e\rg \leq -K_\ep$, 
\be\label{chiinfty-chi-}
(\chi_\infty-\chi)(x)\leq c+\ep, 
\ee   
which  implies (iv). 
\smallskip

We now prove (ii).  Fix $x\in \R^d$ and let $\tilde \gamma$ be the optimal path for $\chi_\infty(x)$ for positive times. Then
\be\label{eq:ppdTER}
\chi_\infty(x)= \int_0^t \left( L(\dot{\tilde \gamma}(s), {\tilde\gamma}(s))   +\zeta(\tilde \gamma(s) + \overline H \right) ds +\chi_\infty(\tilde \gamma(t)), 
\ee
while 
\be\label{eq:ppdTERbis}
\chi(x)\leq  \int_0^t \left( L(\dot{\tilde \gamma}(s), {\tilde\gamma}(s)) +\overline H\right) ds +\chi(\tilde \gamma(t)).
\ee
Fix $\ep>0$ and let $K_\ep$ be given by the previous step. It follows from  \eqref{eq:ppdTER} that, for all $t\geq 0$,  
$$
\int_0^t \left( L(\dot{\tilde \gamma}(s), {\tilde\gamma}(s))+\overline H\right) ds \leq 2\|\chi_\infty\|_\infty,
$$
which, in view of Lemma \ref{lem:avoid}, implies that there exists $T_\ep>0$ such that $\lg \tilde \gamma(t), e\rg \geq K_\ep$ for all  $t\geq T_\ep$. Then the definition of $K_\ep$ gives that,  for all  $t\geq T_\ep,$
%
$$
(\chi_\infty-\chi)(\tilde \gamma(t))\geq c-\ep. 
$$   
Subtracting \eqref{eq:ppdTERbis} from \eqref{eq:ppdTER} and using  $\zeta\geq 0$ yields 
$$
(\chi_\infty-\chi)(x) \geq \int_0^{T_\ep}\zeta(\tilde \gamma(s)) ds +(\chi_\infty-\chi)(\tilde \gamma(T_\ep))
\geq c-\ep,
$$
which proves (ii) since  $\ep$ is arbitrary. 
 \smallskip

 It remains to show (iii).  Let $K$ be given by Lemma \ref{lem:avoid} for $C:= 2\|\chi_\infty\|_\infty$ and choose $x\in \R^d$ such that  $\lg x,e\rg \geq K$. 
 \smallskip
 
It follows, again  from Lemma \ref{lem:avoid},  that the trajectory $\bar \gamma^x$ avoids the support of $\zeta$ for all positive times. Then Lemma \ref{lem:nondec} implies that the map $t\mapsto (\chi_\infty-\chi)(\bar \gamma^x(t))$ is non decreasing on $[0,+\infty)$, so that 
$$
(\chi_\infty-\chi)(x) \leq \lim_{t\to+\infty} (\chi_\infty-\chi)(\bar \gamma^x(t))= c. 
$$
Since, in view of  (ii), the opposite inequality always holds, (iii) is proved. 
\end{proof}

\section{Sharper convergence for the random perturbation}\label{sec:random}
Given a  $\Z^d$-periodic Hamiltonian satisfying \eqref{takis5}, \eqref{takis6} and \eqref{takisadditional}, 
we consider the random perturbation $H_\eta$  defined  in  \eqref{takis60}, with  $\zeta:\R^d\to [0,+\infty)$  and  $(X^\eta_k)_{k\in \Z^d}$ as in \eqref{takiszeta} and \eqref{takis100} respectively.
\smallskip

Let  $\overline H_\eta$ be the effective constant associated with $H_\eta$.  We are interested  in the behavior  of the ratio $(\overline H_\eta-\overline H)/\eta$ as $\eta\to 0$; recall that  in Corollary  \ref{cor:1bis} we proved that this ratio is bounded. 
\smallskip
%

We prove that $\lim_{\eta \to 0} (\overline H_\eta-\overline H)/\eta$ exists and equals a non-zero constant 
when $d=1$ and $0$ when $d\geq 2$.

\begin{thm}\label{thm:main2} Assume  \eqref{takis5}, \eqref{takis6}, \eqref{takisadditional}, \eqref{takis60}, \eqref{takiszeta} and  \eqref{takis100}.  When $d=1$, 
$$
\lim_{\eta\to0^+}\frac{\overline H-\overline H_\eta}{\eta}= -(\int_{Q} \frac{1}{D_pH(D\chi, x)} dx)^{-1}\int_{\text{sppt}(\zeta)} \left(H^{-1}(\zeta(x) + \overline H, x) - H^{-1}(\overline H, x) \right)dx,$$
and, when $d \geq 2$, 
$$
\lim_{\eta\to0^+}\frac{\overline H-\overline H_\eta}{\eta}=0.
$$
\end{thm}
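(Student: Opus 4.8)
The plan is to handle the two ranges of dimension by randomizing, respectively, the two arguments used for Theorem~\ref{them:main}.

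\emph{The case $d=1$.} This should be a probabilistic version of the proof of Theorem~\ref{them:main} in one dimension. By Corollary~\ref{cor:1bis}, $\overline H_\eta$ stays, for $\eta$ small, in a fixed neighborhood of $\overline H$ on which — since \eqref{takisadditional} puts the unperturbed problem strictly above $\max_x\min_p H(p,x)$ — the branch of $H^{-1}(\cdot,x)$ that contains $\chi'$ is well defined and $r\mapsto H^{-1}(r,x)$ is uniformly $C^1$ there. The corrector of the perturbed problem then solves the ODE $\chi_\eta'(x)=H^{-1}(\zeta_\eta(x)+\overline H_\eta,x)$; its derivative is $\Z$-stationary, and, $\chi_\eta$ being sublinear, Birkhoff's theorem yields the solvability condition
$$
\int_Q\E\!\left[H^{-1}(\zeta_\eta(x)+\overline H_\eta,x)\right]dx=0,
$$
which, by strict monotonicity of $r\mapsto H^{-1}(r,x)$ on the branch, pins down $\overline H_\eta$ (this is the analogue of \eqref{takis43}). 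I would then expand $\E[H^{-1}(\zeta_\eta(x)+\lambda,x)]$ in powers of $\eta$, conditioning on the finitely many bumps whose support contains $x$: the empty configuration gives $H^{-1}(\lambda,x)(1+O(\eta))$, the single-bump configurations give $\eta\sum_{k}\big(H^{-1}(\zeta(x-k)+\lambda,x)-H^{-1}(\lambda,x)\big)+O(\eta^2)$, and the remainder is $O(\eta^2)$, all uniformly in $x\in Q$. Subtracting $\int_QH^{-1}(\overline H,x)dx=0$, using \eqref{takis42ter} to write $\int_QH^{-1}(\overline H_\eta,x)dx=(\overline H_\eta-\overline H)\int_Q\frac{dx}{D_pH(\chi'(x),x)}+O(\eta^2)$, and unfolding the sum over $k$ into an integral over $\R$ (using periodicity of $H$), the solvability condition becomes $(\overline H_\eta-\overline H)\int_Q\frac{dx}{D_pH(\chi'(x),x)}+\eta\int_{\text{sppt}(\zeta)}\big(H^{-1}(\zeta(x)+\overline H,x)-H^{-1}(\overline H,x)\big)dx+O(\eta^2)=0$; dividing by $\eta$ and letting $\eta\to0^+$ identifies $\lim_{\eta\to0^+}(\overline H-\overline H_\eta)/\eta$ with the stated expression, the limit existing because $\int_Q\frac{dx}{D_pH(\chi'(x),x)}\ne0$ (its sign equals that of $D_pH(\chi',\cdot)$, which is constant by \eqref{takis42}).

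\emph{The case $d\ge2$.} The inequality $\overline H_\eta\le\overline H$ is the comparison principle, so it suffices to prove $\limsup_{\eta\to0^+}\eta^{-1}(\overline H-\overline H_\eta)\le0$. I would use the a.s. variational identity $-\overline H_\eta=\lim_{t\to\infty}t^{-1}\inf\{\int_0^t(L(\dot\gamma,\gamma)+\zeta_\eta(\gamma))\,ds:\gamma(0)=0\}$ (Souganidis~\cite{S}) and, for each fixed $\ep>0$, exhibit competitor trajectories $\gamma_t$ with $\limsup_{t\to\infty}t^{-1}\int_0^t(L(\dot\gamma_t,\gamma_t)+\zeta_\eta(\gamma_t))\,ds\le-\overline H+C\ep\eta$ almost surely; this gives $\overline H_\eta\ge\overline H-C\ep\eta$ for $\eta$ small, hence the claim on letting $\ep\to0$. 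Away from the bumps $\gamma_t$ follows an optimal curve $\bar\gamma^{x}$ of $\chi$, so that \eqref{takisoptimal} and \eqref{RotNumber} are available; whenever it is about to meet a bump at a lattice point $k$ with $X_k=1$ (and $k$ isolated), it is re-routed — starting when its $e$-coordinate relative to $k$ equals $-K_\ep$ — along an optimal curve of the single-bump corrector $\chi_\infty(\cdot-k)$, which, by Corollary~\ref{cor:cor}(iii), is again an optimal curve of $\chi$ once it has moved past the bump (i.e. once that $e$-coordinate exceeds the constant $K$ of Corollary~\ref{cor:cor}), so that $\gamma_t$ rejoins the flow of $\chi$. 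Telescoping \eqref{takisoptimal} along the $\chi$-pieces and \eqref{takisoptimalinfty} (translated by $k$) along the crossing pieces, the total action of $\gamma_t$ on $[0,t]$ equals $-t\overline H$ plus a bounded boundary term plus, for every bump crossed, the quantity $(\chi_\infty-\chi)(\text{entry point}-k)-c$, which by Corollary~\ref{cor:cor}(ii),(iv) is nonnegative and at most $\ep$. Hence $t^{-1}\int_0^t(L+\zeta_\eta)(\gamma_t,\dot\gamma_t)\,ds\le-\overline H+\ep\,N_t/t+o(1)$, where $N_t$ is the number of bumps $\gamma_t$ crosses on $[0,t]$.

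\emph{The main obstacle.} Everything then hinges on showing that, almost surely, $N_t\le C\eta t$ for $t$ large, and that the ``clusters'' of mutually close bumps — around which the single-bump description of Corollary~\ref{cor:cor} does not directly apply and which must be crossed by a crude curve at bounded cost — contribute only $o(\eta t)$ to the action. The difficulty is the feedback loop between the random trajectory and the bumps it meets: each detour shifts $\gamma_t$ transversally, so the set of lattice cells it visits is itself $\omega$-dependent. I would resolve this by noting that a single-bump detour avoids the bump cell and, by the ``cylinder'' structure of Corollary~\ref{cor:cor}, stays within a bounded tube around $k$, so an encounter with an isolated bump costs $\le\ep$ and reveals no other bump; and that, by ergodicity of $\tilde\sigma$ and Birkhoff's theorem, the unperturbed flow spends only a bounded fraction of time in the periodic pattern, whence a Chernoff/Borel--Cantelli estimate for the i.i.d. family $(X_k)$ bounds the number of bumps and of clusters met along $\gamma_t$ by $C\eta t$ and $C\eta^2 t$ respectively. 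Controlling the accumulated transversal drift so that the flow does not escape the region where these estimates hold is the technical heart of the argument — as the authors signal, ``most of the argument then aims at fixing all the necessary technicalities in the construction of that particular modified trajectory''. An essentially equivalent route is to establish a random analogue of Lemma~\ref{lem:keykey}, bounding $\liminf_{\eta\to0}\eta^{-1}(\overline H_\eta-\overline H)$ below by an integral of $(\chi_\infty-\chi)$-increments against the invariant measure and then using Corollary~\ref{cor:cor} to see that this integral vanishes.
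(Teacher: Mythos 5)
Your $d=1$ argument is in substance identical to the paper's: both write the randomly perturbed cell problem as the ODE $\chi_\eta'=H^{-1}(\zeta_\eta+\overline H_\eta,\cdot)$ along the branch selected in Section~2, deduce from stationarity of $\chi_\eta'$ and sublinearity of $\chi_\eta$ the solvability condition $\E\int_Q H^{-1}(\zeta_\eta(x)+\overline H_\eta,x)\,dx=0$, expand the Bernoulli expectation, Taylor-expand in $S_\eta:=\overline H_\eta-\overline H$ using the a~priori bound $|S_\eta|\lesssim\eta$ from Corollary~\ref{cor:1bis}, and identify the leading coefficient via \eqref{takis42ter}. The only difference is that the paper assumes $\text{sppt}(\zeta)\subset Q$ ``for simplicity'' and so keeps only a single-bump term, while you carry out the general multi-bump conditioning and unfold the sum over $k$ to an integral over $\text{sppt}(\zeta)$ by periodicity; that is a harmless refinement of the same computation.

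For $d\ge2$ your plan matches the paper's skeleton precisely: characterize $\overline H_\eta$ variationally via \cite{S}, build a random competitor that follows $\bar\gamma$ between bumps and is re-routed along a translate of $\tilde\gamma$ near each isolated bump, and use Corollary~\ref{cor:cor}(ii)--(iv) to bound the cost of each detour by $\ep$. But, as you flag yourself, there is a genuine gap precisely at what you call the technical heart: you do not control the feedback between the $\omega$-dependent trajectory and the bump configuration it samples, you assert rather than justify that an isolated detour ``reveals no other bump'', and the Chernoff/Borel--Cantelli route is only sketched. The paper closes this gap differently and more directly. It works with expectations rather than a.s.\ limits, via $\lim_n\E[\theta(\tau_n)/\tau_n]=\overline H-\overline H_\eta$, and builds the trajectory slab-by-slab over $E_{r_n,r_{n+1}}$ of width $R$, distinguishing at each stage the three events $A_{n,0}$ (no bump near $\bar\gamma^{x_n}$ in the interior slab), $A_{n,1}$ (a unique such bump, no other bump in a $K_R$-ball) and $A_{n,2}$ (everything else, fall back to $\bar\gamma^{x_n}$). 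The decisive ingredient you are missing is the adaptedness of Lemma~\ref{lem.measurable}: $x_n$, $\tau_n$ and $\gamma|_{[0,\tau_n]}$ are $\mathcal F^-_{r_n-R_0}$-measurable, hence \emph{independent} of the Bernoulli variables in the slab ahead. This kills the feedback loop with no Borel--Cantelli and allows a direct slab-wise computation: $\E[R_{n,0}]\le C\eta$, $\E[R_{n,2}]\le C_R\eta^2$, and $\E[R_{n,1}]\le(C+C_1R\ep)\eta+C_R\eta^2$ using $\P[A_{n,1}]\le C_1R\eta$ (Lemma~\ref{lem.PAn1}). Summing over slabs and sending $R\to\infty$, then $\ep\to0$, finishes the proof. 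Your version would need either an analogue of this adaptedness mechanism or a substantially more careful almost-sure argument than the one you outline.
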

%
A discussion similar to the one after Theorem~\ref{them:main} explains the difference between the one and the multi-dimensional case. 
\smallskip

The proof for $d=1$ makes strong use of the fact that in this case there exist  ``almost explicit formulae'' for 
$\overline H$ and $\overline H_\eta$ and follows along the lines of the proof of the $d=1$ limit in  Theorem~\ref{them:main}.
\smallskip

\subsection*{\it Proof of Theorem~\ref{thm:main2}  for $d=1$}  For $\eta>0$ we consider the cell problems  \eqref{cp}  and 
\be\label{takis300}
H_\eta(\chi_{\eta,x}, x)=\overline H_\eta \ \text{in} \  \R,
\ee
with $H_\eta$ as in \eqref{takis60}. We remark that, since we work on the real line and given the assumptions on $H$, \eqref{takis300} has a strictly sublinear at infinity solution; see Lions and Souganidis  \cite{lions2003correctors}. Moreover,  the solutions  of  \eqref{cp} and  \eqref{takis300}  are in $C^{1,1}$ with bounds independent of $\eta$. Finally we recall that the solution $\chi$ of \eqref{cp} is  $\Z-$ periodic.
\smallskip

Following the discussion in the subsection about the one-dimensional  problem as well as the proof of Theorem
~\ref{them:main}  for $d=1$,   we  rewrite \eqref{takis300}
as the ode 
\be\label{takis301}
\chi' _{\eta}(x)=H^{-1}(\zeta_\eta(x) +\overline H_\eta, x) \ \text{ in } \ \R,
\ee 
together with condition 
\be\label{takis302}
\E\int_{Q} H^{-1}(\zeta_\eta(x) +\overline H_\eta, x) dx=0,
\ee
where $r \to H^{-1}(r, x)$ is the same branch for the inverse of $H$ we used for \eqref{takisode}.
\smallskip

Let $S_\eta:=\overline H_\eta - \overline H$ and recall that, in view of  Corollary~\ref{cor:1}, $ 0 \leq -S_\eta\leq C\eta$.  
\smallskip

Combining  \eqref{takis41} with $R=1$ and \eqref{takis302},   we find
\begin{equation}\label{takis303}
\E\int_{Q}  H^{-1}(\zeta_\eta (x) +\overline H_\eta, x)=\int_Q H^{-1}(\overline H, x) dx.
\end{equation}
\smallskip

In what follows, for simplicity we assume that $\text{sppt}(\zeta) \subset Q$. Otherwise we need to account for lower order terms in $\eta^2$; we leave the details to the reader.
\smallskip

Since, in view of \eqref{takis61} and the assumed Bernoulli law, the left hand side of \eqref{takis302} can be evaluated explicitly, we rewrite  \eqref{takis303} as
\be\label{takis304}
(1-\eta) \int_Q  H^{-1}(S_\eta + \overline H, x)dx + \eta\int_Q  H^{-1}(\zeta(x) +S_\eta + \overline H, x)dx= \int_Q H^{-1}(\overline H, x)dx.
\ee

The strict convexity of $H$ and the bound on $S_\eta$ in Corollary~\ref{cor:1bis} yield, as in the proof of Theorem~\ref{them:main} for $d=1$,  
$$ |H^{-1}(\zeta (x) +S_\eta + \overline H, x) -  H^{-1}(\zeta(x) + \overline H, x) - D_r H^{-1}(\zeta (x)  +\overline H, x)S_\eta| \lesssim (S_\eta)^2\lesssim \eta^{2}$$  
and 
$$ |H^{-1}(S_\eta + \overline H, x) -  H^{-1}( \overline H, x) - D_r H^{-1}(\overline H, x)S_\eta | \lesssim (S_\eta)^2\lesssim \eta^{2}$$  
Integrating over $Q$ and using that 
$$\int_Q\left(D_rH^{-1}(\zeta(x) +S_\eta +\overline H,x) - D_rH^{-1}(\overline H,x)\right)dx \lesssim 1$$
in \eqref{takis304},  we get 
$$
\eta \left[ \int_Q(H^{-1}(\zeta (x) +\overline H,x)-H^{-1}(\overline H,x))dx\right] + S_\eta \left[\int_Q D_rH^{-1}(\overline H,x))dx + \text{O}(\eta)\right] = \text{O}(S_\eta^2),$$
and, since $ |S_\eta| =\text{O}(\eta),$ as $\eta \to 0$,
$$ \frac{S_\eta}{\eta} \to  -(\int_{Q} \frac{1}{D_pH(D\chi, x)} dx)^{-1}\int_{\text{sppt}(\zeta)} \left(H^{-1}(\zeta(x) + \overline H, x) - H^{-1}(\overline H, x) \right)dx.$$
\qed
\subsection*{The multidimensional random problem}
In higher dimensions, that is when $d\geq 2$,  the proof is rather delicate and more involved. It is based on constructing  a suitable (random) trajectory $\gamma$ along which it is possible to control the quantity $\int_0^t (L(\dot \gamma, \gamma)+\zeta_\eta(\gamma)+\overline H)ds$.   This is accomplished combining the optimal trajectories  of the unperturbed and the ``one bump'' problems.

The proof is divided  into four parts.  In the first we introduce some notation, in the second we explain the construction of the random trajectory and    
in the third we provide the key estimates. 
The argument  is completed in the fourth part. 
%
%
\subsection*{Notation}\label{subset.notation}
In what follows we denote by $\bar \gamma^x$ and $\tilde \gamma^x$ the Borel measurable with respect to $x$ optimal paths  for $\chi$ and $\chi_\infty$, that is, for any $x\in \R^d$ and any $0\leq s \leq t$, 
$$
\chi(\bar \gamma^x (s))= \int_{s}^t \left( L(\dot {\bar \gamma}^x(\tau),\bar \gamma^{x}(\tau))+\overline H\right)d\tau +\chi(\bar \gamma^x(t))m
$$
and 
$$
\chi_\infty (\tilde \gamma^x (s))= \int_{s}^t \left( L(\dot {\tilde\gamma}^x(\tau),\tilde \gamma^{x}(\tau))+\overline H+\zeta(\tilde \gamma^x(\tau)\right) d\tau +\chi_\infty(\tilde \gamma^x(t)).
$$
Set
$$
 \zeta_\infty(z):= \sum_{k\in \Z^d} \zeta(z-k),
 $$
and note for later use that 
$$
\E\left[ \zeta_\eta(x)\right] = \eta \zeta_\infty (x). 
$$
Fix  $D>0$ be such that ${\rm sppt}(\zeta)\subset B(0,D)$ and  $\ep>0$. Then   Lemma \ref{lem:avoid} and Corollary \ref{cor:cor} imply the existence of  $R_0\geq D$ and $T_0>0$ such that 
\be \label{toto1}
 \chi_\infty (x)= \chi(x)+c \text { for any $x\in \R^d$ with $\lg x,e\rg \geq R_0$},
\ee
\be\label{toto2}
(\chi_\infty-\chi)(x) \leq c+\ep \ \text{ for any $x\in \R^d$ with $\lg x,e\rg \leq -R_0$,} 
\ee
and,  if $\gamma$ is a trajectory such that 
$$
\int_0^t \left( L(\dot \gamma, \gamma)+ \overline H\right)ds\leq 2 (\|\chi\|_\infty+\|\chi_\infty\|_\infty),
$$
then, for all $  t\geq 0$,
\be\label{bla1}
(i)~\inf_{s\geq t} \lg \gamma(s)-\gamma(t), e\rg \geq -R_0 \quad \text{and}  \quad 
(ii)~\inf_{s\geq t+T_0} \lg \gamma(s)-\gamma(t), e\rg \geq R_0. 
\ee


We also set 
\be\label{defR0prime}
R_0':= R_0+ \|D_pH(D\chi,\cdot)\|_\infty T_0,
\ee


and, for $s<t$, $E_s^-$, $E_t^+$ and $E_{s,t}$ are the sets 
$$\begin{cases}
E_s^-:=\left\{y\in \R^d: \lg y,e\rg <s\right\}, 
\;
E_t^+:=\left\{y\in \R^d: \lg y,e\rg \geq t\right\},\; \text{and} \\[1mm] 
E_{s,t}:=\left\{y\in \R^d:s\leq \lg y,e\rg <t\right\}.
\end{cases}$$
Finally,  ${\mathcal F}_s^-$, 
${\mathcal F}_t^+$ and ${\mathcal F}_{s,t}$ are the $\sigma-$algebras generated by the random variables $X_k$ with $k\in \Z^d$ and $k\in E_s^-$, $k\in E_t^+$ and $k\in E_{s,t}$ respectively. 

\smallskip
In what follows we fix  $R\geq 2R_0'$, $n\in \N$ and $\eta>0$ and  we set, for all $n\in \N,$
$
r_n:= nR. 
$
\smallskip

Throughout the proof $C$ is a generic constant, which may change from line to line, depends on the data  and on $\ep$ by the choice of  $R_0$, $T_0$ or $R_0'$, but not on $R$, $\eta$ or $n$. In addition $C_R$ is  a constant which may also depend on $R$.

\subsection*{The random trajectory}\label{subset.deftraj}
We  construct  by induction a sequence of random  points  $(x_n)_{n\in \N}$ and times $(\tau_n)_{n\in \N}$ and, then, on each time interval $[\tau_n,\tau_{n+1}]$, a random trajectory $\gamma$. 
\smallskip

We define $\tau_0$, $x_0$ and $\gamma$ on $[0,\tau_0]$ as follows:
$$
\tau_0:=\inf\left\{t\geq 0, \; \inf_{s\geq t} \lg \bar \gamma^0(s),e\rg \geq r_0\right\}, \ x_0:= \bar \gamma^0(\tau_0) \ \text{and} \ \gamma:= \bar \gamma^0 \ \text{on  \ $[0,\tau_0]$.}
$$
Note that $x_0$, $\tau_0$ are deterministic with $\gamma(0)=0$, $\lg \gamma(\tau_0),e\rg=r_0=0$. 

\smallskip

Assuming next  that $x_n$ and $\tau_n$ are known, we find $x_{n+1}$, $\tau_{n+1}$ and $\gamma$ on $[\tau_n, \tau_{n+1}]$. For this we need to 
consider three  disjoint  events $A_{n,0}$, $A_{n,1}$ and $A_{n,2}$. Roughly speaking, in the event $A_{n,0}$, the perturbation $\zeta_\eta$ vanishes on the trajectory $\bar \gamma^{x_n}$ in the set  $E_{r_n + R_0', r_{n+1}-R_0}$. In $A_{n,1}$, the trajectory $\bar \gamma^{x_n}$ encounters only one bump  in   $E_{r_n + R_0', r_{n+1}-R_0}$, and there are no other bump in a large neighborhood of the trajectory. The last event is the complement of the other two. 
\smallskip

More precisely, recalling that $r_n=nR$ and that the radius $D$ is such that ${\rm sppt}(\zeta)\subset B(0,D)$, 
the event
$A_{n,0}$ is defined by the property that there is no $k\in \Z^d\cap E_{r_n + R_0', r_{n+1}-R_0}$ with $X_k=1$ and $|k-\bar \gamma^{x_n}(t)|\leq D$ for some $t\geq0$. In the event $A_{n,1}$, there exists a unique $\hat k_n\in \Z^d\cap E_{r_n + R_0', r_{n+1}-R_0}$ with $X_{\hat k_n}=1$ and $|\hat k_n-\bar \gamma^{x_n}(t)|\leq D$ for some  $t\geq 0$, but there is no other $k'\in \Z^d\cap E_{r_n + R_0', r_{n+1}-R_0} \cap B(x_n,K_R)$ such that $X_{k'}=1$; here $K_R>0$ is a large constant depending on $R$ defined
in \eqref{defKR} below.
Finally,   
$
A_{n,2}= \Omega\backslash ( A_{n,0}\cup A_{n,1}).
$
\smallskip

In $A_{n,0}\cup A_{n,2}$, 
$$
\tau_{n+1}:= \tau_n+\inf \{t\geq 0: \inf_{s\geq t} \lg \bar \gamma^{x_n}(s),e\rg\geq r_{n+1}\}, \quad  x_{n+1}:=\bar \gamma^{x_n}(\tau_{n+1}-\tau_n)
$$
and
$$
 \gamma:= \bar \gamma^{x_n}(\cdot-\tau_n) \ \text{in} \  [\tau_n,\tau_{n+1}]. 
$$
\smallskip

In $A_{n,1}$, we set $\gamma:= \bar \gamma^{x_n}(\cdot -\tau_n)$ in $[\tau_n, \tau_n+T_0]$, $z_n:=  \bar \gamma^{x_n}(T_0)$,  
$$
\tau_{n+1}:= \tau_n+T_0+ \inf \{t\geq 0: \inf_{s\geq t} \lg \tilde \gamma^{z_n-\hat k_n}(s)+\hat k_n, e\rg \geq r_{n+1}\},
$$
where $\hat k_n$ is given in the definition of $A_{n,1}$, 
$$
x_{n+1}:= \tilde \gamma^{z_n-\hat k_n}(\tau_{n+1}-\tau_n-T_0)+ \hat k_n \  \text{and} \  \gamma:= \tilde \gamma^{z_n-\hat k_n}(\cdot -\tau_n-T_0)+ \hat k_n \ \text{in} \  [\tau_n+T_0,\tau_{n+1}]. 
$$
Note that, by definition, for all $n\in \N$, 
$$
\gamma(0)=0, \; \gamma(\tau_n)=x_n\; {\rm and}\;  \lg x_n, e\rg=r_n.
$$
Moreover, by Corollary \ref{cor:takis}, the times $\tau_n$ are finite. 

\subsection*{The key properties of the construction}\label{subsec:keyppties} In the next four lemmata we study the properties of the construction above that are needed to complete the proof of the theorem. 

\begin{lem}\label{lem.bargammaErn} For any $n\in \N$ and all $t>0$, the trajectory $\bar \gamma^{x_n}$ remains in $E_{r_n}^+$. 
\end{lem}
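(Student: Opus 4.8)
The plan is to argue by induction on $n$, exploiting the defining property of the stopping time $\tau_n$ together with the ``one-sided'' bound \eqref{bla1}(i). First I would record that, by construction, $\gamma(\tau_n)=x_n$ with $\langle x_n,e\rangle = r_n$, and that on the preceding interval the trajectory $\bar\gamma^{x_n}$ (or its concatenation with a $\tilde\gamma$-piece) is optimal for $\chi$ or $\chi_\infty$, hence satisfies the energy bound
$$
\int_0^t\bigl(L(\dot{\bar\gamma}^{x_n},\bar\gamma^{x_n})+\overline H\bigr)\,ds\leq 2\bigl(\|\chi\|_\infty+\|\chi_\infty\|_\infty\bigr)
$$
for all $t\geq 0$, since $\chi$ and $\chi_\infty$ are bounded and the running cost telescopes against the boundary values. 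This is precisely the hypothesis under which \eqref{bla1} applies, so we get $\inf_{s\geq t}\langle \bar\gamma^{x_n}(s)-\bar\gamma^{x_n}(t),e\rangle \geq -R_0$ for all $t\geq 0$.

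Next I would use the definition of $\tau_n$: in every one of the three events $A_{n,0},A_{n,1},A_{n,2}$ the point $x_n$ is chosen as $\bar\gamma^{x_{n-1}}(\tau_n-\tau_{n-1})$ (or the analogous $\tilde\gamma$-endpoint) at the first time the relevant trajectory satisfies $\inf_{s\geq t}\langle \cdot(s),e\rangle\geq r_n$. Thus by the very definition of the infimum-over-later-times threshold, $\bar\gamma^{x_n}$ starting from $x_n$ inherits $\inf_{s\geq 0}\langle \bar\gamma^{x_n}(s),e\rangle\geq r_n$, which is exactly the assertion $\bar\gamma^{x_n}(t)\in E_{r_n}^+$ for all $t>0$ — one only has to check the base case $n=0$, where $\tau_0$ and $x_0=\bar\gamma^0(\tau_0)$ are defined by the same recipe with $r_0=0$, so that $\langle\bar\gamma^{x_0}(s),e\rangle\geq 0$ for all $s\geq 0$ by construction.

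The one genuinely delicate point — and the step I expect to be the main obstacle — is making sure the energy bound really does hold along $\bar\gamma^{x_n}$ uniformly, so that \eqref{bla1} is legitimately available: in the events $A_{n,1}$ the trajectory we have actually been following on $[\tau_{n-1},\tau_n]$ is a concatenation of a $\bar\gamma$-piece of length $T_0$ and a $\tilde\gamma$-piece, and one must verify that the future evolution is still governed by the $\chi$-optimal flow $\bar\gamma^{x_n}$ (which it is, since $x_n$ is redefined at time $\tau_n$ and from then on we restart the unperturbed optimal trajectory) and that $\bar\gamma^{x_n}$, being a calibrated curve for $\chi$ started at $x_n$, satisfies $\int_0^t(L(\dot{\bar\gamma}^{x_n},\bar\gamma^{x_n})+\overline H)\,ds=\chi(x_n)-\chi(\bar\gamma^{x_n}(t))\leq 2\|\chi\|_\infty\leq 2(\|\chi\|_\infty+\|\chi_\infty\|_\infty)$. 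Once this is in place, the lemma follows immediately from \eqref{bla1}(i) applied at $t=0$: $\langle\bar\gamma^{x_n}(s)-x_n,e\rangle\geq -R_0$ does not quite give $\geq r_n$ directly, so one must instead invoke the \emph{definition of $\tau_n$ via $\inf_{s\geq t}$} rather than \eqref{bla1} — i.e., the claim is essentially tautological from how $\tau_n$ and $x_n$ were set up, and the role of \eqref{bla1} (hence of the energy bound) is only to guarantee that this infimum is attained at a finite time, which is Corollary~\ref{cor:takis}. I would therefore present the proof as: (1) note $x_n$ is an endpoint of an optimal trajectory hence the energy bound holds; (2) invoke Corollary~\ref{cor:takis} to see $\tau_n<\infty$ and the threshold is meaningful; (3) conclude $\bar\gamma^{x_n}(t)\in E_{r_n}^+$ for all $t>0$ directly from the definition of $\tau_n$ and $x_n$.
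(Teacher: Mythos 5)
The proposal correctly handles the base case and the events $A_{n,0}\cup A_{n,2}$, but it has a genuine gap in the event $A_{n,1}$, and the gap is precisely where the paper's proof does its real work.

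In $A_{n-1,0}\cup A_{n-1,2}$ the argument you sketch is indeed immediate: there $x_n=\bar\gamma^{x_{n-1}}(\tau_n-\tau_{n-1})$ and the stopping condition is phrased directly in terms of $\bar\gamma^{x_{n-1}}$, so the flow property $\bar\gamma^{x_n}(t)=\bar\gamma^{x_{n-1}}(t+\tau_n-\tau_{n-1})$ makes the conclusion literally the same statement as the definition of $\tau_n$. Your claim that the lemma is ``essentially tautological'' is correct in those two events only. In $A_{n-1,1}$, however, the stopping time $\tau_n$ is defined via
$\inf_{s\geq t}\langle \tilde\gamma^{z_{n-1}-\hat k_{n-1}}(s)+\hat k_{n-1},e\rangle \geq r_n$,
so the infimum condition you are trying to ``inherit'' concerns the $\tilde\gamma$-trajectory, while the lemma is about $\bar\gamma^{x_n}$. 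These are, a priori, two different curves emanating from $x_n$. Your parenthetical ``which it is, since $x_n$ is redefined at time $\tau_n$ and from then on we restart the unperturbed optimal trajectory'' confuses the constructed path $\gamma$ (which is indeed reset to a $\bar\gamma$-piece at $\tau_n$) with the assertion of the lemma, which concerns the full half-orbit $\bar\gamma^{x_n}([0,\infty))$. Nothing in the definitions makes the stopping condition for $\tilde\gamma$ transfer to $\bar\gamma^{x_n}$ automatically.

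The missing step is exactly the identity labelled \eqref{claimtilde=} in the paper: one must prove that, for $t\geq\tau_{n+1}-\tau_n-T_0$, the shifted $\tilde\gamma$-trajectory coincides with $\bar\gamma^{x_{n+1}}$. This is not a bookkeeping fact; it requires showing that past that time $\tilde\gamma^{z_n-\hat k_n}$ stays in the region $\{\langle\cdot,e\rangle\geq R_0\}$ (so $\chi_\infty=\chi+c$ by \eqref{toto1} and $\zeta$ vanishes along it), deducing that $\tilde\gamma^{z_n-\hat k_n}$ is then a calibrated curve for $\chi$, and invoking uniqueness of the $\chi$-optimal flow to identify it with a $\bar\gamma$-trajectory; periodicity of the flow closes the argument. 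Your discussion of the energy bound and \eqref{bla1}(i) does not substitute for this: the energy bound along $\bar\gamma^{x_n}$ is trivially $\leq 2\|\chi\|_\infty$ by calibration, and as you yourself observe \eqref{bla1}(i) only yields the weaker inclusion $\bar\gamma^{x_n}(t)\in E_{r_n-R_0}^+$, not $E_{r_n}^+$. So the bound is not the bottleneck; the identification of trajectories is, and that is the one step the proposal leaves unaddressed.
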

\begin{proof} We argue using induction. Observe that, in view of the choice  of $x_0$ and $\tau_0$, the claim is immediate for $n=0$, 
and assume that the result holds for some $n$. 
\smallskip

The choice of $\tau_{n+1}$ 
in $A_{n,0}\cup A_{n,2}$, yields  that, for all $t\geq 0$, 
 $\bar \gamma^{x_{n+1}}(t)= \bar \gamma^{x_n}(t+\tau_{n+1})$ belongs to $E_{r_{n+1}}^+$. 
\smallskip

In view of the definition of $\tau_{n+1}$ in $A_{n,1}$, to conclude we need to show that, for all $t\geq \tau_{n+1}-\tau_n-T_0,$ 
\be\label{claimtilde=}
 \tilde \gamma^{z_n-\hat k_n}(t)+\hat k_n= \bar \gamma^{x_{n+1}}(t-\tau_{n+1}+\tau_n+T_0). 
\ee
Note first that  \eqref{claimtilde=} holds for $t= \tau_{n+1}-\tau_n-T_0$. Then 
the definition of $\tau_{n+1}$ and the fact that $\hat k_n\in E_{r_n+R_0',r_{n+1}-R_0}$ yield that, for all $t\geq \tau_{n+1}-\tau_n-T_0, $
\be\label{tildegammaegeqR0}
\lg \tilde \gamma^{z_n-\hat k_n}(t), e\rg \geq r_{n+1}-\lg \hat k_n , e\rg \geq R_0.
\ee
It follows,  in view of \eqref{toto1}, that for all $t\geq \tau_{n+1}-\tau_n-T_0, $
\be\label{chiinfty=chi}
\chi_\infty( \tilde \gamma^{z_n-\hat k_n}(t)) = \chi(\tilde \gamma^{z_n-\hat k_n}(t)).
\ee
The optimality of $\tilde \gamma^{z_n-\hat k_n}$ for $\chi_\infty$  implies  that, for any $t\geq \tau_{n+1}-\tau_n-T_0$, 
$$
\chi_\infty(x_{n+1})= \int_{\tau_{n+1}-\tau_n-T_0}^t ( L(\dot {\tilde\gamma}^{z_n-\hat k_n}, \tilde \gamma^{z_n-\hat k_n})+\overline H+\zeta(\tilde \gamma^{z_n-\hat k_n})) ds +\chi_\infty(\tilde \gamma^{z_n-\hat k_n}(t)), 
$$
where,   by \eqref{tildegammaegeqR0},  $\zeta(\tilde \gamma^{z_n-\hat k_n}(s)) =0$ on  $[\tau_{n+1}-\tau_n-T_0,t]$.
\smallskip

Hence, using \eqref{chiinfty=chi}, we find 
$$
\chi(x_{n+1})= \int_{\tau_{n+1}-\tau_n-T_0}^t \left( L(\dot {\tilde\gamma}(s),\tilde \gamma^{z_n-\hat k_n}(s))+\overline H\right) ds +\chi(\tilde \gamma(t)).
$$
It follows that  $\tilde \gamma^{z_n-\hat k_n}$ is optimal for $\chi$ in $[\tau_{n+1}-\tau_n-T_0, +\infty)$, and, in view of the  uniqueness of the optimal solution, we obtain that, for all  $ t\geq \tau_{n+1}-\tau_n-T_0,$
$$
\tilde \gamma^{z_n-\hat k_n}(t) = \bar \gamma^{\tilde \gamma^{z_n-\hat k_n}(\tau_{n+1}-\tau_n-T_0)}(t-\tau_{n+1}+\tau_n+T_0). 
$$
Using that  the map $x\to \bar \gamma^x$ is periodic, we get that,  for any $t\geq \tau_{n+1}-\tau_n-T_0$, 
$$
\tilde \gamma^{z_n-\hat k_n}(t) +\hat k_n= \bar \gamma^{\tilde \gamma^{z_n-\hat k_n}(\tau_{n+1}-\tau_n-T_0)+\hat k_n}(t-\tau_{n+1}+\tau_n+T_0)
=\bar \gamma^{x_{n+1}}(t-\tau_{n+1}+\tau_n+T_0),
$$
which is \eqref{claimtilde=}.
\end{proof}

\begin{lem}\label{lem.measurable} For any $n\in \N_0$, $x_n$, $\tau_n$ and the restriction of $\gamma$ to $[0, \tau_n]$ are ${\mathcal F}^-_{r_n-R_0}$-measurable, while the events $A_{n,0}$, $A_{n,1}$ and $A_{n,2}$ are ${\mathcal F}^-_{r_{n+1}-R_0}$-measurable.
\end{lem}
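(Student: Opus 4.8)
The plan is to establish the first assertion by induction on $n$, deducing the second one at each level as a byproduct of the induction step. The base case $n=0$ is immediate, since $x_0$, $\tau_0$ and the restriction of $\gamma$ to $[0,\tau_0]$ are deterministic, hence measurable with respect to every sub-$\sigma$-algebra of $\F$. For the inductive step I would first record two elementary facts about the $\sigma$-algebras involved, both consequences of the standing choice $R\geq 2R_0'$ (so that $R_0+R_0'\leq R$, because $R_0'\geq R_0$) together with $r_n=nR$: since $r_n-R_0<r_{n+1}-R_0$ we have ${\mathcal F}^-_{r_n-R_0}\subset {\mathcal F}^-_{r_{n+1}-R_0}$; and since $r_n+R_0'\leq r_{n+1}-R_0$, the slab $E_{r_n+R_0',\,r_{n+1}-R_0}$ lies in the half-space $E^-_{r_{n+1}-R_0}$, whence ${\mathcal F}_{r_n+R_0',\,r_{n+1}-R_0}\subset {\mathcal F}^-_{r_{n+1}-R_0}$. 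I would also fix conventions: ``the restriction of $\gamma$ to $[0,\tau_n]$'' is to be read as the random curve $t\mapsto \gamma(t\wedge\tau_n)$, a bona fide random element of $C([0,\infty);\R^d)$ since $\tau_n<\infty$ by Corollary~\ref{cor:takis}, and I will repeatedly use that $x\mapsto\bar\gamma^x$ and $x\mapsto\tilde\gamma^x$ are Borel measurable.

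Assuming the first assertion at level $n$, so that $x_n$, $\tau_n$ and $\gamma|_{[0,\tau_n]}$ are ${\mathcal F}^-_{r_n-R_0}$-measurable and hence ${\mathcal F}^-_{r_{n+1}-R_0}$-measurable, the first step is to handle the events. Since $\bar\gamma^{x_n}$ is a Borel function of $x_n$ and ${\rm sppt}(\zeta)\subset B(0,D)$, the conditions defining $A_{n,0}$, $A_{n,1}$, $A_{n,2}$ involve only $x_n$ and the variables $X_k$ with $k\in\Z^d\cap E_{r_n+R_0',\,r_{n+1}-R_0}$; in the case of $A_{n,1}$ one further restricts some indices to $B(x_n,K_R)$, but $K_R$ is deterministic, $B(x_n,K_R)$ is ${\mathcal F}^-_{r_n-R_0}$-measurable, and the $X_{k'}$ actually consulted still range over a subset of that slab. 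By the inductive hypothesis and the second inclusion above, this data is ${\mathcal F}^-_{r_{n+1}-R_0}$-measurable; and the defining conditions — existence of $t\geq 0$ with $|k-\bar\gamma^{x_n}(t)|\leq D$, uniqueness of such an index, absence of further such indices — are Borel functionals of that data, $\bar\gamma^{x_n}$ being continuous. Hence $A_{n,0}$, $A_{n,1}$, $A_{n,2}$ are ${\mathcal F}^-_{r_{n+1}-R_0}$-measurable, which is the second assertion.

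The second step is to propagate the first assertion to level $n+1$ by inspecting the construction on each event. On $A_{n,0}\cup A_{n,2}$ the time $\tau_{n+1}$, the point $x_{n+1}$ and the piece of $\gamma$ on $[\tau_n,\tau_{n+1}]$ are explicit Borel functionals of $(x_n,\tau_n)$ alone — a hitting-time functional of the deterministic flow $\bar\gamma^{x_n}$ — hence ${\mathcal F}^-_{r_n-R_0}$-measurable, a fortiori ${\mathcal F}^-_{r_{n+1}-R_0}$-measurable, on that event. On $A_{n,1}$ the only new ingredient is the index $\hat k_n$, which is uniquely singled out by the configuration inside $\Z^d\cap E_{r_n+R_0',\,r_{n+1}-R_0}$, so $\hat k_n\,\mathbf{1}_{A_{n,1}}$ is ${\mathcal F}^-_{r_{n+1}-R_0}$-measurable; consequently so are $z_n=\bar\gamma^{x_n}(T_0)$, the shifted point $z_n-\hat k_n$, the path $\tilde\gamma^{z_n-\hat k_n}$, the time $\tau_{n+1}$, the point $x_{n+1}$ and the piece of $\gamma$ on $[\tau_n,\tau_{n+1}]$, again by the Borel measurability of $x\mapsto\bar\gamma^x$ and $x\mapsto\tilde\gamma^x$. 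Finally $\gamma|_{[0,\tau_{n+1}]}$ agrees with $\gamma|_{[0,\tau_n]}$ on $[0,\tau_n]$ and with the newly built piece on $[\tau_n,\tau_{n+1}]$, so $t\mapsto\gamma(t\wedge\tau_{n+1})$ is ${\mathcal F}^-_{r_{n+1}-R_0}$-measurable, closing the induction.

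I expect the only real subtlety to be the bookkeeping behind these last two steps: making ``$\gamma$ restricted to $[0,\tau_n]$'' a genuine measurable random element while $\tau_n$ is itself random, and — more essentially — checking scrupulously that the construction on $[\tau_n,\tau_{n+1}]$ never consults an $X_k$ with $\lg k,e\rg\geq r_{n+1}-R_0$. This is exactly what the various margins buy us: the bumps that can influence the trajectory between times $\tau_n$ and $\tau_{n+1}$ lie in the slab $E_{r_n+R_0',\,r_{n+1}-R_0}$ — on $A_{n,1}$ through the single index $\hat k_n$, on $A_{n,0}\cup A_{n,2}$ through none — while the events $A_{n,j}$ depend only on $x_n$ and on the $X_k$ in that same slab; and $R\geq 2R_0'$ places that slab strictly to the left of the hyperplane $\{\lg y,e\rg=r_{n+1}-R_0\}$, so every random variable used at stage $n$ already belongs to ${\mathcal F}^-_{r_{n+1}-R_0}$.
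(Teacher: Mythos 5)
Your proof is correct and follows essentially the same inductive scheme as the paper's: base case $n=0$ deterministic, then measurability of the events $A_{n,j}$ from $x_n,\tau_n$ and the $X_k$ with $k$ in the slab $E_{r_n+R_0',\,r_{n+1}-R_0}$, then measurability of $x_{n+1},\tau_{n+1}$ and the new piece of $\gamma$. The paper's version is terser; you supply the bookkeeping (the inclusions of $\sigma$-algebras coming from $R\geq 2R_0'$, the reading of $\gamma|_{[0,\tau_n]}$ as $t\mapsto\gamma(t\wedge\tau_n)$, and Borel measurability of the flows) that it leaves implicit.
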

\begin{proof} We argue again by induction. The claim is true for $n=0$
since $x_0$, $\tau_0$ and the restriction of $\gamma$ to $[0, \tau_0]$ are deterministic. 
\smallskip

We assume next that $x_n$, $\tau_n$ and the restriction of $\gamma$ to $[0, \tau_n]$ are ${\mathcal F}^-_{r_n-R_0}$-measurable. 
Knowing $x_n$ and $\tau_n$, it follows that 
$A_{n,0}$, $A_{n,1}$ and $A_{n,2}$ belong to ${\mathcal F}^-_{r_{n+1}-R_0}$, and the induction assumption,  
implies that $A_{n,0}$, $A_{n,1}$ and $A_{n,2}$ are ${\mathcal F}^-_{r_{n+1}-R_0}$-measurable. It follows from  their definition that $x_{n+1}$, $\tau_{n+1}$ and the restriction of $\gamma$ to $[0, \tau_{n+1}]$ are ${\mathcal F}^-_{r_{n+1}-R_0}$-measurable. 
\end{proof}

\begin{lem} For any $n\in \N_0$, $\gamma(t)$ belongs to $E^+_{r_n}$ for all $t\geq \tau_n$.
\end{lem}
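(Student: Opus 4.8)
The plan is to prove the bound separately on each time interval $[\tau_n,\tau_{n+1}]$ and then patch the pieces together using the nesting of the half-spaces $E^+_s$. Fix $n$ and recall from the construction that $\gamma(\tau_n)=x_n$ with $\lg x_n,e\rg=r_n$, so that $\gamma(\tau_n)\in E^+_{r_n}$. On the event $A_{n,0}\cup A_{n,2}$ the trajectory is $\gamma(\cdot)=\bar\gamma^{x_n}(\cdot-\tau_n)$ on all of $[\tau_n,\tau_{n+1}]$, and Lemma~\ref{lem.bargammaErn}, together with $\bar\gamma^{x_n}(0)=x_n\in E^+_{r_n}$, gives $\bar\gamma^{x_n}(s)\in E^+_{r_n}$ for every $s\geq 0$; hence $\gamma(t)\in E^+_{r_n}$ for $t\in[\tau_n,\tau_{n+1}]$. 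The same remark disposes of the sub-interval $[\tau_n,\tau_n+T_0]$ in the event $A_{n,1}$, where $\gamma$ still coincides with $\bar\gamma^{x_n}(\cdot-\tau_n)$.

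The only genuinely new situation is $t\in[\tau_n+T_0,\tau_{n+1}]$ on $A_{n,1}$, where $\gamma(t)=\tilde\gamma^{z_n-\hat k_n}(t-\tau_n-T_0)+\hat k_n$ with $z_n=\bar\gamma^{x_n}(T_0)$, and this is the heart of the argument. First I would check that both $\bar\gamma^{x_n}$ and $\tilde\gamma^{z_n-\hat k_n}$ satisfy the energy bound appearing in \eqref{bla1}: by the optimality identity \eqref{takisoptimal} one has $\int_0^t(L(\dot{\bar\gamma}^{x_n},\bar\gamma^{x_n})+\overline H)\,ds=\chi(x_n)-\chi(\bar\gamma^{x_n}(t))\leq 2\|\chi\|_\infty$, and by optimality of $\tilde\gamma^{z_n-\hat k_n}$ for $\chi_\infty$ together with $\zeta\geq 0$ one gets $\int_0^t(L(\dot{\tilde\gamma}^{z_n-\hat k_n},\tilde\gamma^{z_n-\hat k_n})+\overline H)\,ds\leq \chi_\infty(z_n-\hat k_n)-\chi_\infty(\tilde\gamma^{z_n-\hat k_n}(t))\leq 2\|\chi_\infty\|_\infty$. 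Applying \eqref{bla1}(ii) to $\bar\gamma^{x_n}$ with $t=0$ yields $\lg z_n-x_n,e\rg=\lg\bar\gamma^{x_n}(T_0)-\bar\gamma^{x_n}(0),e\rg\geq R_0$, hence $\lg z_n,e\rg\geq r_n+R_0$; applying \eqref{bla1}(i) to $\tilde\gamma^{z_n-\hat k_n}$ with $t=0$ yields $\lg\tilde\gamma^{z_n-\hat k_n}(s)-(z_n-\hat k_n),e\rg\geq -R_0$ for all $s\geq 0$. Adding the two inequalities gives $\lg\tilde\gamma^{z_n-\hat k_n}(s)+\hat k_n,e\rg\geq\lg z_n,e\rg-R_0\geq r_n$ for all $s\geq 0$, so $\gamma(t)\in E^+_{r_n}$ on $[\tau_n+T_0,\tau_{n+1}]$ as well. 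The point to watch — and essentially the only obstacle — is precisely that the extra $R_0$ gained during the first $T_0$ units of the excursion along $\bar\gamma^{x_n}$ exactly compensates the backward $R_0$-fluctuation of the $\chi_\infty$-optimal trajectory after the shift by $\hat k_n$.

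It remains to upgrade "on $[\tau_n,\tau_{n+1}]$" to "for all $t\geq\tau_n$". The sequence $(\tau_m)_m$ is nondecreasing by construction, and $\tau_m\to+\infty$ since $\lg\gamma(\tau_m),e\rg=r_m=mR\to\infty$ while $\gamma$ has uniformly bounded speed (on each piece it is a Hamiltonian-flow trajectory of $\chi$ or $\chi_\infty$, both of class $C^{1,1}$). Hence every $t\geq\tau_n$ belongs to some $[\tau_m,\tau_{m+1}]$ with $m\geq n$, and by the previous steps $\gamma(t)\in E^+_{r_m}$. Since $r_m=mR\geq nR=r_n$, we have $E^+_{r_m}\subseteq E^+_{r_n}$, and therefore $\gamma(t)\in E^+_{r_n}$, as claimed.
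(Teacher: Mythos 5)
Your proof is correct and takes essentially the same approach as the paper: it reduces to each interval $[\tau_n,\tau_{n+1}]$, handles $A_{n,0}\cup A_{n,2}$ and the first part of $A_{n,1}$ via Lemma~\ref{lem.bargammaErn}, and treats the tail of $A_{n,1}$ by combining \eqref{bla1}(ii) applied to $\bar\gamma^{x_n}$ with \eqref{bla1}(i) applied to $\tilde\gamma^{z_n-\hat k_n}$ so that the gain of $R_0$ on $[\tau_n,\tau_n+T_0]$ exactly offsets the backward $R_0$-fluctuation allowed afterwards. You supply two details the paper leaves implicit, namely the verification of the energy hypothesis in \eqref{bla1} for both trajectories and the observation that $\tau_m\to\infty$ is what justifies the reduction to consecutive intervals; both are correct and welcome.
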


\begin{proof} It is enough to show that, for any $n\in \N_0$, $\gamma(t)$ belongs to $E^+_n$ for $t\in[ \tau_n,\tau_{n+1}]$.
Fix $n\in \N$. Lemma \ref{lem.bargammaErn} implies that, for all $t\geq 0$,  $\bar \gamma^{x_n}(t)\in E_{r_n}^+$, and the claim is clear in $A_{n,0}\cup A_{n,2}$. In $A_{n,1}$, we have $\gamma(t)= \bar \gamma^{x_n}(t-\tau_n)$ for $t\in [\tau_n,\tau_n+T_0]$, and again $\gamma(t)\in  E_{r_n}^+$ in this interval. Moreover, \eqref{bla1}(ii) yields that 
$$
\lg z_n,e \rg = \lg \bar \gamma^{x_n}(T_0),e \rg \geq \lg x_n,e\rg + R_0= r_n+R_0,
$$
and, hence, the choice of $R_0$ in \eqref{bla1} yields  
$$
\inf_{s\geq 0} \lg \tilde \gamma^{z_n-\hat k_n}(s)+\hat k_n, e\rg \geq \lg z_n,e\rg -R_0=  r_n. 
$$
\end{proof}
\begin{lem}\label{lem:limtaun} There exists $C_0>0$, which is  independent of $\ep$, $R$, $n$ and $\eta$,  such that,  for all $n\in \N$, $C_0^{-1}R\leq \tau_{n+1}-\tau_n\leq C_0R$. In particular, as $n\to \infty,$ and almost surely,  $\tau_n\to +\infty.$ 
\end{lem}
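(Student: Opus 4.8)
The plan is to read $\tau_{n+1}-\tau_n$ as the time it takes the constructed trajectory $\gamma$ to advance the fixed amount $R$ in the direction $e$: by construction $\gamma(\tau_j)=x_j$ with $\lg x_j,e\rg=r_j$, so that $\lg\gamma(\tau_{n+1})-\gamma(\tau_n),e\rg=r_{n+1}-r_n=R$, and both inequalities will come from controlling the rate at which $t\mapsto\lg\gamma(t),e\rg$ grows. First I would record a uniform energy bound. On each interval $[\tau_n,\tau_{n+1}]$ the trajectory $\gamma$ is (a concatenation of) the optimal path $\bar\gamma^{x_n}$ for $\chi$ and a $\Z^d$-translate of the optimal path $\tilde\gamma^{z_n-\hat k_n}$ for $\chi_\infty$; by the optimality identity \eqref{takisoptimal} and its $\chi_\infty$-analogue, together with $\zeta\geq0$, each of these pieces satisfies \eqref{condgamma} with the constant $C:=2(\|\chi\|_\infty+\|\chi_\infty\|_\infty)$, and, crucially, this is exactly the constant for which $R_0$ and $T_0$ were chosen in \eqref{bla1}; $\Z^d$-invariance of $L$, $\overline H$ and $\zeta$ shows the translated piece still satisfies \eqref{condgamma} with the same $C$.

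For the lower bound I would use that optimizers of a convex coercive Lagrangian whose value function is uniformly Lipschitz have uniformly bounded speed: $\|\dot{\bar\gamma}^x\|_\infty,\|\dot{\tilde\gamma}^x\|_\infty\leq M$, where $M:=\sup\{|D_pH(p,x)|:\ x\in\R^d,\ |p|\leq\|D\chi\|_\infty+\|D\chi_\infty\|_\infty\}$ is finite by continuity and $\Z^d$-periodicity of $D_pH$ and the uniform Lipschitz bounds $\|D\chi\|_\infty+\|D\chi_\infty\|_\infty\lesssim1$ established earlier. Hence $\|\dot\gamma\|_{L^\infty(\tau_n,\tau_{n+1})}\leq M$, so $R=\lg\gamma(\tau_{n+1})-\gamma(\tau_n),e\rg\leq|e|\,M\,(\tau_{n+1}-\tau_n)$, i.e. $\tau_{n+1}-\tau_n\geq R/(|e|M)$. (If one prefers to avoid the pointwise speed bound, the same estimate follows from the energy bound and the coercivity of $L$ via $|\gamma(\tau_{n+1})-\gamma(\tau_n)|^2\leq(\tau_{n+1}-\tau_n)\int|\dot\gamma|^2$ and $\int|\dot\gamma|^2\lesssim1+(\tau_{n+1}-\tau_n)$, for $R$ large.)

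The upper bound is the delicate point. I would apply Lemma~\ref{lem:avoid} with the above constant $C$ and with the \emph{fixed} accuracy $\ep':=|e|/2$, obtaining a time $T_*=T_*(C,|e|)$, independent of the construction parameter $\ep$, such that any trajectory satisfying \eqref{condgamma} with constant $C$ obeys $\lg\sigma(t)-\sigma(0),e\rg\geq(|e|^2/2)t$, hence also $\inf_{s\geq t}\lg\sigma(s)-\sigma(0),e\rg\geq(|e|^2/2)t$, for all $t\geq T_*$. Applying this to $\bar\gamma^{x_n}$ on $A_{n,0}\cup A_{n,2}$, and, after translating by $\hat k_n$ and invoking periodicity, to $\tilde\gamma^{z_n-\hat k_n}$ on $A_{n,1}$ (where $\lg z_n,e\rg\geq r_n$ by \eqref{bla1}(ii)), one sees in each case that the first $t$ with $\inf_{s\geq t}\lg\cdot(s),e\rg\geq r_{n+1}=r_n+R$ is at most $\max(T_*,2R/|e|^2)$, to which one adds the fixed duration $T_0$ of the $\bar\gamma$-portion in $A_{n,1}$; thus $\tau_{n+1}-\tau_n\leq T_0+T_*+2R/|e|^2\leq(1+2/|e|^2)R$ for $R$ large (a threshold allowed to depend on $\ep$, which is harmless since $R$ is taken large throughout). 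I would then set $C_0:=\max(|e|M,\,1+2/|e|^2)$, which depends only on $H$, $d$ and $\zeta$. The reason $C_0$ can be made independent of $\ep$ — and this is the only genuinely subtle issue — is that one must re-run Lemma~\ref{lem:avoid} with a fixed accuracy $\ep'$ rather than recycle the $R_0,T_0$ already fixed in \eqref{bla1}: iterating \eqref{bla1}(ii) instead would produce a bound of the form $\tau_{n+1}-\tau_n\lesssim(T_0/R_0)R$, in which the ratio $T_0/R_0$ still carries an $\ep$-dependence.

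Finally, from $\tau_{n+1}-\tau_n\geq C_0^{-1}R$ for every $n$ — a pathwise inequality — and the finiteness of $\tau_0$, summation gives $\tau_n\geq\tau_0+nC_0^{-1}R\to+\infty$ as $n\to+\infty$, in fact surely, hence a fortiori almost surely. The only non-routine steps are bookkeeping for the concatenation on $A_{n,1}$ (checking that the $\Z^d$-translate of $\tilde\gamma^{z_n-\hat k_n}$ separately satisfies \eqref{condgamma} so Lemma~\ref{lem:avoid} applies, and that $\lg z_n,e\rg\geq r_n$) and securing the $\ep$-independence of $C_0$ as just explained; the calculus-of-variations facts invoked (bounded speed of minimizers, coercivity of $L$) are standard under the hypotheses of Section~2.
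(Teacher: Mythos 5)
Your proof is correct and follows essentially the same strategy as the paper: the lower bound from the uniform speed bound $\|\dot\gamma\|_\infty\leq M$, and the upper bound by applying Lemma~\ref{lem:avoid} afresh with a fixed, $\ep$-independent constant to produce a new time scale (your $T_*$, the paper's $\bar T_0$) at which the trajectory has advanced a definite amount in the direction $e$. The only cosmetic differences are that the paper quantifies the advance by iterating unit steps (``$\inf_{s\geq t+\bar T_0}\lg\bar\gamma(s)-\bar\gamma(t),e\rg\geq 1$'') so that the $T_0$-contribution on $A_{n,1}$ cancels exactly, whereas you use the asymptotic slope $|e|^2/2$ and absorb the additive $T_0+T_*$ into $C_0R$ using $R\geq 2R_0'$ — both handle the $\ep$-independence you correctly flag as the subtle point.
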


\begin{proof} Since $\lg x_n,e\rg = \lg \gamma(\tau_n), e\rg =  n R$, we deduce that 
$$
\begin{array}{rl}
\ds R = r_{n+1}-r_n \; = & \ds  \lg \gamma(\tau_{n+1})- \gamma (\tau_n), e\rg 
\leq  \ds (\tau_{n+1}- \tau_n) \|\dot \gamma\|_\infty\\[1mm]
 \leq & \ds (\tau_{n+1}- \tau_n) (\|D_pH(\cdot, D \chi)\|_\infty+\|D_pH(\cdot, D \chi_\infty)\|_\infty), 
\end{array}
$$
which proves that $(\tau_{n+1}- \tau_n)\geq C_0^{-1}R$ for some $C_0>0$. 
\smallskip

Recall that, for all $x\in \R^d$ and $t\geq 0$, 
$$
\int_0^t \left( L({\dot {\bar \gamma}}^x,\bar \gamma^x)+ \overline H\right)ds\leq 2 \|\chi\|_\infty. 
$$
Then Lemma \ref{lem:avoid} yields $\bar T_0>0$ such that, for all  $t\geq0$, 
$$
\inf_{s\geq t+\bar T_0} \lg \bar \gamma(s)-\bar \gamma(t), e\rg \geq 1.
$$
Similarly, since, for all $t\geq 0$,
$$
\int_0^t \left( L(\dot {\tilde \gamma}^x, \tilde \gamma^x)+ \overline H\right)ds\leq 2\|\chi_\infty\|_\infty,
$$
it follows that
$$
\inf_{t\geq 0} \inf_{s\geq t+\bar T_0} \lg \tilde \gamma(s)-\tilde \gamma(t), e\rg \geq 1.
$$
Hence, in the event $A_{n,0}\cup A_{n,2}$, where $\gamma(t)= \bar \gamma^{x_n}(t-\tau_n)$ for $t \in [\tau_n, \tau_{n+1}]$, we have 
$$
R= \lg \gamma(\tau_{n+1})-\gamma(\tau_n), e\rg \geq [(\tau_{n+1}-\tau_n)/\bar T_0],
$$
where $[a]$ denotes the integer part of $a$. 

\smallskip
In the event $A_{n,1}$,  $\gamma= \bar \gamma^x(\cdot-\tau_n)$ on $[\tau_n, \tau_n+T_0]$ and 
$\gamma= \tilde \gamma^{z_n}(\cdot-\tau_n-T_0)$ on $[\tau_n+T_0, \tau_{n+1}]$, where $z_n= \bar \gamma(\tau_n+T_0)$. Thus
$$
\lg \gamma(\tau_{n+1})-\gamma(\tau_n+T_0), e\rg \geq [(\tau_{n+1}-\tau_n-T_0)/\bar T_0],
$$
while 
$$
\lg \gamma(\tau_{n}+T_0)-\gamma(\tau_n), e\rg \geq [T_0/\bar T_0].
$$
Since, from the first part of the proof,  $\tau_{n+1}-\tau_n$ is large for large $R$, combining the  inequalities above, we  find that, for   a suitable choice of $C_0$, 
$$
R \geq C_0^{-1}(\tau_{n+1}-\tau_n).
$$
\end{proof}

We  now define the constant $K_R$ that was used in the construction of the random trajectory as 
\be\label{defKR}
K_R:= C_0R \left( \|D_pH(\cdot, D\chi)\|_\infty+ \|D_pH(\cdot, D\chi_\infty)\|_\infty\right),
\ee
and remark, for later use, that, in $[\tau_n,\tau_{n+1}]$, $|\gamma(t)-x_n|\leq K_R$. 
\smallskip

We also emphasize that the construction of $C_0$ in the proof of Lemma \ref{lem:limtaun} is  deterministic. indeed, it does not depend on the definition of the random sets $A_{n,0}$, $A_{n,1}$ and $A_{n,2}$, but only on the possible expressions the trajectory $\gamma$ can take in these events. In particular, the definition of $K_R$ is not circular. 
\begin{lem}\label{lem.PAn1} There exists $C_1>0$, which is independent of $\ep$, $R$, $n$ and $\eta$, such that 
$$
\P\left[ A_{n,1}\right]\leq C_1R\eta.
$$
\end{lem}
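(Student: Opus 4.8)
The strategy is to dominate $A_{n,1}$ by the event that at least one out of a number $\sharp S_n$ of \emph{independent} Bernoulli variables equals $1$, where $\sharp S_n$ is deterministic once we condition on the past, and then to estimate $\sharp S_n\lesssim R$ by a geometric argument. First I would isolate the independence structure. By Lemma~\ref{lem.measurable} the point $x_n$, the time $\tau_n$, and hence the whole trajectory $\bar\gamma^{x_n}$, are ${\mathcal F}^-_{r_n-R_0}$-measurable. On the other hand $A_{n,1}$ only tests the values $X_k$ with $k\in\Z^d\cap E_{r_n+R_0',\,r_{n+1}-R_0}$, so it is a function of $\bar\gamma^{x_n}$ together with ${\mathcal F}_{r_n+R_0',\,r_{n+1}-R_0}$; since $R_0'\ge R_0\ge 0$, the index sets $E_{r_n+R_0',\,r_{n+1}-R_0}$ and $E^-_{r_n-R_0}$ are disjoint, and the $X_k$ being i.i.d., ${\mathcal F}_{r_n+R_0',\,r_{n+1}-R_0}$ is independent of ${\mathcal F}^-_{r_n-R_0}$. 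Conditioning on ${\mathcal F}^-_{r_n-R_0}$ therefore freezes $\bar\gamma^{x_n}$ while leaving the relevant $X_k$'s i.i.d.\ Bernoulli$(\eta)$. Setting
$$
S_n:=\{\,k\in\Z^d\cap E_{r_n+R_0',\,r_{n+1}-R_0}:\ |k-\bar\gamma^{x_n}(t)|\le D\ \text{for some }t\ge 0\,\},
$$
which is ${\mathcal F}^-_{r_n-R_0}$-measurable, one has the inclusion $A_{n,1}\subset\{X_k=1\text{ for some }k\in S_n\}$, so by the union bound $\P[A_{n,1}\mid{\mathcal F}^-_{r_n-R_0}]\le \sharp S_n\,\eta$. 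Taking expectations, it remains to prove $\sharp S_n\le C_1 R$ with $C_1$ depending only on the data.

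\textbf{The geometric core.} Since $\bar\gamma^{x_n}$ is optimal for $\chi$, we have $\int_0^T(L(\dot{\bar\gamma}^{x_n},\bar\gamma^{x_n})+\overline H)\,ds=\chi(x_n)-\chi(\bar\gamma^{x_n}(T))\le 2\|\chi\|_\infty=:C$ for all $T\ge 0$, a bound independent of $\ep,R,n,\eta$. Applying Corollary~\ref{cor:takis} to $\bar\gamma^{x_n}$ with this $C$ and, say, $\theta=1$, produces $\ep$-independent constants $T_\ast,R_\ast$ such that $\langle\bar\gamma^{x_n}(t),e\rangle\ge r_n+\lfloor t/T_\ast\rfloor-R_\ast$ for all $t\ge0$. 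Consequently, if $\bar\gamma^{x_n}(t)$ lies within distance $D$ of some $k\in E_{r_n+R_0',\,r_{n+1}-R_0}$, then $r_n+\lfloor t/T_\ast\rfloor-R_\ast\le\langle\bar\gamma^{x_n}(t),e\rangle< r_{n+1}-R_0+|e|D$, which forces $t\le T_\ast(R+R_\ast+|e|D+1)\le C'R$ (using $R\ge 2R_0'\ge 1$, which may be assumed by enlarging $R_0$ once and for all). Thus only the arc $\bar\gamma^{x_n}([0,C'R])$ is relevant; since $\|\dot{\bar\gamma}^{x_n}\|_\infty\le\|D_pH(D\chi,\cdot)\|_\infty$, this arc has length $\lesssim R$, and the number of integer points within distance $D$ of a curve of length $L$ is $\lesssim L+1$ (cover the curve by $\lceil L/D\rceil+1$ balls of radius $2D$, each containing at most $(4D+1)^d$ lattice points, and note their union contains the $D$-neighborhood of the curve). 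Hence $\sharp S_n\le C_1 R$, which finishes the argument.

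\textbf{Main difficulty.} The argument is largely routine—a union bound plus a lattice-point count—but the point that needs care is uniformity of $C_1$ in $\ep$: one must \emph{not} invoke the $\ep$-dependent constants $R_0,T_0,R_0'$ that enter the construction of the random trajectory when estimating the progress of $\bar\gamma^{x_n}$ along $e$, but instead re-apply Corollary~\ref{cor:takis} (equivalently Lemma~\ref{lem:avoid}) directly to $\bar\gamma^{x_n}$ using only the $\ep$-free energy bound $2\|\chi\|_\infty$. Once this is done, all constants produced ($T_\ast$, $R_\ast$, $\|D_pH(D\chi,\cdot)\|_\infty$, $D$, $d$) depend only on $H$ and $\zeta$, and the bound $\P[A_{n,1}]\le C_1R\eta$ follows with the required uniformity in $\ep,R,n,\eta$.
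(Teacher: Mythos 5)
Your proof is correct and follows essentially the same strategy as the paper's: condition on $\mathcal F^-_{r_n-R_0}$ to freeze $\bar\gamma^{x_n}$ while keeping the relevant $X_k$'s i.i.d., apply a union bound over the random set of lattice points within distance $D$ of the trajectory, and show this set has cardinality $\lesssim R$ by tracking the progress of $\bar\gamma^{x_n}$ in the $e$-direction via Lemma~\ref{lem:avoid}/Corollary~\ref{cor:takis} with the $\varepsilon$-free energy bound $2\|\chi\|_\infty$. The only cosmetic difference is that the paper counts the lattice points by discretizing the trajectory at time steps $l\bar T_0$ directly, while you first bound the relevant time window by $C'R$ and then count lattice points near an arc of length $\lesssim R$; the attention you draw to the uniformity of the constant in $\varepsilon$ matches the paper's choice of $\bar T_0$ independent of $\varepsilon$.
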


The intuition behind the lemma is quite clear. The set  $A_{n,1}$ is contained in the event that there is at least one bump $\zeta(\cdot-k)$, which is both  near the trajectory $\gamma$ and belongs to $E_{ r_n,r_{n+1}}$. Since $|k|\lesssim R$,   $\P[A_{1,n}]\lesssim R\eta.$ 

\begin{proof}[The proof of  Lemma~\ref{lem.PAn1}] Let $S$ be the random set
$$
S:= \left\{ k\in \Z^d\cap E_{r_n,r_{n+1}}: \inf_{t\geq 0} |\bar \gamma^{x_n}(t)-k|\leq D\right\}.
$$
Lemma \ref{lem:avoid} implies the existence of a constant $\bar T_0$, independent of $\ep$, such that 
$$
\inf_{s\geq t+ \bar T_0} \lg \bar \gamma^x(s)-\bar \gamma^x(t),e\rg \geq 1.
$$

Next we discretize $\bar \gamma^x$ with step size $t_l = l \bar T_0$, for some  $l\in \N$, such that 
$$
|\bar \gamma^{x_n}(t)-\bar \gamma^{x_n}(l\bar T_0)|\leq M\bar T_0,
$$
where $M:= \|D_pH(D\chi, \cdot)\|_\infty$.
\smallskip

It follows that 
$$
S\subset \left\{ k\in \Z^d\cap E_{r_n,r_{n+1}}: \inf_{l\in \N} |\bar \gamma^{x_n}(l\bar T_0)-k|\leq D+M\bar T_0\right\}.
$$
Note also that since, for any $l\geq L_R:=[R+D+M\bar T_0]+2$, we have 
$$
\lg \bar \gamma^{x_n}(l \bar T_0), e\rg > \lg x_n, e\rg +R+D+ M\bar T_0 +1\geq r_{n+1}+D + M\bar T_0+1,
$$
if $k\in \Z^d$ is such that   $ |\bar \gamma^{x_n}(l\bar T_0)-k|\leq D+M\bar T_0$,  then $k\in E^+_{r_n+1}$. 
\smallskip

Hence
$$
S\subset \left\{ k\in \Z^d\cap E_{r_n,r_{n+1}}: \inf_{l= 0, \dots, L_R} |\bar \gamma^{x_n}(t_l)-k|\leq D+ M\bar T_0 \right\}.
$$
Since
$$
A_{n,1}\subset \left\{ \exists k\in S, \; X_k=1\right\}, 
$$
we deduce that 
$$
\begin{array}{rl}
\ds \P\left[A_{n,1}\right]\; \leq & \ds \P\left[ \exists k \in \Z^d\cap E_{r_n,r_{n+1}}, \; \exists l= \{0, \dots, L_R\}, \;  |\bar \gamma^{x_n}(t_l)-k|\leq D+ M\bar T_0, \; X_k=1  \right]\\
\leq & \ds \sum_{l=0}^{L_R}  \P\left[\exists k\in  \Z^d\cap E_{r_n,r_{n+1}}, \;    |\bar \gamma^{x_n}(t_l)-k|\leq D+ M\bar T_0, \; X_k=1  \right].
\end{array}
$$
The  ${\mathcal F}^-_{r_n-R_0}$-measurability of $x_n$ implies that  the event $\{X_k=1\}$ with  $k\in \Z^d\cap E_{r_n,r_{n+1}}$ is independent of $x_n$, and, thus 
$$
\P\left[A_{n,1}\right]\leq L_R C_d (D+ M\bar T_0)^d \eta,
$$
where $C_d$ depends only on the dimension. 
\smallskip

Then, since $L_R=[R+D+M\bar T_0]+2$, we may conclude.
\end{proof}

\subsection*{Proof of Theorem \ref{thm:main2}}\label{subsec:proof}

It is known (see \cite{S}) that, if 
$$
\theta(t):= \inf_{\xi(0)=0}  \int_0^t \left(L(\dot \xi(s),\xi(s)) + \overline H+ \zeta_\eta(\xi(s))\right)ds, 
$$
then, almost surely, 
$$
\lim_{t\to +\infty} \frac{\theta(t)}{t}= \overline H-\overline H_\eta. 
$$
In particular, in view of Lemma \ref{lem:limtaun}, we have 
$$
\lim_{n\to +\infty} \E\left[\frac{\theta(\tau_n)}{\tau_n}\right]= \overline H -\overline H_\eta .
$$
If $\gamma$ is the trajectory built in a previous subsection, 
then 
\be\label{barHetabound}
\overline H-\overline H_\eta\leq \limsup_{n\to+\infty} \E\left[\frac{1}{\tau_n}\int_0^{\tau_n} \left(L(\dot\gamma,\gamma) +  \overline H+\zeta_\eta(\gamma)\right)ds\right].
\ee
To estimate the right-hand side the inequality above, which  is the core of the proof, we need to establish three more auxiliary results, which we formulate next as separate lemmata.
\smallskip

We set 
$$
I_n:= \int_{\tau_n}^{\tau_{n+1}} \left(L(\dot\gamma,\gamma) + \overline H+\zeta_\eta(\gamma)\right)ds,
$$
and we successively estimate $I_n$ in $A_{n,0}$, $A_{n,2}$ and $A_{n,1}$ noting that the estimate in the last set is the hardest to establish.
\begin{lem}\label{lem.An0}
There exists a nonnegative random variable $R_{n,0}$ such that 
$$
\ds {\bf 1}_{A_{n,0}} I_n \; = \; \ds {\bf 1}_{A_{n,0}}  (\chi(x_n)- \chi(x_{n+1}))+ R_{n,0} \ \ {\rm with } \ \  \E\left[ R_{n,0} \right]\leq C\eta.
$$
\end{lem}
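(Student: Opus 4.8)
On the event $A_{n,0}$ the trajectory $\gamma$ restricted to $[\tau_n,\tau_{n+1}]$ coincides with the optimal trajectory $\bar\gamma^{x_n}(\cdot-\tau_n)$ for $\chi$ issued from $x_n$, so by the dynamic programming identity \eqref{takisoptimal} one has $\int_{\tau_n}^{\tau_{n+1}}\big(L(\dot\gamma,\gamma)+\overline H\big)\,ds=\chi(x_n)-\chi(x_{n+1})$ there. Hence I would simply set $R_{n,0}:=\mathbf{1}_{A_{n,0}}\int_{\tau_n}^{\tau_{n+1}}\zeta_\eta(\gamma(s))\,ds$, which is nonnegative because $\zeta\ge0$; the displayed identity then holds by construction, and the whole content of the lemma reduces to proving $\mathbb E[R_{n,0}]\le C\eta$.

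\textbf{Localising the bumps.} First I would note that, by \eqref{takisoptimal} again, $\int_0^t\big(L(\dot{\bar\gamma}^{x_n},\bar\gamma^{x_n})+\overline H\big)\,ds=\chi(x_n)-\chi(\bar\gamma^{x_n}(t))\le 2\|\chi\|_\infty$, so the rotation-number estimates \eqref{bla1} apply to $\bar\gamma^{x_n}$. Iterating \eqref{bla1}(ii) forward from $t=0$, then invoking \eqref{bla1}(i) once the level has climbed past $r_n+R_0'+D|e|+R_0$, and recalling $\langle x_n,e\rangle=r_n$, one produces a constant $T_1$ (depending on the data and $\ep$ but not on $R,n,\eta$) with $\{s\in[\tau_n,\tau_{n+1}]:\langle\gamma(s),e\rangle<r_n+R_0'+D|e|\}\subset[\tau_n,\tau_n+T_1]$; a symmetric argument based on the definition of $\tau_{n+1}$ gives $\langle\gamma(s),e\rangle\le r_{n+1}+R_0$ throughout and $\{s\in[\tau_n,\tau_{n+1}]:\langle\gamma(s),e\rangle>r_{n+1}-R_0-D|e|\}\subset[\tau_{n+1}-T_1,\tau_{n+1}]$. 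Combining this with Lemma~\ref{lem.bargammaErn} (so $\langle\gamma(s),e\rangle\ge r_n$ on the whole interval) and with the defining property of $A_{n,0}$ (no $k$ with $X_k=1$ in the slab $E_{r_n+R_0',r_{n+1}-R_0}$ ever lies within distance $D$ of $\bar\gamma^{x_n}$), I would conclude that, on $A_{n,0}$, every nonzero term $\zeta(\gamma(s)-k)X_k$ forces $X_k=1$, $|k-\gamma(s)|<D$, and either (A) $r_n-R_0\le\langle k,e\rangle<r_n+R_0'$, $s-\tau_n\le T_1$, $|k-x_n|\le MT_1+D$; or (B) $r_{n+1}-R_0\le\langle k,e\rangle\le r_{n+1}+R_0+D|e|$, $\tau_{n+1}-s\le T_1$, $|k-x_{n+1}|\le MT_1+D$, where $M:=\|D_pH(D\chi,\cdot)\|_\infty$. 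This yields $R_{n,0}\le C(N^-_n+N^+_n)$ on $A_{n,0}$ with $C:=2T_1\|\zeta\|_\infty$, where $N^-_n$ (resp. $N^+_n$) counts the $k\in\Z^d$ with $X_k=1$ of type (A) lying in $\overline B(x_n,MT_1+D)$ (resp. of type (B) lying in $\overline B(x_{n+1},MT_1+D)$).

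\textbf{The hard part: the probabilistic bound.} The main obstacle will be to show $\mathbb E[\mathbf1_{A_{n,0}}N^\pm_n]\le C\eta$. For $N^+_n$ this is fairly direct: by Lemma~\ref{lem.measurable}, on $A_{n,0}$ both $x_{n+1}$ and $A_{n,0}$ are $\mathcal F^-_{r_{n+1}-R_0}$-measurable, whereas every $k$ counted in $N^+_n$ has $\langle k,e\rangle\ge r_{n+1}-R_0$, so the family $(X_k)$ over those $k$ is independent of $(x_{n+1},\mathbf1_{A_{n,0}})$; summing $\mathbb P[X_k=1]=\eta$ over the $O\big((MT_1+R_0)^d\big)$ admissible lattice positions gives the bound. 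For $N^-_n$ the difficulty is that $x_n$ is $\mathcal F^-_{r_n-R_0}$-measurable and a priori bumps met near $x_n$ could belong to that same $\sigma$-algebra, destroying independence. This is exactly where Lemma~\ref{lem.bargammaErn} is essential: since $\bar\gamma^{x_n}$ never leaves $E^+_{r_n}$, any bump it meets has $\langle k,e\rangle>r_n-D|e|\ge r_n-R_0$ (using $R_0\ge D|e|$, which we may assume by enlarging $R_0$), so the $k$ counted in $N^-_n$ all satisfy $r_n-R_0\le\langle k,e\rangle<r_n+R_0'$ and hence their $(X_k)$ form a family independent of \emph{both} $x_n$ (which depends only on $(X_{k'})_{\langle k',e\rangle<r_n-R_0}$) and $A_{n,0}$ (which depends only on $(X_{k'})_{\langle k',e\rangle\ge r_n+R_0'}$, as $R_0'>R_0$); the same count then gives $\mathbb E[\mathbf1_{A_{n,0}}N^-_n]\le C\eta$. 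Adding the two estimates and using $R_{n,0}\le C(N^-_n+N^+_n)$ on $A_{n,0}$ (with $R_{n,0}=0$ off it) finishes the proof. The localisation step, though a little tedious, is only careful bookkeeping with \eqref{bla1} and Lemma~\ref{lem.bargammaErn}.
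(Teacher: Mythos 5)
Your proposal is correct and takes essentially the same route as the paper: the same choice of $R_{n,0}=\mathbf 1_{A_{n,0}}\int\zeta_\eta(\gamma)$, the same localisation of the perturbation via \eqref{bla1} and Lemma~\ref{lem.bargammaErn} to two $O(T_0)$ time windows near $\tau_n$ and $\tau_{n+1}$, and the same use of the $\mathcal F^-_{r_n-R_0}$-measurability of $x_n,\tau_n$ together with the independence of the bumps in $E^+_{r_n}$ to extract a factor $\eta$. The only cosmetic difference is that the paper replaces $\tau_{n+1}$ by the $\mathcal F^-_{r_n-R_0}$-measurable $\bar\tau_{n+1}$, drops $\mathbf 1_{A_{n,0}}$ by nonnegativity, and conditions $\zeta_\eta(\bar\gamma^{x_n}(t))$ on $\mathcal F^-_{r_n-R_0}$ to obtain $\eta\,\zeta_\infty(\bar\gamma^{x_n}(t))$ directly, rather than bounding by a count of nearby bumps $N^\pm_n$ and estimating their expectation.
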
 

\begin{proof} Recall that $\gamma= \bar \gamma^{x_n}(\cdot-\tau_n)$ in $A_{n,0}$.
It follows that 
$$
\begin{array}{rl}
\ds {\bf 1}_{A_{n,0}} I_n\; = & \ds {\bf 1}_{A_{n,0}} \left(\int_{0}^{\tau_{n+1}-\tau_n} \left(L(\dot{\bar \gamma}^{x_n}(s),\bar \gamma^{x_n}(s)) +  \overline H+\zeta_\eta(\bar \gamma^{x_n}(s)) \right)ds\right)\\
= & \ds  {\bf 1}_{A_{n,0}} \left( \chi(x_n)- \chi(x_{n+1}) + \int_{0}^{\tau_{n+1}-\tau_n}\zeta_\eta(\bar \gamma^{x_n}(s)) ds\right) .
\end{array}
$$
Let 
$$
R_{n,0}:=  {\bf 1}_{A_{n,0}} \int_{0}^{\tau_{n+1}-\tau_n}\zeta_\eta(\bar \gamma^{x_n}(s)) ds.
$$
Since $A_{n,0}$ is the event that there is no $k\in \Z^d\cap E_{r_n+R_0', r_{n+1}-R_0}$ with $X_k=1$ and $|k-\bar \gamma^{x_n}(t)|\leq D$ for some $t\geq0$, 
$$
\zeta_\eta(\gamma(t)) = 0 \; {\rm if }\; \gamma(t)= \bar \gamma^{x_n}(t-\tau_n)\in E_{r_n+R_0'+D, r_{n+1}-R_0-D}\; {\rm and}\; t\in [\tau_n,\tau_{n+1}]. 
$$
Let 
$$
\bar \tau_{n+1}:= \tau_n+\inf \{t\geq 0,\; \inf_{s\geq t} \lg \bar \gamma^{x_n}(s),e\rg\geq r_{n+1}\},
$$
and recall that, in view of Lemma \ref{lem.measurable},  $x_n$ and $\tau_n$ are ${\mathcal F}^-_{r_n-R_0}$ measurable. 
Then $\bar \tau_{n+1}$ is also  ${\mathcal F}^-_{r_n-R_0}$-measurable  and  equals  $\tau_{n+1}$  in $A_{n,0}$. 
\smallskip

We now estimate for how long the trajectory $\bar \gamma^{x_n}$ remains in $E_{r_n+R_0'+D, r_{n+1}-R_0-D}$. Using   \eqref{bla1}(ii) we find that, if  $C:= (1+[(R_0'+D)/R_0])$, then, for all  $t\geq CT_0$,
$$
\lg \bar \gamma^{x_n}(t)-x_n, e\rg \geq  R_0'+D,
$$
and, similarly, if $t\leq \bar \tau_{n+1}-CT_0$, then 
$$
\lg x_{n+1}-\bar \gamma^{x_n}(t), e\rg \geq  R_0+D. 
$$
 It follows that  $\bar \gamma^{x_n}(t)\in E_{r_n+R_0'+D, r_{n}-R_0-D}$ for $t\in [CT_0,\bar \tau_{n+1}-CT_0]$, and,  hence,
$$
\E\left[ {\bf 1}_{A_{n,0}}  \int_{\tau_n}^{\tau_{n+1}} \zeta_\eta (\gamma(t))dt\right] \leq
\E\left[ \int_{0}^{CT_0} \zeta_\eta(\bar \gamma^{x_n}(t))dt +  \int_{\bar \tau_{n+1}-\tau_n-CT_0}^{\bar \tau_{n+1}-\tau_n} \zeta_\eta(\bar \gamma^{x_n}(t))dt\right] .
$$
Using that  $\bar \gamma^{x_n}$ remains in $E^+_{r_n}$ for positive times,  $x_n$ is  ${\mathcal F}^-_{r_n-R_0}$-measurable and the map $\zeta_\eta$ is independent of ${\mathcal F}^-_{r_n-R_0}$  in $E^+_{r_n}$ with $\E[\zeta_\eta(z)]= \eta \zeta_\infty(z)$, we find
$$
\begin{array}{rl}
\ds \E\left[ \int_{0}^{CT_0} \zeta_\eta(\bar \gamma^{x_n}(t))dt  \right] \;
= & \ds 
\E\left[ \int_{0}^{CT_0} \E\left[  \zeta_\eta(\bar \gamma^{x}(t))\ |\ {\mathcal F}^-_{r_n-R_0}\right]_{x=x_n}dt \right] \\
= & \ds 
\E\left[ \eta \int_{0}^{CT_0} \zeta_\infty(\bar \gamma^{x_n}(t))dt\right] 
\leq  \ds C\eta\|\zeta_\infty\|_\infty T_0.
\end{array}
$$
In the same way, since $x_n$, $\tau_n$ and $\bar \tau_{n+1}$ are  ${\mathcal F}^-_{r_n-R_0}$ measurable, 
$$
\begin{array}{rl}
\ds \E\left[ \int_{\bar \tau_{n+1}-\tau_n-CT_0}^{\bar \tau_{n+1}-\tau_n} \zeta_\eta(\bar \gamma^{x_n}(t))dt  \right] 
\leq  \ds C\eta\|\zeta _\infty\|_\infty T_0.
\end{array}
$$
The claim follows.
\end{proof}

\medskip

Next we estimate $I_n$ in $A_{n,2}$. 
\begin{lem}\label{lem.An2} There exists a nonnegative random variable $R_{n,2}$ such that 
$$
\ds {\bf 1}_{A_{n,2}} I_n \; = \; \ds
 {\bf 1}_{A_{n,2}}  \left(\chi(x_n)- \chi(x_{n+1})\right)+ R_{n,2} \ \ {\rm and} \ \  \E\left[R_{n,2}\right]\leq C_R\eta^2.
$$
\end{lem}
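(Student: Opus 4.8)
The plan is to follow the proof of Lemma~\ref{lem.An0} almost verbatim for the equality, the only genuinely new input being a bound $\P[A_{n,2}]\le C_R\eta^2$. First I would record that in the event $A_{n,2}$ the trajectory is $\gamma=\bar\gamma^{x_n}(\cdot-\tau_n)$ on $[\tau_n,\tau_{n+1}]$ with $x_{n+1}=\bar\gamma^{x_n}(\tau_{n+1}-\tau_n)$ --- exactly the same construction as on $A_{n,0}\cup A_{n,2}$ --- so that the optimality of $\bar\gamma^{x_n}$ for $\chi$ gives
$$
{\bf 1}_{A_{n,2}}I_n={\bf 1}_{A_{n,2}}\Big(\chi(x_n)-\chi(x_{n+1})+\int_0^{\tau_{n+1}-\tau_n}\zeta_\eta(\bar\gamma^{x_n}(s))\,ds\Big).
$$
Hence one takes $R_{n,2}:={\bf 1}_{A_{n,2}}\int_0^{\tau_{n+1}-\tau_n}\zeta_\eta(\bar\gamma^{x_n}(s))\,ds\ge 0$, and the whole matter reduces to proving $\E[R_{n,2}]\le C_R\eta^2$.

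For that, I would first use the crude bound $\zeta_\eta\le\zeta_\infty\le\|\zeta_\infty\|_\infty$ (valid since $\zeta$ has compact support) together with $\tau_{n+1}-\tau_n\le C_0R$ from Lemma~\ref{lem:limtaun} to get $R_{n,2}\le C_0R\|\zeta_\infty\|_\infty{\bf 1}_{A_{n,2}}$, hence $\E[R_{n,2}]\le C_0R\|\zeta_\infty\|_\infty\,\P[A_{n,2}]$. It remains to estimate $\P[A_{n,2}]$. The point I would make is that $A_{n,2}=(A_{n,0}\cup A_{n,1})^c$ forces at least two of the Bernoulli variables indexed by the set
$$
S_n':=\Z^d\cap E_{r_n+R_0',\,r_{n+1}-R_0}\cap B(x_n,K_R+D)
$$
to equal $1$: if $A_{n,0}$ fails there is some $k_1\in\Z^d\cap E_{r_n+R_0',\,r_{n+1}-R_0}$ with $X_{k_1}=1$ lying within $D$ of $\bar\gamma^{x_n}$, and, since $|\gamma(t)-x_n|\le K_R$ on $[\tau_n,\tau_{n+1}]$ (the remark following \eqref{defKR}), $k_1\in S_n'$; if in addition $A_{n,1}$ fails, then either a second such index exists, or there is a further $k'\in\Z^d\cap E_{r_n+R_0',\,r_{n+1}-R_0}\cap B(x_n,K_R)\subset S_n'$ with $X_{k'}=1$, so in all cases $\sum_{k\in S_n'}X_k\ge 2$.

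The proof would then be finished by conditioning. By Lemma~\ref{lem.measurable} the point $x_n$, hence the set $S_n'$, is ${\mathcal F}^-_{r_n-R_0}$-measurable; since $S_n'\subset E^+_{r_n+R_0'}$ and $r_n+R_0'>r_n-R_0$, the variables $(X_k)_{k\in S_n'}$ are i.i.d.\ Bernoulli$(\eta)$ and independent of ${\mathcal F}^-_{r_n-R_0}$, and $\sharp S_n'\le C_d(K_R+D)^d=:N_R$. Therefore
$$
\P\big[A_{n,2}\mid{\mathcal F}^-_{r_n-R_0}\big]\le\P\Big[\textstyle\sum_{k\in S_n'}X_k\ge 2\ \Big|\ {\mathcal F}^-_{r_n-R_0}\Big]\le\binom{N_R}{2}\eta^2,
$$
and taking expectations gives $\P[A_{n,2}]\le\binom{N_R}{2}\eta^2$; combined with the bound above this yields $\E[R_{n,2}]\le C_R\eta^2$ after renaming the constant. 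The step I expect to be the main obstacle is the middle one --- the careful unwinding of the definitions of $A_{n,0}$, $A_{n,1}$ and of the set $S$ occurring there, together with the verification that every index which can be responsible for being in $A_{n,2}$ lies in the $\eta$-independent, $R$-controlled box $B(x_n,K_R+D)$; once that is done, the measurability and counting arguments are routine.
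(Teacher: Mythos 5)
Your proposal follows the paper's proof almost verbatim: the same choice of $R_{n,2}$, the same use of Lemma~\ref{lem:limtaun} to bound $\tau_{n+1}-\tau_n$ by $C_0R$, and the same observation that $A_{n,2}$ forces at least two Bernoulli variables indexed by a slab $\cap$ ball of $R$-controlled size to equal $1$, yielding $\P[A_{n,2}]\lesssim_R\eta^2$. The only difference is that you unwind the definitions of $A_{n,0}$ and $A_{n,1}$ and spell out the conditioning on ${\mathcal F}^-_{r_n-R_0}$ in more detail than the paper, which states these steps with very little justification; the argument is correct and the extra care is warranted.
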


\begin{proof} It is immediate that 
$$
\begin{array}{rl}
\ds {\bf 1}_{A_{n,2}} I_n\; = \;  \ds  {\bf 1}_{A_{n,0}} \left( \chi(\gamma(x_n)- \chi(x_{n+1})\right) + R_{n,2},
\end{array}
$$
with
$$
R_{n,2}:= {\bf 1}_{A_{n,2}} \int_{0}^{\tau_{n+1}-\tau_n}\zeta_\eta(\bar \gamma^{x_n}(s)) ds .
$$
 In the event $A_{n,2}$, there are at least two different $k,k'\in \Z^d\cap E_{r_n + R_0',  r_{n+1}-R_0}\cap B(x_n, K_R)$ with $X_k=X_{k'}=1$. Since $x_n$ is independent of ${\mathcal F}_{r_n}$,  
$$
\P\left[ A_{n,2}\right] \leq C_R \eta^2.
$$
It follows from Lemma \ref{lem:limtaun} that $\tau_{n+1}-\tau_n\leq C_0R$, and, hence, 
$$
\begin{array}{rl}
\ds \E\left[ R_{n,2}\right]  \; = 
& \ds \E\left[ {\bf 1}_{A_{n,2}}  \int_{0}^{\tau_{n+1}-\tau_n} \zeta_\eta (\bar \gamma^{x_n}(t))dt\right]\\
 \leq & \ds \E\left[ {\bf 1}_{A_{n,2}}   \int_{0}^{\tau_{n+1}-\tau_n} \zeta_\infty (\bar \gamma^{x_n}(t))dt \right] \leq  C_0 R \|\zeta_\infty\|_\infty\P\left[A_{n,2}  \right],
\end{array}
$$
and the claim follows.
\end{proof}


The next lemma is about  $A_{n,1}$.
\begin{lem}\label{lem.An1} Let  $C_1$ be as in Lemma \ref{lem.PAn1}. There exists a nonnegative random variable $R_{n,1}$ such that  
$$
{\bf 1}_{A_{n,1}} I_n \leq  {\bf 1}_{A_{n,1}} \left( \chi(x_n)-\chi(x_{n+1})\right)+R_{n,1} \ \ \text{and} \ \  \E\left[R_{n,1}\right] \leq (C+C_1R\ep)\eta+C_R \eta^2.
$$
\end{lem}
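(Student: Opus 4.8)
The plan is to exploit the precise structure of the event $A_{n,1}$: on this event the trajectory $\gamma$ follows $\bar\gamma^{x_n}$ on $[\tau_n,\tau_n+T_0]$ and then $\tilde\gamma^{z_n-\hat k_n}+\hat k_n$ on $[\tau_n+T_0,\tau_{n+1}]$, where $\hat k_n$ is the single relevant bump location. First I would split $I_n$ along this decomposition and use optimality of each piece. On $[0,T_0]$ the curve $\bar\gamma^{x_n}$ is optimal for $\chi$, so
$$
\int_0^{T_0}\bigl(L(\dot{\bar\gamma}^{x_n},\bar\gamma^{x_n})+\overline H\bigr)ds = \chi(x_n)-\chi(z_n),
$$
contributing an extra $\int_0^{T_0}\zeta_\eta(\bar\gamma^{x_n}(s))\,ds$ from the perturbation term. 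On $[\tau_n+T_0,\tau_{n+1}]$, translating by $-\hat k_n$ and using that $\zeta(\cdot)=\zeta_\infty(\cdot)$ near a single bump together with the defining property of $A_{n,1}$ (no other bump within $B(x_n,K_R)\cap E_{r_n+R_0',r_{n+1}-R_0}$), I would identify $\zeta_\eta(\gamma(s)) = \zeta(\gamma(s)-\hat k_n)$ for the relevant times, so that the running cost along this segment equals, by optimality of $\tilde\gamma^{z_n-\hat k_n}$ for $\chi_\infty$,
$$
\chi_\infty(z_n-\hat k_n)-\chi_\infty(x_{n+1}-\hat k_n) = \chi_\infty(z_n-\hat k_n) - \chi(x_{n+1}-\hat k_n) - c,
$$
where the last equality uses \eqref{toto1} and \eqref{tildegammaegeqR0} (i.e.\ $x_{n+1}-\hat k_n\in E_{R_0}^+$). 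Hence, by periodicity of $\chi$, the deterministic part telescopes to $\chi(x_n)-\chi(x_{n+1})$ plus the controlled discrepancy $(\chi_\infty-\chi)(z_n-\hat k_n)-c$.

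Next I would bound the error terms. The term $(\chi_\infty-\chi)(z_n-\hat k_n)-c$ is handled using Corollary~\ref{cor:cor}: since $\hat k_n\in E_{r_n+R_0',r_{n+1}-R_0}$ and $z_n=\bar\gamma^{x_n}(T_0)$ with $\lg z_n,e\rg\ge r_n+R_0$ by \eqref{bla1}(ii), one gets (from \eqref{defR0prime}) that $\lg z_n-\hat k_n,e\rg \leq -R_0$, so by \eqref{toto2} this term is $\le \ep$; it is also $\ge 0$ by Corollary~\ref{cor:cor}(ii). The leftover nonnegative running-perturbation term $\int_0^{T_0}\zeta_\eta(\bar\gamma^{x_n}(s))\,ds$ plus $\int\zeta_\eta(\gamma)$ on the final stretch near $E_{R_0}^+$ is estimated exactly as in Lemma~\ref{lem.An0}, using $\mathcal F^-_{r_n-R_0}$-measurability of $x_n$, $z_n$, $\hat k_n$ and independence of $\zeta_\eta$ on $E_{r_n}^+$ with $\E[\zeta_\eta(z)]=\eta\zeta_\infty(z)$, yielding an $\E[\cdot]\lesssim C\eta$ contribution. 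Collecting, I would set
$$
R_{n,1} := {\bf 1}_{A_{n,1}}\Bigl((\chi_\infty-\chi)(z_n-\hat k_n)-c + \int_0^{T_0}\zeta_\eta(\bar\gamma^{x_n}(s))ds + (\text{remaining }\zeta_\eta\text{ terms})\Bigr),
$$
which is nonnegative, and $\E[R_{n,1}]\le \ep\,\P[A_{n,1}] + C\eta$. Finally, by Lemma~\ref{lem.PAn1}, $\P[A_{n,1}]\le C_1R\eta$, so $\ep\,\P[A_{n,1}]\le C_1R\ep\eta$, and any doubly-occupied configuration inside $A_{n,1}$ near the trajectory (which strictly speaking $A_{n,1}$ excludes, but a residual $\eta^2$ appears from bumps just outside the window controlled via $R_0'$) contributes $C_R\eta^2$; this gives $\E[R_{n,1}]\le (C+C_1R\ep)\eta + C_R\eta^2$, as claimed.

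The main obstacle I expect is the careful bookkeeping on the final segment $[\tau_n+T_0,\tau_{n+1}]$: one must verify that along $\tilde\gamma^{z_n-\hat k_n}$ the only bump seen is the one at the origin (after translation), which requires combining the no-other-bump clause in the definition of $A_{n,1}$ (valid on $B(x_n,K_R)$, hence for $|\gamma(t)-x_n|\le K_R$, which holds by the remark after \eqref{defKR}) with the fact — from Lemma~\ref{lem.bargammaErn} and \eqref{tildegammaegeqR0} — that $\tilde\gamma^{z_n-\hat k_n}(t)+\hat k_n$ eventually enters $E_{r_{n+1}-R_0}^+$ and there coincides with an optimal $\bar\gamma$-trajectory for $\chi$, so the perturbation term genuinely vanishes past that point. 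Threading the geometry of the regions $E_{r_n}^+$, $E_{r_n+R_0',r_{n+1}-R_0}$ together with the measurability structure from Lemma~\ref{lem.measurable} so that the conditional-expectation step applies cleanly is where the real care is needed; the convexity/optimality manipulations themselves are routine given the weak KAM facts already recorded.
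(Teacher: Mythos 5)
Your plan is correct and follows essentially the same route as the paper's proof: split $I_n$ at time $\tau_n+T_0$, use optimality of $\bar\gamma^{x_n}$ for $\chi$ on the first piece and of $\tilde\gamma^{z_n-\hat k_n}$ for $\chi_\infty$ on the second, replace $\zeta_\eta$ by $\zeta(\cdot-\hat k_n)$ via the single-bump clause of $A_{n,1}$, and trade $\chi_\infty$ for $\chi+c$ at both endpoints via \eqref{toto1}--\eqref{toto2}, putting the residual $\zeta_\eta$ integrals (near the boundary of the safe window) into $R_{n,1}$ controlled by the $\eta^2$-probability event of a second nearby bump. One small caveat: in deriving $\lg z_n-\hat k_n,e\rg\leq -R_0$ you invoke \eqref{bla1}(ii), which gives a \emph{lower} bound on $\lg z_n,e\rg$ and is irrelevant here; what you actually need (and what the paper uses) is the Lipschitz bound $|z_n-x_n|\leq \|D_pH(D\chi,\cdot)\|_\infty T_0$ together with $\lg\hat k_n,e\rg\geq r_n+R_0'$ and the definition \eqref{defR0prime} of $R_0'$. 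With that correction the plan matches the paper.
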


\begin{proof} In the event $A_{n,1}$,  we have set  $\gamma= \bar \gamma^{x_n}(\cdot-\tau_n)$ on $[\tau_n, \tau_n+T_0]$ and $\gamma= \tilde \gamma^{z_n-\hat k_n}(\cdot-\tau_n-T_0)+ \hat k_n$ on $[\tau_n+T_0,\tau_{n+1}]$, where $z_n:=  \bar \gamma^{x_n}(T_0)$. 
So we can write 
$$
{\bf 1}_{A_{n,1}} I_n= A+B
$$
with
$$
A:= {\bf 1}_{A_{n,1}} \int_{0}^{T_0}  \left(L(\dot{\bar \gamma}^{x_n}(s),\bar \gamma^{x_n}(s)) +  \overline H+\zeta_\eta(\bar \gamma^{x_n}(s)) \right)ds,  
$$
and 
$$
B:= {\bf 1}_{A_{n,1}} \int_{0}^{\tau_{n+1}-\tau_n-T_0} \left(L(\dot{\tilde \gamma}^{z_n-\hat k_n}(s),\tilde \gamma^{z_n-\hat k_n}(s)+\hat k_n) +  \overline H+\zeta_\eta(\tilde \gamma^{z_n-\hat k_n}(s)+\hat k_n) \right)ds.  
$$
To estimate $A$ we argue  as in Lemma \ref{lem.An0}. Indeed, 
$$
A= {\bf 1}_{A_{n,1}} \left( \chi(x_n)-\chi(z_n)\right)+ R_{n,1}'
$$
with 
$$
R_{n,1}':={\bf 1}_{A_{n,1}} \int_{0}^{T_0} \zeta_\eta(\bar \gamma^{x_n}(s)) ds . 
$$
It then follows, as in the proof of Lemma \ref{lem.An0}, that
$$
\E\left[R_{n,1}'\right]\leq C\eta.   
$$
We now turn to the estimate for $B$.  In the event $A_{n,1}$, there exists a unique $\hat k_n\in \Z^d\cap E_{r_n + R_0', r_{n+1}-R_0}$ with $X_{\hat k_n}=1$ and $|\hat k_n-\bar \gamma^{x_n}(t)|\leq D$ for some  $t\geq 0$, and there is no other $k'\in \Z^d\cap E_{r_n + R_0', r_{n+1}-R_0}\cap B(x_n,K_R)$ such that $X_{k'}=1$. Therefore 
\be\label{feta=}
\zeta_\eta(x)= \zeta(x-\hat k_n) \  {\rm if } \  x\in E_{r_n + R_0'+D, r_{n+1}-R_0-D} \ \text{and} \ |x-x_n|\leq K_R.
\ee
The definition of $\tilde \gamma^{z_n-\hat k_n}$ yields
$$
\begin{array}{l}
\ds \int_{0}^{\tau_{n+1}-\tau_n-T_0} \left(L(\dot{\tilde  \gamma}^{z_n-\hat k_n}(s),\tilde \gamma^{z_n-\hat k_n}(s)) +  \overline H + \zeta(\tilde \gamma^{z_n-\hat k_n}(s)) \right)ds \\
\qquad \ds= \chi_\infty(z_n-\hat k_n)-\chi_\infty(\tilde \gamma^{z_n-\hat k_n}(\tau_{n+1}-\tau_n-T_0))
= \chi_\infty(z_n-\hat k_n)-\chi_\infty(\gamma(\tau_{n+1}-\hat k)).
\end{array}$$
Moreover, since $\hat k_n\in E_{r_n + R_0', r_{n+1}-R_0}$ while $|z_n-x_n|\leq \|D_pH(\cdot, D\chi)\|_\infty T_0$, using the  $R_0'$ in \eqref{defR0prime}, we have 
$$
\lg z_n-\hat k_n , e\rg \leq \lg x_n-\hat k_n, e\rg +  \|D_pH(\cdot, D\chi)\|_\infty T_0 \leq -R_0' +\|D_pH(\cdot, D\chi)\|_\infty T_0\leq -R_0.
$$
It then follows from \eqref{toto2} and the periodicity of $\chi$  that 
$$
 \chi_\infty(z_n-\hat k_n)\leq \chi(z_n-\hat k_n)+c+\ep = \chi(z_n)+c+\ep.
$$
The definition of $\tau_{n+1}$ implies that  $\lg \gamma(\tau_{n+1}),e\rg=r_{n+1}$ and, hence, 
$$
\lg \gamma(\tau_{n+1})-\hat k_n, e\rg \geq r_{n+1}-(r_{n+1}-R_0)= R_0,
$$
and, in view of \eqref{toto1} and the periodicity of $\chi$, 
$$
\chi_\infty(\gamma(\tau_{n+1})-\hat k_n) = \chi(\gamma(\tau_{n+1})-\hat k_n)+c= \chi(x_{n+1})+c.
$$
Collecting the above inequalities  and using the periodicity in space of $L$, we find that, in $A_{n,1}$, 
\be\label{khj;zbasjd}
\begin{array}{l}
\ds \int_{\tau_n+T_0}^{\tau_{n+1}} \left( L(\dot \gamma(s), \gamma(s))+ \zeta(\gamma(s)-\hat k_n)\right)ds  \\[1.5mm]
\ds =   \int_{0}^{\tau_{n+1}-\tau_n-T_0} \left(L(\dot{\tilde  \gamma}^{z_n-\hat k_n}(s),\tilde \gamma^{z_n-\hat k_n}(s)) +  \overline H + \zeta(\tilde \gamma^{z_n-\hat k_n}(s)) \right)ds \\[1.5mm]
\qquad \ds  \leq  \chi(z_n)-\chi(x_{n+1})+ \ep,
\end{array}
\ee
and, hence, 
$$
B\leq  {\bf 1}_{A_{n,1}} \left(\chi(z_n)-\chi(x_{n+1})+\ep\right)+ R_{n,1}'',
$$
where 
$$
R_{n,1}'':={\bf 1}_{A_{n,1}}  \int_{\tau_n+T_0}^{\tau_{n+1}} \left(\zeta_\eta(\gamma (s))- \zeta(\gamma(s)-\hat k_n)\right)ds.
$$
Recalling \eqref{feta=} as well as the fact that $\gamma(t)\in B(x_n, K_R)$ for $t\in [\tau_n,\tau_{n+1}]$, we find
$$
R_{n,1}''={\bf 1}_{A_{n,1}}  \int_{\tau_n+T_0}^{\tau_{n+1}} {\bf 1}_{\gamma \notin E_{r_n + R_0'+D, r_{n+1}-R_0-D}} \zeta_\eta(\gamma(t))dt.
$$
Let $A_{n,1}'\subset A_{n,1}$ be the event that there is at least one bump different from $\hat k_n$, in $(E_{r_n, r_n+R_0'}\cup E_{r_{n+1}-R_0, r_{n+1}+ R_0})\cap B(x_n, K_R)$. Since $\gamma |_{[\tau_n, \tau_{n+1}]}$ belongs to $E_{r_n, r_{n+1}+R_0}\cap B(x_n, K_R)$, we get 
$$
\E\left[R_{n,1}''\right]=\E\left[{\bf 1}_{A_{n,1}'}  \int_{\tau_n+T_0}^{\tau_{n+1}} {\bf 1}_{\gamma \notin E_{r_n + R_0'+D, r_{n+1}-R_0-D}} \zeta_\eta(\gamma(t))dt \right].
$$
It follows from  \eqref{bla1}(ii) that, if $t\geq  CT_0$ with $C:=([(R_0'+D)/R_0]+1)$, then  
$$
\lg \tilde \gamma^{z_n-\hat k_n}(t)+\hat k_n,e\rg \geq \lg z_n,e\rg+ ([(R_0'+D)/R_0]+1)R_0 \geq r_n +R_0'+D.
$$
Moreover, since $\lg \tilde \gamma^{z_n-\hat k_n}(\tau_{n+1}-\tau_n-T_0)+\hat k_n, e\rg =r_{n+1}$, we also find, for $t\leq \tau_{n+1}-CT_0$,  
$$
\begin{array}{rl}
\ds \lg \gamma(t),e\rg \; =  & \ds\lg \tilde \gamma^{z_n-\hat k_n}(t-\tau_n-T_0)+\hat k_n, e\rg \\
\leq & \ds \lg \tilde \gamma^{z_n-\hat k_n}(\tau_{n+1}-\tau_n-T_0)+\hat k_n, e\rg - R_0-D= r_{n+1}-R_0-D,
\end{array}
$$
and, thus $\gamma(t)\in  E_{r_n + R_0'+D, r_{n+1}-R_0-D}$ for $t\in [\tau_n+CT_0, \tau_{n+1}-CT_0]$. 
\smallskip

It follows that 
$$
\begin{array}{rl}
\ds \E\left[R_{n,1}''\right]\; \leq & \ds \E\left[{\bf 1}_{A_{n,1}'}  \int_{\tau_n+T_0}^{\tau_{n}+CT_0} \zeta_\eta(\gamma(t))dt \right]+
\E\left[{\bf 1}_{A_{n,1}'}  \int_{\tau_{n+1}-CT_0}^{\tau_{n+1}} {\bf 1}_{\gamma \notin E_{r_n + R_0', r_{n+1}-R_0}} \zeta_\eta(\gamma(t))dt \right] \\[1.3mm]
\leq & \ds \|\zeta_\eta\|_\infty CT_0 \P\left[ A_{n,1}'\right].
\end{array}
$$
In view of the fact that  in $A_{n,1}'$ there exist  at least two distinct bumps in the set $E_{r_n, r_{n+1}+R_0}\cap B(x_n, K_R)$, we have 
$\ds \E\left[R_{n,1}''\right] \leq C_R \eta^2.$

\smallskip

Writing 
$$
R_{n,1}= R_{n,1}'+R_{n,1}''+\ep{\bf 1}_{A_{n,1}}, 
$$
we obtain 
$$
{\bf 1}_{A_{n,1}} I_n \leq  {\bf 1}_{A_{n,1}} \left( \chi(x_n)-\chi(x_{n+1})\right)+ R_{n,1},
$$
and, in view of  Lemma \ref{lem.PAn1} and the above estimates, 
$$
\E\left[R_{n,1}\right]\leq C\eta+C_R \eta^2+\ep\P\left[A_{n,1}\right]\leq (C+C_1R\ep)\eta+C_R \eta^2. 
$$
\end{proof}

\smallskip

We complete now the proof. 
\begin{proof}[Proof of Theorem \ref{thm:main2}(continued)]  Combining Lemma \ref{lem.An0}, Lemma \ref{lem.An2} and Lemma \ref{lem.An1}, we find
$$
I_n\leq  \chi(x_n)-\chi(x_{n+1})+ R_n
$$
where $\ds R_n:= R_{n,0}+R_{n,1}+R_{n,2}$, and, for all  $n\in \N$,
$$
\E\left[R_n\right]\leq  (C+C_1R\ep)\eta+C_R \eta^2.
$$
Therefore 
$$
\begin{array}{l}
\ds \int_{\tau_0}^{\tau_n} \left(L(\dot\gamma(t), \gamma(t)) +  \overline H+\zeta_\eta(\gamma(t))\right)dt
\; = \; \ds \sum_{k=0}^{n-1} I_k  
\leq  \ds   \chi(x_0)-\chi(x_n)+\sum_{k=0}^{n-1} R_k. 
\end{array}
$$
It follows from  Lemma \ref{lem:limtaun} that 
$$
C_0^{-1} Rn \leq \tau_n\leq C_0Rn,
$$
and, thus, 
$$
\begin{array}{l}
\ds \limsup \E\left[\frac{1}{\tau_n}\int_{\tau_0}^{\tau_n} \left(L(\dot\gamma(t),\gamma(t)) +  \overline H+\zeta_\eta(\gamma(t))\right)ds\right]  \\[1mm]
\qquad \qquad \ds \leq \ds \limsup_n \frac{C_0}{Rn}\left( 2\|\chi\|_\infty+ n(C+C_1R\ep)\eta+C_R \eta^2n \right)\\[1.5mm]
\qquad \qquad \ds \leq \; C_0(CR^{-1}+C_1\ep)\eta+\tilde C_R \eta^2.
\end{array}
$$
Then, in view of \eqref{barHetabound}, we get 
$$
\overline H -\overline H_\eta \leq C_0(CR^{-1}+C_1\ep)\eta+C_R \eta^2, 
$$
and, thus, 
$$
\limsup_{\eta\to 0^+} \frac{\overline H -\overline H_\eta }{\eta} \leq C_0(CR^{-1}+C_1\ep).
$$
Since  $C_0$ and $C_1$ are independent on $\ep$ and  $C_0$, $C_1$ and $C$ are independent on $R$, letting first $R\to+\infty$ and then $\ep\to0$, we conclude that 
$$
\limsup_{\eta\to 0^+} \frac{\overline H -\overline H_\eta}{\eta} \leq 0.
$$
The inequality  $\overline H -\overline H_\eta \geq0$ then completes the proof. 
\end{proof}

\newcommand{\noop}[1]{}

\end{document}